\theoremstyle{plain}
\newtheorem{theorem}{Theorem}[section]
\newtheorem{lemma}[theorem]{Lemma}
\newtheorem{corollary}[theorem]{Corollary}
\newtheorem{proposition}[theorem]{Proposition}
\newtheorem{example}[theorem]{Example}
\numberwithin{equation}{section}
\theoremstyle{definition}
\newtheorem{definition}[theorem]{Definition}
\newtheorem{assumption}[theorem]{Assumption}
\theoremstyle{remark}
\newtheorem{remark}[theorem]{Remark}
\newcommand{\R}{{\mathbb R}}
\newcommand{\Rex}{{\overline{\mathbb R}}}
\newcommand{\N}{{\mathbb N}}
\newcommand{\F}{{\mathbb F}}
\newcommand{\G}{{\mathcal G}}
\newcommand{\thstar}{\mathop{\theta^*}}
\newcommand{\zstar}{\mathop{z^*}}
\newcommand{\xstar}{\mathop{x^*}}
\newcommand{\xminus}{\mathop{x_-}}
\newcommand{\xR}{\mathop{x_R}}
\newcommand{\xplus}{\mathop{x_+}}
\newcommand{\e}{\epsilon}
\newcommand{\Prob}{\mathop{\mathbb{P}}\nolimits}
\newcommand{\E}{\mathop{\mathbb{E}}\nolimits}
\newcommand{\sX}{{\mathcal X}}
\newcommand{\hth}{\hat{\theta}}
\newcommand{\hx}{\hat{x}}
\newcommand{\bart}{\bar{\theta}}
\newcommand{\Keywords}[1]{\par\noindent 
{\small{\em Keywords\/}: #1}}
\def\Ddots{\mathinner{\mkern1mu\raise\p@
\vbox{\kern7\p@\hbox{.}}\mkern2mu
\raise4\p@\hbox{.}\mkern2mu\raise7\p@\hbox{.}\mkern1mu}}
\title{Constructing Time-Homogeneous Generalised Diffusions 
Consistent with Optimal Stopping Values}
\author{David Hobson\thanks{Corresponding author, D.Hobson@warwick.ac.uk} \ 
and Martin Klimmek\thanks{M.Klimmek@warwick.ac.uk} \\
Department of Statistics, University of Warwick}
\date{\today}
\begin{document}
\maketitle

\begin{abstract}
Consider a set of discounted optimal stopping problems for a 
one-parameter family of objective functions and a fixed diffusion 
process, started at a fixed point. A standard problem in stochastic 
control/optimal stopping is to solve for the problem value in this 
setting.

In this article we consider an inverse problem; given the set of problem 
values for a family of objective functions, we aim to recover the 
diffusion. Under a natural assumption on the family of objective 
functions we can characterise existence and uniqueness of a diffusion 
for which the optimal stopping problems have the specified values. The 
solution of the problem relies on techniques from generalised convexity theory. \smallskip

\Keywords{{\it optimal stopping, generalised convexity, generalised 
diffusions, inverse American option problem}}
\end{abstract}

\section{Introduction}
Consider a classical optimal stopping problem in which we are
given a discount parameter, an objective function 
and a time-homogeneous diffusion process started at a fixed point, and
we are asked to maximise the expected discounted payoff. Here the payoff 
is the objective function evaluated at the value of the diffusion at a 
suitably chosen stopping time. We call this problem the 
forward optimal stopping problem, and the expected payoff under the 
optimal stopping rule the (forward) problem value.
 
The set-up can be generalised to a one-parameter family of objective 
functions to give a one-parameter family of problem values. In this 
article we are interested in an associated inverse problem. The inverse 
problem is, given a one-parameter family of objective functions and 
associated optimal values, to recover the underlying diffusion, or 
family of diffusions, for which the family of forward stopping problems 
yield the given values.

The approach of this article is to exploit the structure of the optimal 
control problem and the theory of generalised convexity from convex 
analysis to obtain a duality relation between the Laplace transform of 
the first hitting time and the set of problem values. The Laplace 
transform can then be inverted to give the diffusion process.

The generalised convexity approach sets this article apart from previous 
work on this problem, see \cite{alfonsi3, alfonsi2, hobson}. All these 
papers are set in the realm of mathematical finance where 
the values of the stopping problems can be identified with the 
prices of perpetual American options, and the diffusion process is the 
underlying stock process. In that context, it is a natural question to ask: Given a 
set of perpetual American option prices from the market, parameterised 
by the strike, is it possible to identify a model consistent with all 
those prices simultaneously? In this article we abstract from the 
finance setting and ask a more general question: When can we identify a 
time-homogeneous diffusion for which the values of a parameterised 
family of optimal stopping problems coincide with a pre-specified function 
of the parameter.

Under restrictive smoothness assumptions on the volatility 
coefficients, Alfonsi and Jourdain~\cite{alfonsi3}  
develop a `put-call 
parity' which relates the prices of perpetual American puts (as a 
function of strike) under one model to the prices of perpetual American 
calls (as a function of the initial value of the underlying asset) under 
another model.
This correspondence 
is extended to other payoffs in \cite{alfonsi2}. The result is then 
applied
to solve the inverse problem described above. In both papers the idea is 
to find a coupled pair of free-boundary problems, the solutions of which 
can be used to give a relationship between the pair of model 
volatilities. 

In contrast, in Ekstr\"{o}m and Hobson \cite{hobson} the idea is to 
solve the inverse problem by exploiting a duality between the 
put price 
and the Laplace transform of the first hitting time. This duality gives 
a direct approach to the inverse problem. It is based on a convex 
duality which requires no smoothness on the volatilities or option 
prices. 

In this article we consider a general inverse problem of how 
to recover 
a diffusion which is consistent with a given set of values for a family 
of optimal stopping problems. The solution requires the use of 
generalised, or $u$-convexity (Carlier~\cite{carlier}, 
Villani~\cite{villani}, Rachev and R\"{u}schendorf~\cite{rachev}). 
The log-value function is the $u$-convex dual
of the log-eigenfunction of the generator (and vice-versa) 
and the $u$-subdifferential corresponds to the 
optimal stopping threshold. These simple concepts give a direct 
and probabilistic approach 
to the inverse problem which contrasts with the involved calculations 
in \cite{alfonsi3,alfonsi2} in which pdes play a key role.

A major advantage of the dual approach is that there are no smoothness 
conditions on the value function or on the diffusion. In particular, it 
is convenient to work with generalised diffusions which are specified by 
the speed measure (which may have atoms, and intervals which have zero 
mass).

{\bf Acknowledgement:} DGH would like to thank Nizar Touzi for 
suggesting generalised convexity as an approach for this problem. 

\section{The Forward and the Inverse Problems}
\label{s:tfatip}

Let $\sX$ be a class of diffusion processes, let $\rho$ be a 
discount parameter, and let $\G=\{G(x,\theta); \theta \in \Theta \}$ be a 
family of non-negative 
objective functions, parameterised by a real parameter $\theta$ which 
lies in an interval $\Theta$.
The forward problem, which is standard in optimal stopping, is for a 
given $X \in \sX$, to 
calculate for each $\theta \in \Theta$, the problem value
\begin{equation} \label{eq:forward}
V(\theta) \equiv V_X(\theta) =\sup_\tau \E_0[e^{-\rho \tau} 
G(X_\tau,\theta)] ,
\end{equation}
where the supremum is taken over finite stopping times $\tau$, and 
$\E_0$ denotes the fact that $X_0=0$.
The inverse problem is, given a fixed $\rho$ and the family $\G$, to 
determine whether 
$V \equiv \{ V(\theta) :  \theta \in \Theta \}$ could have arisen as a
solution to the family of 
problems 
(\ref{eq:forward}) and if so, to
characterise those elements $X \in \sX$ which would lead to the value 
function $V$. The inverse problem, which is the main object of our 
analysis, is much less standard than the forward problem, but has 
recently been the subject of 
some studies (\cite{alfonsi3, alfonsi2, hobson}) in the context of 
perpetual American options. In these papers the space of candidate 
diffusions is 
$\sX_{stock}$, where $\sX_{stock}$ is the set of price processes which, 
when 
discounted, are martingales and $G(x,\theta) = (\theta-x)^+$ is the put 
option 
payoff (slightly more general payoffs are considered in 
\cite{alfonsi2}). The aim 
is to find a stochastic model which is 
consistent 
with an observed continuum of perpetual put prices.

In fact it will be convenient in this article to extend the set $\sX$ to 
include the set of generalised diffusions in the sense of It\^{o} and 
McKean~\cite{mckean}. These diffusions are generalised in the sense that 
the speed measure may include atoms, or regions with zero or infinite 
mass. Generalised diffusions can be constructed as time changes of 
Brownian Motion, see Section~\ref{SS:existence} below, and also \cite{mckean}, 
\cite{watanabe}, \cite{rogers}, and 
for a setup related to the one considered here, \cite{hobson}.

We will concentrate on the set of generalised diffusions started and 
reflected at $0$, which are local martingales (at least when away from 
zero). We denote this class $\sX_0$. (Alternatively we can think of 
an element $X$ as the modulus of a local martingale $Y$ whose 
characteristics are symmetric about the initial point zero.) The twin 
reasons for focusing on $\sX_0$ rather than $\sX$, are that the optimal 
stopping problem is guaranteed to become one-sided rather than 
two-sided, and 
that 
within $\sX_0$ there is some hope of finding a unique solution to the 
inverse 
problem. The former reason is more fundamental (we will comment in 
Section~\ref{ss:otherX} below 
on other plausible choices of subsets of $\sX$ for which a similar 
approach is equally fruitful). For $X \in \sX_0$, 0 is a reflecting 
boundary and we assume a natural right boundary but we do not exclude 
the possibility that it is absorbing.
Away from zero the process is in natural scale and can be characterised 
by its speed measure, and in the case of a classical diffusion by the 
diffusion coefficient $\sigma$. In that case we may consider $X \in 
\sX_0$ to be a solution of the SDE (with reflection)
\[ dX_t = \sigma(X_t) dB_t + dL_t \hspace{20mm} X_0=0, \]
where $L$ is the local time at zero.

We return to the (forward) optimal stopping problem: 
For fixed $X$ define $\varphi(x) = \varphi_X(x) = \E_0[e^{- \rho H_x 
}]^{-1}$, where $H_x$ is the first hitting time of level $x$. Let
\begin{equation} \label{hatv}
\hat{V}(\theta)=\sup_{x : \varphi(x)<\infty} \left[ G(x,\theta) 
\E_0[e^{-\rho H_x}] 
\right] 
= \sup_{x : \varphi(x)<\infty} \left[ \frac{G(x,\theta)}{\varphi(x)} 
\right]. 
\end{equation}
Clearly $V \geq \hat{V}$. Indeed, as the following lemma shows, there is 
equality and for the forward problem 
(\ref{eq:forward}), the search over all 
stopping times can be reduced to a search over first hitting times.

\begin{lemma} \label{l:coincide}
$V$ and $\hat{V}$ coincide.
\end{lemma}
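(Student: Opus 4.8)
The plan is to prove the two inequalities $V \geq \hat{V}$ and $V \leq \hat{V}$ separately. The inequality $V \geq \hat{V}$ is essentially immediate and is already asserted in the text: for each fixed level $x$ with $\varphi(x) < \infty$, the first hitting time $H_x$ is a valid (finite) stopping time, and evaluating the objective in (\ref{eq:forward}) at $\tau = H_x$ gives $\E_0[e^{-\rho H_x} G(X_{H_x}, \theta)] = G(x,\theta)\E_0[e^{-\rho H_x}]$, since $X_{H_x} = x$ on the event that $H_x < \infty$. Taking the supremum over such $x$ yields $V(\theta) \geq \hat{V}(\theta)$.

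The substance of the proof is the reverse inequality $V \leq \hat{V}$, which says that restricting to first hitting times loses nothing. First I would fix an arbitrary finite stopping time $\tau$ and show that $\E_0[e^{-\rho \tau} G(X_\tau, \theta)] \leq \hat{V}(\theta)$. The natural route is to establish the pointwise domination $\E_0[e^{-\rho \tau} G(X_\tau,\theta)] \leq \E_0[e^{-\rho H_{X_\tau}} G(X_\tau,\theta)]$ by comparing, for each realized terminal value, the discount factor obtained by stopping at $\tau$ with that obtained by hitting the same level. Concretely, writing $x = X_\tau$, one wants $\E_0[e^{-\rho \tau} \mid X_\tau = x] \leq \E_0[e^{-\rho H_x}]$; that is, among all stopping times that deliver the process to level $x$, the first hitting time $H_x$ maximises the expected discount factor. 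Once this is in hand, on the event $\{X_\tau = x\}$ one bounds $G(x,\theta) \E_0[e^{-\rho \tau}\mid X_\tau = x] \leq G(x,\theta)\E_0[e^{-\rho H_x}] = G(x,\theta)/\varphi(x) \leq \hat{V}(\theta)$, using $G \geq 0$, and then integrates over the law of $X_\tau$.

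The cleanest way to secure the key domination is to exploit the martingale structure of $X \in \sX_0$. Since $X$ is a local martingale away from zero in natural scale, the process $M_t = e^{-\rho(t \wedge H_x)}\varphi(X_{t\wedge H_x})$ (or equivalently $e^{-\rho t}$ weighted by the appropriate $\rho$-harmonic function) is a martingale, so that the $\rho$-excessive function $x \mapsto \E_0[e^{-\rho H_x}]$ controls the discounting at any stopping time. More precisely, I expect to use the strong Markov property together with the fact that $\E_0[e^{-\rho H_x}]$ is, by its very definition as a hitting-time Laplace transform, multiplicative in the level: for levels visited in order, $\E_0[e^{-\rho H_{x}}] = \E_0[e^{-\rho H_{y}}]\,\E_y[e^{-\rho H_{x}}]$ when $0 \le y \le x$. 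This lets me decompose any stopping time that reaches level $x$ and conclude that delaying past the first hitting of $x$ only decreases the expected discount factor.

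The main obstacle will be handling the generality of the diffusion class $\sX_0$: because the speed measure may have atoms, intervals of zero mass, and possibly an absorbing right boundary, the process need not be a genuine diffusion and $\varphi$ need not be continuous or finite everywhere. I would therefore need to argue carefully on the set $\{x : \varphi(x) < \infty\}$, treat the event that $\tau$ lands in a region the process cannot leave, and ensure that the optional-sampling/Markov argument remains valid under mere local-martingale and reflecting-boundary hypotheses. Establishing the domination $\E_0[e^{-\rho \tau} \mid X_\tau = x] \le \E_0[e^{-\rho H_x}]$ rigorously in this generality, rather than the comparison itself, is where the real work lies.
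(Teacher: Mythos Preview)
Your approach has a genuine gap: the key conditional inequality $\E_0[e^{-\rho \tau}\mid X_\tau = x] \leq \E_0[e^{-\rho H_x}]$ is false in general. Take $X$ reflecting Brownian motion, fix a level $x>0$ and a time $T$, and set $\tau = H_x \wedge T$. Then $\{X_\tau = x\} = \{H_x \leq T\}$ and on this event $\tau = H_x$, so $\E_0[e^{-\rho \tau}\mid X_\tau = x] = \E_0[e^{-\rho H_x}\mid H_x \leq T]$. But conditioning on $\{H_x \leq T\}$ biases $H_x$ to be small, hence biases $e^{-\rho H_x}$ to be large, and one checks that $\E_0[e^{-\rho H_x}\mid H_x \leq T] > \E_0[e^{-\rho H_x}]$ for finite $T$. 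So the pathwise bound $H_{X_\tau}\le \tau$ does give $\E_0[e^{-\rho \tau}\mid X_\tau = x]\le \E_0[e^{-\rho H_x}\mid X_\tau = x]$, but the right-hand side cannot be replaced by the unconditional Laplace transform; the strong Markov/multiplicativity argument you sketch does not close this gap, because $H_{X_\tau}$ is not a stopping time.

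The paper's proof avoids conditioning altogether and is a two-line supermartingale argument. From the definition of $\hat{V}$ one has the pointwise bound $\varphi(x) \geq G(x,\theta)/\hat{V}(\theta)$. Since $\varphi$ solves (\ref{eq:differential}), the process $e^{-\rho t}\varphi(X_t)$ is a non-negative local martingale and hence a supermartingale, so optional stopping gives $1 = \varphi(0) \geq \E_0[e^{-\rho\tau}\varphi(X_\tau)] \geq \E_0[e^{-\rho\tau}G(X_\tau,\theta)]/\hat{V}(\theta)$. You actually wrote down the relevant process $e^{-\rho t}\varphi(X_t)$, but used it only as scaffolding for the conditioning route; the point is to apply optional sampling to it directly and then use the pointwise domination $G \leq \hat{V}\varphi$, rather than trying to compare $\tau$ with $H_{X_\tau}$ level by level.
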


\begin{proof} See Appendix.
\end{proof}

The first step in our approach will be to take logarithms which converts 
a multiplicative problem into an additive one.
Introduce the notation
\begin{eqnarray*}
v(\theta) &=& \log(V(\theta)), \\
g(x,\theta) &=& \log(G(x,\theta)), \\
\psi(x) &=& \log(\E_0[e^{-\rho H_x}]^{-1}) = \log \varphi(x).
\end{eqnarray*}
Then the equivalent $\log$-transformed problem (compare 
(\ref{hatv})) is 
\begin{equation}
v(\theta)=\sup_{x}[g(x,\theta)-\psi(x)] ,
\label{eq:logvupsi}
\end{equation}
where the supremum is taken over those $x$ for which $\psi(x)$ is 
finite. To each of these quantities we may attach the superscript $X$ 
if we wish to associate the solution of the forward problem to a 
particular diffusion. For reasons which will become apparent, see 
Equation (\ref{eq:differential}) below, we call $\varphi_X$ the eigenfunction (and 
$\psi_X$ 
the log-eigenfunction) associated with $X$.

In the case where $g(x,\theta)=\theta x$, $v$ and $\psi$ are convex 
duals. More generally the relationship between $v$ and $\psi$ is that
of $u$-convexity (\cite{carlier}, \cite{villani}, \cite{rachev}). 
(In Section~\ref{S:uconvex} we give the 
definition of the $u$-convex dual $f^u$ of a function $f$, and derive 
those properties that we will need.) For our setting, and
under mild regularity assumptions on the functions $g$, see 
Assumption~\ref{ass} below, we will show that there is a 
duality relation between $v$ 
and $\psi$ via the $\log$-payoff function $g$ which can be exploited to 
solve both the forward and inverse problems. 
In particular our main results (see Proposition~\ref{p:forward} and 
Theorems 
\ref{t:existence} and \ref{t:suffexist} for precise statements) 
include: 

\noindent{\bf Forward Problem:}
Given a diffusion $X \in \sX_0$, let $\varphi_X(x) = 
( \E_0[e^{-\rho H_x}])^{-1}$ and
$\psi_X(x)=\log(\varphi_X(x))$. 
Set $\psi^g(\theta) = \sup_x \{ g(x,\theta)-\psi(x) \}$.
Then 
the solution to the forward problem is given by 
$V(\theta)=\exp(\psi^g(\theta))$, at least for those $\theta$ for which 
there is an optimal, finite stopping rule.
We also find that $V$ is locally Lipschitz over the same range of 
$\theta$.

\noindent{\bf Inverse Problem:} For $v=\{ v(\theta): \theta \in \Theta 
=[\theta_-,\theta_+]
\}$ to be logarithm of the solution of (\ref{eq:forward}) for some $X 
\in \sX_0$ it is sufficient   
that the $g$-convex dual (given  by $v^g(x)=\sup_{\theta} \{ 
g(x,\theta)-v(\theta)\}$) satisfies $v^g(0)=0$, 
$e^{v^g(x)}$ is convex and increasing, and $v^g(x) > \{ g(x,\theta_-) - 
g(0,\theta_-)\}$ for all $x>0$.

Note that in stating the result for the inverse problem we have assumed 
that $\Theta$ contains its endpoints, but this is not necessary, and 
our theory will allow for $\Theta$ to be open and/or unbounded at 
either end.

If $X$ is a solution of the inverse problem then we will say that $X$ is 
consistent with $\{ V(\theta); \theta \in \Theta \}$. By abuse of 
notation we will say that $\varphi_X$ (or $\psi_X$) is consistent with 
$V$ (or $v = \log V$) if, when solving the optimal stopping problem
(\ref{eq:forward}) for the diffusion with eigenfunction $\varphi_X$, we 
obtain the problem values $V(\theta)$ for each $\theta \in \Theta$.

The main technique in the proofs of these results is to exploit 
(\ref{eq:logvupsi}) to relate the 
fundamental solution $\varphi$ with $V$. Then there is a second part 
of the problem which is to relate $\varphi$ to an element of $\sX$. In 
the case where we restrict attention to $\sX_0$, each increasing convex 
$\varphi$ with 
$\varphi(0)=1$ is associated with a unique generalised diffusion $X \in 
\sX_0$. Other choices of subclasses of $\sX$ may or may not have this 
uniqueness property. See the discussion in Section~\ref{SS:uniqueness}.

The following examples give an idea of the scope of the problem:

\begin{example} \label{ex:sforward1}
Forward Problem:
Suppose $G(x,\theta)=e^{x\theta}$. Let $m>1$ and suppose that
$X \in \sX_0$ solves $dX = \sigma(X) dW + dL$ for $\sigma(x)^{-2}= 
(x^{2(m-1)}+(m-1)x^{m-2})/(2 \rho)$. For such a diffusion 
$\varphi(x)=\exp(\frac{1}{m} x^m), x \geq 0$. 
Then for $\theta \in \Theta = (0,\infty)$, 
$V(\theta)=\exp(\frac{m-1}{m} 
\theta^{\frac{m}{m-1}})$.
\end{example}

\begin{example}\label{ex:BM1a}
Forward Problem: Let $X$ be reflecting Brownian Motion on the positive 
half-line with a
natural boundary at $\infty$. Then $\varphi(x)=\cosh(x \sqrt{2 \rho})$.
Let $g(x,\theta)=\theta x$ so that $g$-convexity is standard convexity, 
and suppose $\Theta = (0,\infty)$. 
Then
\[v(\theta)=\sup_x[\theta x - \log(\cosh(x \sqrt{2 \rho }))] .\] 
It is
easy to ascertain that the supremum is attained at $x=\xstar(\theta)$ 
where
\begin{equation}
\xstar(\theta) = \frac{1}{\sqrt{2 \rho}} \tanh^{-1} \left( \frac{\theta}{\sqrt{2 \rho}} \right)
\end{equation}
for $\theta \in [0,\sqrt{2 \rho})$. Hence, for $\theta \in (0,\sqrt{2 \rho})$
\begin{eqnarray*}
v(\theta)&=& \frac{\theta}{\sqrt{2 \rho}} \tanh^{-1} 
\left( \frac{\theta}{\sqrt{2 \rho}} \right) - \log \left( \cosh 
\tanh^{-1} \left( \frac{\theta}{\sqrt{2 \rho}} \right) \right) \\
&=& \frac{\theta}{\sqrt{2 \rho}} \tanh^{-1} 
\left( \frac{\theta}{\sqrt{2 \rho}} \right) 
+ \frac{1}{2} \log \left(1-\frac{\theta^2}{2\rho} \right),
\end{eqnarray*}
with limits $v(0)=0$ and $v(\sqrt{2 \rho}) = \log 2$.
For $\theta > \sqrt{2 \rho}$ we have $v(\theta)=\infty$.
\end{example}

\begin{example} Inverse Problem:
Suppose that $g(x,\theta)=\theta x$ and $\Theta = (0, \sqrt{2 \rho})$. 
Suppose also  
that for $\theta \in \Theta$
\[ V(\theta) = \exp \left( \frac{\theta}{\sqrt{2 \rho}} \tanh^{-1} 
\left( \frac{\theta}{\sqrt{2 \rho}} \right) 
+ \frac{1}{2} \log \left(1-\frac{\theta^2}{2\rho} 
\right) \right).
\]
Then $X$ is reflecting Brownian Motion.

Note that $X \in \sX_0$ is uniquely determined, and its diffusion 
coefficient is specified on $\R^+$. In particular, if we expand the 
domain of definition of $\Theta$ to $(0,\infty)$ then for consistency we 
must have $V(\Theta)=\infty$ for $\theta>\sqrt{2 \rho}$.
\end{example}

\begin{example} \label{ex:powers} 
Inverse Problem: Suppose $G(x,\theta)=x^\theta$ and 
$V(\theta)=\{\frac{\theta^{\frac{\theta}{2}} 
(2-\theta)^{\frac{2-\theta}{2}}}{2} : \theta \in (1,2) \}$. Then 
$\varphi(x)=1+x^2$ for $x>1$ and, at least whilst $X_t>1$, $X$ solves 
the SDE $dX = \rho(1+X)^2 dW$. In particular, $V$ does not contain 
enough information to determine a unique consistent diffusion in $\sX_0$ 
since there is some indeterminacy of the diffusion co-efficient on 
$(0,1)$.
\end{example}

\begin{example} \label{ex:concave}
Inverse Problem: Suppose $g(x,\theta)={-\theta^2}/(2\{1+x\})$, $\Theta 
=[1,\infty)$ and 
$v(\theta)=\{-1/2 - \log \theta : \theta \geq 1 \}$. 
Then the $g$-dual of $v$ is given by $v^g(x)=\log ({1+x})/2$, $x 
\geq 0$ and is a candidate for $\psi$. However $e^{v^g(x)} = 
\sqrt{1+x}$ is not convex. 
There is no diffusion in $\sX_0$ consistent with $V$. 
\end{example}

\begin{example}
\label{ex:martin}
Forward and Inverse Problem:
In special cases, the optimal strategy in the forward problem may be to `stop at the 
first hitting time of infinity' or to `wait forever'. Nonetheless, it is possible to solve the forward and 
inverse problems.

Let $h$ be an increasing, differentiable 
function on $[0,\infty)$ with $h(0)=1$,
such that $e^h$ is convex;
let $f$ be a positive, increasing, differentiable function on $[0,\infty)$ 
such that $\lim_{x \rightarrow \infty} f(x) = 1$; and let $w(\theta)$ 
be 
a non-negative, increasing and differentiable function on $\Theta=[\theta_-,\theta_+]$ with $w(\theta_-)=0$.

Suppose that 
\[g(x,\theta)=h(x)+f(x)w(\theta).\]
Note that the cross-derivative $g_{x \theta}(x,\theta)=f'(x)w'(\theta)$ is non-negative.

Consider the forward problem. Suppose we are given a diffusion in 
$\sX_0$ 
with log-eigenfunction $\psi=h$. Then the log-problem value $v$ is 
given by 
\[ v(\theta) = \psi^g(\theta)=\sup_{x \geq 0} \{g(x,\theta)-\psi(x)\} = 
\limsup_{x 
\rightarrow \infty} \{f(x) w(\theta)\}= w(\theta).\]

Conversely, suppose we are given the value function $V=e^w$ on
$\Theta$. Then
\[ w^g(x)=\sup_{\theta \in \Theta} \{g(x,\theta)-w(\theta)\} = 
\sup_{\theta \in \Theta} \{h(x) + (f(x)-1)w(\theta)\} =
h(x) \]
is the log-eigenfunction of a diffusion $X \in \sX_0$ which solves the 
inverse problem.
\end{example}

A generalised diffusion
$X \in \sX_0$ can be identified by its speed measure $m$. Let $m$ be a
non-negative,
non-decreasing and right-continuous function which defines a measure on
$\R^+$, and let $m$ be identically zero on $\R^-$. We call $x$ a point
of growth of $m$ if $m(x_1)<m(x_2)$ whenever
$x_1 < x < x_2$ and denote the closed set of points of growth by $E$.
Then $m$ may assign mass
to 0 or not, but in either case we assume $0 \in E$.
We also assume that if $\xi=\sup \{x : x \in E \}$
then $\xi+m(\xi+) = \infty$. If $\xi < \infty$ then either $\xi$ is an 
absorbing endpoint, or $X$ does not reach 
$\xi$ in finite time.

The diffusion $X$ with speed measure $m$ is defined on $[0,\xi)$ and is 
constructed via a time-change of Brownian motion as follows. 

Let $\F^B=({\mathcal F}_u^B)_{u\geq 0}$ be a filtration supporting
a Brownian Motion $B$ started  at $0$ with a local time process
$\{ L_u^z ; u \geq 0, z \in \R \}$. Define $\Gamma$ to be the
left-continuous,
increasing, additive functional
\[\Gamma_u = \int_{\R} L_u^z m(dz),\]
and define its right-continuous inverse by
\[A_t = \inf \{u : \Gamma_u > t \}. \]
If we set $X_t = B(A_t)$ then $X_t$ is a generalised diffusion which is
a local martingale away from 0, and which is absorbed the first time
that $B$ hits $\xi$.

For a given diffusion $X \in \sX_0$ recall that $\varphi(x) \equiv 
\varphi_X(x)$
is defined via $\varphi_X(x) = ( \E_0[e^{-\rho H_x}])^{-1}$. It is well 
known (see for example
\cite[V.50]{rogers} and \cite[pp 147-152]{dym}) that $\varphi_X$ is the 
unique
increasing, convex solution to the differential equation
\begin{equation} \label{eq:differential}
\frac{1}{2} \frac{d^2 f}{dm dx} = \rho f; \hspace{20mm} f(0)=1, 
\hspace{10mm} f'(0-)=0.
\end{equation}
Conversely, given an increasing convex function $\varphi$ with
$\varphi(0)=1$ and $\varphi'(0+) \geq 0$, (\ref{eq:differential}) can be 
used to define a measure $m$ which in turn is the speed measure of a 
generalised diffusion $X \in \sX_0$.

If $m(\{x\}) > 0$ then the process $X$ spends a positive amount of time 
at $x$. If $x \in E$ is an isolated point, then there is a positive 
holding time at $x$, conversely, if for each neighbourhood $N_x$ of $x$, 
$m$ also assigns positive mass to $N_x \setminus \{ x \}$, then $x$ is a 
sticky point.

If $X \in \sX_0$ and $m$ has a density, then 
$m(dx)=\sigma(x)^{-2}dx$ where $\sigma$ is the diffusion 
coefficient of $X$ and the differential equation 
(\ref{eq:differential}) becomes
\begin{equation}\label{eq:differentialnice}
\frac{1}{2} \sigma(x)^2 f''(x) - \rho f(x) = 0.
\end{equation}
In this case, depending on the smoothness of $g$, 
$v$ will also inherit smoothness properties. Conversely, `nice' $v$ will 
be associated with processes solving (\ref{eq:differentialnice}) for a smooth 
$\sigma$. However, rather than pursuing issues of regularity, we prefer 
to work with generalised diffusions. 

\section{u-convex Analysis}
\label{S:uconvex}

In the following we will consider $u$-convex functions for $u=u(y,z)$ a  
function of 
two variables $y$ and $z$. There will be complete symmetry in role 
between $y$ and $z$ so that although we will discuss $u$-convexity for 
functions of $y$, the same ideas apply immediately to $u$-convexity 
in the variable $z$. Then, in the sequel we will apply these results for 
the function $g$, and we will apply them for $g$-convex functions of 
both $x$ and $\theta$.

For a more detailed development of $u$-convexity, see 
\cite{rachev}, \cite{villani}, \cite{carlier} and the references 
therein. Proofs of the results below are included in the Appendix.

Let $D_y$ and $D_z$ be sub-intervals of $\R$. We suppose that $u : D_y \times
D_z \mapsto \bar{\R}$ is well defined, though possibly infinite valued.

\begin{definition} \label{def:uconvex1} $f: D_y \rightarrow \R^+$ is 
$u$-convex iff there exists a non-empty $S \subset D_z \times \R$ 
such that for all $y \in D_y$
\[f(y)=\sup_{(z,a)\in S} [u(y,z)+a] . \]
\end{definition}

\begin{definition}
The $u$-dual of $f$ is the $u$-convex function on $D_z$ given by 
\[ f^u(z)=\sup_{y \in D_y} [u(y,z)-f(y)] . \]
\end{definition}

A fundamental fact from the theory of $u$-convexity is the following:
\begin{lemma} \label{def:uconvex2}
A function $f$ is $u$-convex iff $(f^u)^u=f$. 
\end{lemma}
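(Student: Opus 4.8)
The plan is to mirror the proof of the classical Fenchel--Moreau biconjugation theorem, adapted to the generalised kernel $u$. The whole statement rests on one universally valid inequality together with a short representation argument. First I would record the \emph{Fenchel--Young inequality}: for every $y \in D_y$ and $z \in D_z$, the definition of $f^u$ gives $f^u(z) \ge u(y,z) - f(y)$, hence $f(y) \ge u(y,z) - f^u(z)$; taking the supremum over $z$ yields $(f^u)^u(y) = \sup_{z \in D_z}[u(y,z) - f^u(z)] \le f(y)$ for \emph{every} $f$, $u$-convex or not. This settles one of the two inequalities needed for $(f^u)^u = f$ and uses no hypothesis on $f$.

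For the forward implication (if $f$ is $u$-convex then $(f^u)^u = f$) I then need only the reverse inequality $(f^u)^u \ge f$. Here I would invoke the representation from Definition~\ref{def:uconvex1}: there is a non-empty $S \subset D_z \times \R$ with $f(y) = \sup_{(z,a) \in S}[u(y,z) + a]$. The key observation is that each admissible pair $(z_0,a_0) \in S$ forces $u(y,z_0) + a_0 \le f(y)$ for all $y$, equivalently $a_0 \le \inf_y [f(y) - u(y,z_0)] = -f^u(z_0)$. Substituting this bound gives $u(y,z_0) + a_0 \le u(y,z_0) - f^u(z_0) \le (f^u)^u(y)$, and taking the supremum over $(z_0,a_0) \in S$ produces $f(y) \le (f^u)^u(y)$. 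Combined with the Fenchel--Young inequality this gives the desired equality.

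The converse (if $(f^u)^u = f$ then $f$ is $u$-convex) is then almost immediate, and this is where I would check the fine print. Writing out $(f^u)^u(y) = \sup_{z \in D_z}[u(y,z) - f^u(z)]$, the assumed equality $(f^u)^u = f$ exhibits $f$ precisely in the supremum form of Definition~\ref{def:uconvex1}, with index set $S = \{(z, -f^u(z)) : z \in D_z\}$ and offsets $a = -f^u(z)$. The main obstacle --- and the only place where care is really required --- is the bookkeeping of infinite values, so that $S$ genuinely meets the requirement $S \subset D_z \times \R$ with $S \ne \emptyset$. One must discard those $z$ with $f^u(z) = +\infty$ (they contribute $-\infty$ to the supremum and so may be dropped harmlessly), and argue that the surviving set is non-empty with \emph{finite} offsets. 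Non-emptiness follows because $f = (f^u)^u$ is assumed finite and $\R^+$-valued, so the supremum defining $(f^u)^u(y)$ is finite and hence approached along $z$ with $f^u(z) < +\infty$; finiteness from below holds because $f^u(z) = -\infty$ can occur only on the degenerate slices where $u(\cdot, z) \equiv -\infty$, on which the contribution $u(y,z) - f^u(z)$ is $-\infty$ and may again be excluded. Removing these slices leaves a legitimate non-empty $S$ with real offsets and completes the argument.
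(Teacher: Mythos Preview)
Your proof is correct and uses essentially the same ideas as the paper's: the Fenchel--Young inequality for $(f^u)^u \le f$, then bounding each affine-in-$u$ piece $u(\cdot,z)+a$ of a $u$-convex $f$ above by $(f^u)^u$ via $a \le -f^u(z)$. The only difference is organisational: the paper first packages these steps into the auxiliary statement that $(f^u)^u$ is the greatest $u$-convex minorant of $f$ (using the slightly less direct route $l \le f \Rightarrow (l^u)^u \le (f^u)^u$ together with $(l^u)^u = l$ for $l(y)=u(y,z)+a$) and then reads off Lemma~\ref{def:uconvex2} as a two-line corollary, whereas you prove the biconjugation identity directly.
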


The function $(f^u)^u$ (the $u$-convexification of $f$) is the greatest 
$u$-convex 
minorant of $f$ (see the Appendix). 
The condition $(f^u)^u=f$ provides an alternative
definition of a $u$-convex function, and is often 
preferred; checking whether $(f^u)^u=f$ is 
usually more natural than trying to identify the set $S$.

Diagrammatically (see Figure 1.), we can think of 
$-(f^u)(z)=\inf_y[f(y)-u(y,z)]$ 
as the vertical distance between $f$ and $u(.,z)$. Thus 
$f^u(z) \leq 0$ when $f(y) \geq u(y,z)$ for all $y \in D_y$. 

The following description due to Villani \cite{villani} is helpful in 
visualising what is going on: $f$ is $u$-convex if at every point $y$ we 
can find a parameter $z$ so that we can {\it caress} $f$ from below with 
$u(.,z)$.

The definition of the $u$-dual implies a generalised version of the 
Young inequality (familiar from convex analysis, e.g 
\cite{rockafellar}),
\[f(y)+f^u(z) \geq u(y,z)\] 
for all $(y,z) \in D_y \times D_z$. Equality holds at pairs $(y,z)$ where the supremum 
\[\sup_z [u(y,z)-f^u(z)]\] 
is achieved.

\begin{definition}\label{def:subdifferential}
The $u$-subdifferential of $f$ at $y$ is defined by
\[\partial^u f(y) = \{ z \in D_z : f(y) + f^u(z) = u(y,z) \}, \]
or equivalently
\[\partial^u f(y) = \{ z \in D_z : u(y,z)- f(y) \geq u(\hat{y},z) - 
f(\hat{y}), \forall \hat{y} \in D_y \} . \]
\end{definition}

If $U$ is a subset of $D_y$ then we define $\partial^u f(U)$ to be 
the union of $u$-subdifferentials of $f$ over all points in $U$.

\begin{definition}
$f$ is $u$-subdifferentiable at $y$ if $\partial^u f(y) \neq 
\emptyset$. $f$ is $u$-subdifferentiable on $U$ if it is 
$u$-subdifferentiable 
for all $y \in U$, and $f$ is $u$-subdifferentiable if it is
$u$-subdifferentiable
on $U=D_y$.
\end{definition}

\begin{figure}[t]\label{Fig.1}
\begin{center}
\includegraphics[height=8cm,width=9cm]{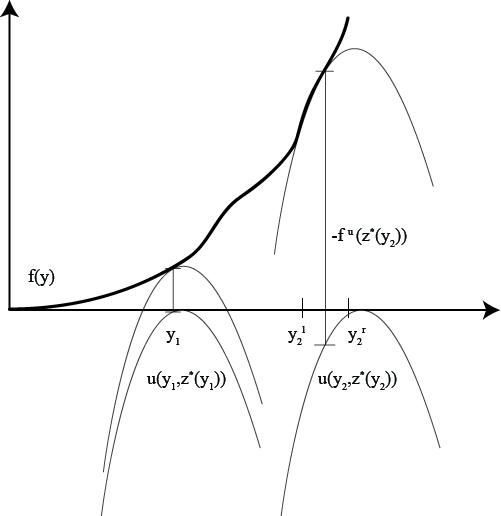}
\caption{$f$ is $u$-subdifferentiable. $\partial^u 
f(y_1)=\zstar(y_1)$ and $\partial^u (y_2)=\zstar(y_2)$ for $y_2 \in 
(y_2^l,y_2^r)$. The distance between $u(.,z)$ and $f$ is equal to 
-$f^u(z)$. Note that the $u$-subdifferential is constant over the 
interval $(y_2^l,y_2^r)$}
\end{center}
\end{figure}

In what follows it will be assumed that the function $u(y,z)$ is
satisfies the following `regularity
conditions'.

\begin{assumption} \label{ass}
\begin{enumerate}
\item[(a)] $u(y,z)$ is continuously twice differentiable.
\item[(b)] $u_y(y,z) =\frac{\partial}{\partial y} u(y,z)$ as a function 
of $z$, 
and $u_z (y,z) =\frac{\partial}{\partial z} u(y,z)$ as a function of 
$y$, are strictly increasing.
\end{enumerate}
\end{assumption}

\begin{remark}
We will see below that by assuming \ref{ass}(a) 
irregularities
in the value function (\ref{eq:forward}) can be identified with extremal
behaviour of the diffusion. 
\end{remark}

\begin{remark}
Condition \ref{ass}(b) is known as the single crossing property and as 
the
Spence-Mirrlees condition (\cite{carlier}). If instead we have
the `Reverse Spence-Mirrlees condition': \\
\hspace{10mm} (bb)  $u_y(y,z)$ as a function of 
$z$,
and $u_z (y,z)$ as a function of $y$, are strictly decreasing, \\ then 
there 
is a parallel theory, see Remark~\ref{RSM}. 
\end{remark}

The following results from $u$-convex analysis will be fundamental in 
our application of $u$-convex analysis to finding the solutions of the 
forward and inverse problems.

\begin{lemma} \label{lem:monsubdiff}
Suppose $f$ is $u$-subdifferentiable, and $u$ satisfies 
Assumption~\ref{ass}.
Then $\partial^u f$ is monotone in the following 
sense: \\
Let  $y,\hat{y} \in D_y$, $\hat{y} > y$.
Suppose $\hat{z} \in \partial^u
f(\hat{y})$ and $z \in
\partial^u f(y)$.
Then $\hat{z} \geq z$.
\end{lemma}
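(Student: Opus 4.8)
The plan is to reduce this to the classical principle that a single-crossing (Spence--Mirrlees) condition forces monotone comparative statics, the mechanism being simply to add together the two inequalities that define membership in the $u$-subdifferential at the two points. First I would record what the hypotheses say using the second (variational) characterisation in Definition~\ref{def:subdifferential}. Since $z \in \partial^u f(y)$, testing the defining inequality against $\hat y$ gives
\[ u(y,z) - f(y) \geq u(\hat{y},z) - f(\hat{y}), \]
and since $\hat z \in \partial^u f(\hat y)$, testing against $y$ gives
\[ u(\hat{y},\hat{z}) - f(\hat{y}) \geq u(y,\hat{z}) - f(y). \]

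Next I would add these two inequalities. The terms $f(y)$ and $f(\hat y)$ cancel from both sides, leaving the supermodularity-type relation
\[ u(\hat{y},\hat{z}) - u(y,\hat{z}) \geq u(\hat{y},z) - u(y,z). \]
Introducing the auxiliary function $\Psi(w) = u(\hat{y},w) - u(y,w)$, this is exactly the statement $\Psi(\hat{z}) \geq \Psi(z)$. The whole argument now turns on showing that $\Psi$ is strictly increasing, for then the order relation $\Psi(\hat z)\geq\Psi(z)$ transfers directly to $\hat z \geq z$.

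To see that $\Psi$ is strictly increasing, I would invoke the regularity hypotheses. By Assumption~\ref{ass}(a) the function $u$ is $C^2$, so $\Psi$ is differentiable with $\Psi'(w) = u_z(\hat{y},w) - u_z(y,w)$; and by Assumption~\ref{ass}(b), $u_z(\cdot,w)$ is strictly increasing in its first argument, so $\hat{y} > y$ forces $\Psi'(w) > 0$ for every $w$. Hence $\Psi$ is strictly increasing, and combining this with $\Psi(\hat{z}) \geq \Psi(z)$ gives $\hat{z} \geq z$, which is the claim. There is essentially no obstacle beyond spotting the addition step: the entire content of the lemma is that the Spence--Mirrlees condition in Assumption~\ref{ass}(b) is precisely what is needed to convert the pooled cross-inequality into an ordering of the subgradients. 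The only point meriting a line of care is the bookkeeping role of Assumption~\ref{ass}(a), which is used merely to differentiate $u$ and thereby read off strict monotonicity of $\Psi$ from~\ref{ass}(b); equivalently one may write $\Psi(\hat z)-\Psi(z) = \int_{z}^{\hat z} \bigl[u_z(\hat y,s) - u_z(y,s)\bigr]\,ds$ and note the integrand is strictly positive, so the ordering of $z$ and $\hat z$ is forced by the sign of the integral.
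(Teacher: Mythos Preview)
Your proof is correct and follows essentially the same approach as the paper: write down the two variational inequalities from Definition~\ref{def:subdifferential}, add them to eliminate $f$, and invoke the Spence--Mirrlees condition. The only cosmetic difference is that the paper rewrites the resulting cross-inequality as $\int_y^{\hat y}[u_y(v,z)-u_y(v,\hat z)]\,dv\le 0$ and appeals to the first half of Assumption~\ref{ass}(b) ($u_y$ strictly increasing in $z$), whereas you package it as $\Psi(\hat z)\ge\Psi(z)$ with $\Psi(w)=u(\hat y,w)-u(y,w)$ and use the second half ($u_z$ strictly increasing in $y$); under Assumption~\ref{ass}(a) these are equivalent formulations of the same supermodularity argument.
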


\begin{definition} \label{d:310}
We say that a function is strictly $u$-convex, when its
$u$-subdifferential is strictly monotone.
\end{definition}

\begin{proposition} \label{p:prop1} 
Suppose that $u$ satisfies Assumption~\ref{ass}.

Suppose $f$ is a.e differentiable and 
$u$-subdifferentiable. Then there exists a map $\zstar: D_y 
\rightarrow D_z$ such that if $f$ is differentiable at $y$ then
$f(y)=u(y,\zstar(y))-f^u(\zstar(y))$ and
\begin{equation} \label{eq:diffsubdiff}
f '(y) = u_y(y,\zstar(y)).
\end{equation}

Moreover, 
$\zstar$ is such 
that  
$\zstar(y)$ is non-decreasing.

Conversely, suppose that $f$ is a.e 
differentiable and equal to the integral of its derivative. If 
(\ref{eq:diffsubdiff}) holds for a non-decreasing function
$\zstar(y)$, then $f$ is $u$-convex and 
$u$-subdifferentiable with $f(y)=u(y,\zstar(y))-f^u(\zstar(y))$.
\end{proposition}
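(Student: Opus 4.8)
The plan is to handle the two directions separately, drawing the forward claim almost entirely from the monotonicity already proved in Lemma~\ref{lem:monsubdiff} and the characterisations of the $u$-subdifferential in Definition~\ref{def:subdifferential}, and the converse from an integration-of-the-derivative argument that converts the pointwise data $f'(y)=u_y(y,\zstar(y))$ into the defining maximisation property of the subdifferential.

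For the forward direction, since $f$ is $u$-subdifferentiable we have $\partial^u f(y)\neq\emptyset$ for every $y\in D_y$, so I would define $\zstar(y)$ to be a selection from $\partial^u f(y)$ (for definiteness, its smallest element). Membership $\zstar(y)\in\partial^u f(y)$ is, by Definition~\ref{def:subdifferential}, precisely the equality $f(y)+f^u(\zstar(y))=u(y,\zstar(y))$, which yields the first displayed identity at every $y$. Lemma~\ref{lem:monsubdiff} states that the set-valued map $y\mapsto\partial^u f(y)$ is monotone, so any such selection is automatically non-decreasing. For the derivative identity I would use the second form of Definition~\ref{def:subdifferential}: $\zstar(y)\in\partial^u f(y)$ says exactly that $\hat{y}\mapsto u(\hat{y},\zstar(y))-f(\hat{y})$ attains its maximum over $D_y$ at $\hat{y}=y$. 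When $f$ is differentiable at an interior $y$, the stationarity of this map together with Assumption~\ref{ass}(a) (so $u$ is $C^2$) gives $u_y(y,\zstar(y))-f'(y)=0$, which is (\ref{eq:diffsubdiff}).

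For the converse, suppose $f$ equals the integral of its derivative and $f'(y)=u_y(y,\zstar(y))$ a.e.\ for a non-decreasing $\zstar$. Fix $y$ and set $\Phi(\hat{y})=u(\hat{y},\zstar(y))-f(\hat{y})$; the aim is to show $\Phi$ is maximised at $\hat{y}=y$, which by Definition~\ref{def:subdifferential} is equivalent to $\zstar(y)\in\partial^u f(y)$. Since $u(\cdot,\zstar(y))$ is $C^1$ and $f$ is absolutely continuous, $\Phi$ is absolutely continuous with
\[
\Phi'(s)=u_y(s,\zstar(y))-f'(s)=u_y(s,\zstar(y))-u_y(s,\zstar(s))\quad\text{a.e.}
\]
By Assumption~\ref{ass}(b), $u_y(s,\cdot)$ is strictly increasing, so for $s>y$ monotonicity of $\zstar$ gives $\zstar(s)\ge\zstar(y)$ and hence $\Phi'(s)\le0$, while for $s<y$ we get $\zstar(s)\le\zstar(y)$ and hence $\Phi'(s)\ge0$. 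Integrating $\Phi'$ then yields $\Phi(\hat{y})\le\Phi(y)$ for all $\hat{y}$, so $\zstar(y)\in\partial^u f(y)$, and the equality case of the Young inequality gives $f(y)=u(y,\zstar(y))-f^u(\zstar(y))$. Finally this identity gives $(f^u)^u(y)=\sup_z[u(y,z)-f^u(z)]\ge f(y)$, which combined with the general minorant inequality $(f^u)^u\le f$ forces $(f^u)^u=f$; by Lemma~\ref{def:uconvex2}, $f$ is $u$-convex.

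The hard part will be the integration step in the converse: the identity $f'(s)=u_y(s,\zstar(s))$ holds only almost everywhere and $\zstar$ may jump, so the sign comparison of $\Phi'(s)$ with zero cannot be applied pointwise at $s=y$ but must be integrated. This is exactly where the hypothesis that $f$ equals the integral of its derivative is essential — it licenses the fundamental theorem of calculus for $\Phi$ and upgrades the a.e.\ sign information on $\Phi'$ into the global inequality $\Phi(\hat{y})\le\Phi(y)$. A secondary issue to dispatch is the forward direction at endpoints of $D_y$, where $f$ is only one-sidedly differentiable and the stationarity condition must be read as the corresponding one-sided inequality rather than an equality.
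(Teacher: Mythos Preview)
Your proposal is correct and follows essentially the same route as the paper: the forward direction derives $f'(y)=u_y(y,\zstar(y))$ from the first-order condition at the maximiser (the paper does this via the two one-sided inequalities $f(y+h)-f(y)\ge u(y+h,z)-u(y,z)$, which is the same computation), and the converse uses the identical integration argument---the inequality $\Phi(\hat y)\le\Phi(y)$ is exactly the paper's $f(y)\ge f(v)+u(y,\zstar(v))-u(v,\zstar(v))$---with the only cosmetic difference that you conclude $u$-convexity via $(f^u)^u=f$ and Lemma~\ref{def:uconvex2} whereas the paper packages the same inequality as $f=\zeta$ for an explicitly $u$-convex $\zeta$.
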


Note that the subdifferential $\partial^u f(y)$ may be an interval in 
which case $z^*(y)$ may be taken to be any element in that interval.
Under Assumption~\ref{ass}, $\zstar(y)$ is non-decreasing

We observe that since $u(y,\zstar(y))=f(y)+f^u(\zstar(y))$ we have 
$u(y^*(z),z)=f(y^*(z))+f(z)$ and $y^*(z) \in \partial^u f^u(z)$ so 
that $y^*$ may be defined directly as an element of $\partial^u f^u$.
If $\zstar$ is strictly increasing then $y^*$ is just the inverse of $\zstar$.

\begin{remark}\label{RSM}
If $u$ satisfies the `Reverse Spence-Mirrlees' condition, the conclusion 
of 
Lemma~\ref{lem:monsubdiff} is unchanged except that now `$z \geq 
\hat{z}$'.
Similarly, Proposition~\ref{p:prop1} remains true, except that 
$z^*(y)$ and $y^*(z)$ are non-increasing.
\end{remark}

\begin{proposition} \label{p:prop2}
Suppose that $u$ satisfies Assumption~\ref{ass}.

Suppose $f$ is 
$u$-subdifferentiable in a neighbourhood of $y$. Then 
$f$ is continuously differentiable at $y$ 
if and only if $z^*$ is continuous at $y$.
\end{proposition}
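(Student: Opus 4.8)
The plan is to reduce the proposition to a pair of formulas for the one-sided derivatives of $f$, namely
\[ f'_+(y) = u_y(y,\zstar(y+)), \qquad f'_-(y) = u_y(y,\zstar(y-)), \]
where $\zstar(y\pm)=\lim_{\tilde y \to y\pm}\zstar(\tilde y)$ are the one-sided limits of the non-decreasing selection $\zstar$ furnished by Proposition~\ref{p:prop1}. Once these are in hand the proposition is essentially immediate. By strict monotonicity of $u_y(y,\cdot)$ (Assumption~\ref{ass}(b)) we have $f'_+(y)=f'_-(y)$ if and only if $\zstar(y+)=\zstar(y-)$, and since $\zstar$ is monotone the latter is exactly the statement that $\zstar$ is continuous at $y$; thus $f$ is differentiable at $y$ precisely when $\zstar$ is continuous at $y$. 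The upgrade to $C^1$ (continuity of the derivative) will then follow from the same formulas together with continuity of $u_y$.

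First I would record two preliminaries. Since $f$ is $u$-subdifferentiable on the neighbourhood it equals its $u$-convexification there (Lemma~\ref{def:uconvex2}), so the Young inequality $f(\tilde y)\geq u(\tilde y,z)-f^u(z)$ holds for every $z$, with equality whenever $z\in\partial^u f(\tilde y)$. Combining the equality $f(\tilde y)=u(\tilde y,\zstar(\tilde y))-f^u(\zstar(\tilde y))$ with Young's inequality at a second point (and swapping the roles of the two points) bounds $|f(\tilde y)-f(y)|$ by a multiple of $|\tilde y - y|$, so $f$ is locally Lipschitz and in particular continuous. Using continuity of $f$ and $u$ I would then show that both one-sided limits $\zstar(y+)$ and $\zstar(y-)$ lie in $\partial^u f(y)$: passing to the limit $\tilde y \to y\pm$ in the defining inequality $u(\tilde y,\zstar(\tilde y))-f(\tilde y)\geq u(\hat y,\zstar(\tilde y))-f(\hat y)$ and using $\zstar(\tilde y)\to\zstar(y\pm)$ returns the subdifferential inequality at $y$ for the limiting point.

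Next I would establish the one-sided derivative formulas. For the upper bound on the right difference quotient, combine the equality $f(\tilde y)=u(\tilde y,\zstar(\tilde y))-f^u(\zstar(\tilde y))$ with Young's inequality at $y$ to get $f(\tilde y)-f(y)\leq u(\tilde y,\zstar(\tilde y))-u(y,\zstar(\tilde y))$, apply the mean value theorem in the first argument, let $\tilde y\downarrow y$, and use $\zstar(\tilde y)\to\zstar(y+)$ with continuity of $u_y$ to obtain $\limsup \leq u_y(y,\zstar(y+))$. For the matching lower bound, test Young's inequality at the fixed point $z=\zstar(y+)$, which by the previous step lies in $\partial^u f(y)$ so that equality holds at $y$; this gives $f(\tilde y)-f(y)\geq u(\tilde y,\zstar(y+))-u(y,\zstar(y+))$ and hence $\liminf\geq u_y(y,\zstar(y+))$. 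The left derivative is handled symmetrically with $\zstar(y-)$. For genuine $C^1$ continuity I would observe that monotonicity of $\zstar$ forces the one-sided limits $\zstar(\tilde y\pm)$ at nearby points to converge to $\zstar(y)$ whenever $\zstar$ is continuous at $y$, so $f'_\pm(\tilde y)=u_y(\tilde y,\zstar(\tilde y\pm))\to u_y(y,\zstar(y))=f'(y)$ by continuity of $u_y$; conversely, continuity of $f'$ at $y$ forces $f'_-(y)=f'_+(y)$, which returns continuity of $\zstar$ as above.

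The main obstacle is the lower bound in the derivative formulas. A naive lower bound, testing at $z=\zstar(y)$ rather than at the one-sided limit, only yields $u_y(y,\zstar(y))$ and fails to match the upper bound exactly when $\zstar$ jumps; the crucial point is to test Young's inequality at the one-sided limit $\zstar(y+)$ and to have first shown that this limit belongs to $\partial^u f(y)$, which is where continuity of $f$ and of $u$ enter. Everything else reduces to the mean value theorem and the strict monotonicity built into Assumption~\ref{ass}.
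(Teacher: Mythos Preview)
Your proposal is correct and follows essentially the same strategy as the paper: bound the difference quotients from above and below using the Young inequality, taking equality once at $y$ and once at the perturbed point, and then invoke the strict monotonicity of $u_y(y,\cdot)$ to match differentiability of $f$ with continuity of $\zstar$. Your version is slightly more systematic in first proving the exact one-sided formulas $f'_\pm(y)=u_y(y,\zstar(y\pm))$ (via the auxiliary step $\zstar(y\pm)\in\partial^u f(y)$) and then reading off both directions at once, whereas the paper handles the two implications separately and a bit more tersely, but the underlying argument is the same.
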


\section{Application of $u$-convex analysis to the Forward Problems}

Now we return to the context of the family of optimal control problems
(\ref{eq:forward}) and the representation (\ref{eq:logvupsi}).

\begin{lemma} \label{l:lipschitzphi} 
Let $X \in \sX_0$ be a diffusion in natural scale
reflected at the origin with a finite or infinite right boundary point
$\xi$. Then the increasing $log$-eigenfunction of the generator 
\[\psi_X(x) = -\log(\E[e^{- \rho H_x}]^{-1})\] 
is locally Lipschitz continuous on $(0,\xi)$. 
\end{lemma}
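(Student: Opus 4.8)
The plan is to read off both the continuity and the Lipschitz bound directly from the \emph{convexity} of the eigenfunction $\varphi_X$, so that no probabilistic estimates on the law of $H_x$ are needed. First I would recall the facts already assembled before (\ref{eq:differential}): for $X\in\sX_0$ the eigenfunction $\varphi_X(x)=(\E_0[e^{-\rho H_x}])^{-1}$ is the unique \emph{increasing, convex} solution of the differential equation (\ref{eq:differential}) on $[0,\xi)$, with $\varphi_X(0)=1$, and in particular it is finite at every $x<\xi$. Since $\varphi_X$ is increasing with $\varphi_X(0)=1$, we have $\varphi_X\geq 1$ on $[0,\xi)$, so $\varphi_X$ is bounded away from $0$. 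The quantity in the statement is $\pm\log\varphi_X$ (the displayed sign and the convention in (\ref{hatv}) differ by a sign, which is immaterial for a Lipschitz property), so it suffices to show that $\log\varphi_X$ is locally Lipschitz on $(0,\xi)$.

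The analytic core is the classical fact that a finite convex function on an open interval is locally Lipschitz there. Concretely, fix a compact $[a,b]\subset(0,\xi)$; since $b<\xi$ the right-derivative $\varphi_X'(b+)$ is finite, and for $x,y\in[a,b]$ the monotonicity of difference quotients of a convex function squeezes $|\varphi_X(x)-\varphi_X(y)|\leq K\,|x-y|$ with $K=\varphi_X'(b+)<\infty$. Thus $\varphi_X$ is locally Lipschitz on $(0,\xi)$. To transfer this to $\psi_X=\log\varphi_X$ I would compose with the logarithm: on $[a,b]$ the image of $\varphi_X$ lies in $[1,M]$ with $M=\varphi_X(b)$, and on $[1,M]$ the map $t\mapsto\log t$ has derivative $1/t\leq 1$, hence is $1$-Lipschitz. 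Composing the two bounds gives, for all $x,y\in[a,b]$,
\[ |\psi_X(x)-\psi_X(y)|\leq |\varphi_X(x)-\varphi_X(y)|\leq K\,|x-y|, \]
which is exactly local Lipschitz continuity of $\psi_X$ on $(0,\xi)$.

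The statement is therefore essentially a structural consequence of convexity, and the only point genuinely requiring care is the finiteness of $\varphi_X$ throughout the open interval $(0,\xi)$ — equivalently, that each interior level is reached with positive probability so that $\E_0[e^{-\rho H_x}]>0$. This is precisely what the identification of $\varphi_X$ as the finite solution of (\ref{eq:differential}) on $[0,\xi)$ delivers (see \cite{rogers}, \cite{dym}), and it is also the reason the claim is stated on the \emph{open} interval: as $x\uparrow\xi$ with $\xi<\infty$ the right-derivative of $\varphi_X$ may diverge (reflecting the absorbing or unreachable nature of the endpoint), so a global Lipschitz constant need not exist, whereas every compact subinterval of $(0,\xi)$ stays away from this pathology. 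Finally, I would note that local Lipschitz continuity is exactly the regularity ($\psi_X$ absolutely continuous, hence a.e. differentiable and equal to the integral of its derivative) subsequently needed to apply Proposition~\ref{p:prop1} to $\psi_X$.
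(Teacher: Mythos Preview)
Your argument is correct and follows essentially the same route as the paper: convexity and finiteness of $\varphi_X$ on $(0,\xi)$ give local Lipschitz continuity there, and since $\varphi_X\geq 1$ the logarithm is (locally) Lipschitz on the range, so the composition $\psi_X=\log\varphi_X$ is locally Lipschitz. The paper states this in two lines; your version simply makes the constants explicit and adds helpful commentary on the sign convention and the reason for restricting to the open interval.
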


\begin{proof} $\varphi_X(x)$ is increasing, convex and finite and
therefore locally Lipschitz on $(0,\xi)$. $\varphi(0)=1$, and since 
$\log$
is locally Lipschitz on $[1,\infty)$, $\psi=\log(\varphi)$ is locally
Lipschitz on $(0,\xi)$. \end{proof}

Henceforth we assume that $g$ satisfies Assumption~\ref{ass}, so that 
$g$ is twice differentiable and satisfies the Spence-Mirrlees condition. 
We assume further
that $G(x,\theta)$ is non-decreasing in $x$. Note that this is without 
loss of generality since it can never be 
optimal
to stop at $x'>x$ if $G(x',\theta) < G(x,\theta)$, since to wait
until the first hitting time of $x'$ involves greater discounting and a lower 
payoff.

Consider the forward problem. Suppose the aim is to solve 
(\ref{eq:logvupsi}) for a given $X \in \sX_0$ with associated 
$\log$-eigenfunction $\psi(x) = \psi_X(x) = - \log \E_0[e^{- \rho H_x}]$ 
for the family of objective functions $\{ G(x,\theta) : \theta \in 
\Theta \}$. Here $\Theta$ is assumed to be an interval with endpoints 
$\theta_-$ and $\theta_+$, such that $\Theta \subseteq D_\theta$.

Now let 
\begin{equation}\label{eq:forwardsubdiff}
v(\theta)=\sup_{x : \psi(x) < \infty} [g(x,\theta)-\psi(x).]
\end{equation}
Then
$v=\psi^g$ is the $g$-convex dual of $\psi$. 

By definition $\partial^g v(\theta)=\{x:v(\theta)=g(x,\theta)-\psi(x)\}$ is the (set of) level(s)
at which it is optimal to stop for the problem parameterised by $\theta$. If $\partial^g v(\theta)$
is empty then there is no optimal stopping strategy in the sense that for any finite stopping rule there
is another which involves waiting longer and gives a higher problem value. 

Let $\theta_R$ be the infimum of those values of $\theta \in \Theta$ 
such that 

$\partial^gv(\theta)=\emptyset$. If $v$ is nowhere
$g$-subdifferentiable then we set $\theta_R= \theta_-$. 

\begin{lemma}\label{l:subdiffforward}
The set where 
$v$ is $g$-subdifferentiable 
forms an interval with endpoints $\theta_-$ and $\theta_R$.
\end{lemma}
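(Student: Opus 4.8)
We must show that the set where $v$ is $g$-subdifferentiable is an interval with endpoints $\theta_-$ and $\theta_R$. Since $\theta_R$ is defined as the infimum of the $\theta$ for which $\partial^g v(\theta)=\emptyset$, the content of the claim is that $g$-subdifferentiability cannot fail and then be restored: once it fails at some level, it fails for all larger $\theta$ in $\Theta$. In other words, I need to prove that $\partial^g v(\theta)\neq\emptyset$ for all $\theta\in(\theta_-,\theta_R)$ and that $v$ is $g$-subdifferentiable on the whole of $[\theta_-,\theta_R)\cap\Theta$.

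**Plan.** The natural strategy is a monotonicity-of-the-optimiser argument in the spirit of Lemma~\ref{lem:monsubdiff}, transferred from $\psi$ to its $g$-dual $v=\psi^g$. The first step is to record the symmetry in the setup: Assumption~\ref{ass} is symmetric in $y$ and $z$ (as the text stresses), so the $u$-convex duality applies verbatim with the roles of $x$ and $\theta$ interchanged. Thus $\partial^g v(\theta)$ is the set of optimal stopping levels $x$, and by Definition~\ref{def:subdifferential} the element $x\in\partial^g v(\theta)$ attains the supremum in (\ref{eq:forwardsubdiff}). The key monotone structure is that $g_{x\theta}>0$ (the Spence–Mirrlees condition, Assumption~\ref{ass}(b)), which forces the optimal stopping level $x^*(\theta)$ to be non-decreasing in $\theta$. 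Concretely, I would argue by contradiction: suppose $\theta_1<\theta_2$ both lie in $\Theta$, that $\partial^g v(\theta_2)\neq\emptyset$ (so an optimiser $x_2$ exists at $\theta_2$), but $\partial^g v(\theta_1)=\emptyset$.

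**The main step.** The obstacle, and the heart of the proof, is to show that existence of an optimiser at the larger parameter $\theta_2$ forces existence of an optimiser at the smaller parameter $\theta_1$. The intuition is that failure of $g$-subdifferentiability means the supremum in $\sup_x\{g(x,\theta_1)-\psi(x)\}$ is only approached as $x\to\infty$ (the ``wait forever'' or ``stop at infinity'' degeneracy flagged in Example~\ref{ex:martin}); but by the single-crossing property, increasing $\theta$ makes large $x$ relatively \emph{more} attractive, not less. So if the reward cannot be improved indefinitely by pushing $x\to\infty$ at the larger level $\theta_2$ (which is what $x_2\in\partial^g v(\theta_2)$ says), it certainly cannot at the smaller level $\theta_1$. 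To make this precise I would compare the increments: for $x>x_2$, by the fundamental theorem of calculus and Assumption~\ref{ass}(b),
\[
[g(x,\theta_1)-\psi(x)]-[g(x_2,\theta_1)-\psi(x_2)]
\;\leq\;
[g(x,\theta_2)-\psi(x)]-[g(x_2,\theta_2)-\psi(x_2)],
\]
since $g(x,\theta_2)-g(x,\theta_1)=\int_{\theta_1}^{\theta_2} g_\theta(x,s)\,ds$ is non-decreasing in $x$ (as $g_{x\theta}>0$ means $g_\theta(\cdot,s)$ is increasing). The right-hand side is $\leq 0$ because $x_2$ maximises $g(\cdot,\theta_2)-\psi$. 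Hence the value at $\theta_1$ is also maximised at some finite level (no gain from $x>x_2$), contradicting $\partial^g v(\theta_1)=\emptyset$. This shows the subdifferentiable set is a left-closed interval; since $\theta_R$ is exactly the infimum of non-subdifferentiable levels, the endpoints are $\theta_-$ and $\theta_R$.

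**Remaining care.** I expect the fiddly points to be boundary and degeneracy issues rather than the main inequality. I would need to check: that $x_2$ can be taken finite (which is precisely the content of $\partial^g v(\theta_2)\neq\emptyset$); that the supremum being attained at a finite level, together with $v$ being real-valued there, legitimately yields a genuine $g$-subdifferential element via Definition~\ref{def:subdifferential}; and the degenerate case $\theta_R=\theta_-$ where $v$ is nowhere $g$-subdifferentiable, in which the ``interval'' is empty (or a single endpoint) and the statement holds vacuously. I would also invoke local Lipschitz continuity of $\psi$ (Lemma~\ref{l:lipschitzphi}) and the a.e.\ differentiability it provides so that the integral representation of $g(x,\theta_2)-g(x,\theta_1)$ and the comparison argument are valid on $(0,\xi)$.
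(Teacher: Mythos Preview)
Your proposal is correct and takes essentially the same approach as the paper: both fix an optimiser $x_2$ at the larger parameter $\theta_2$, use the Spence--Mirrlees condition via $\int_{\theta_1}^{\theta_2} g_\theta(x,s)\,ds$ increasing in $x$ to show the objective at $\theta_1$ cannot be improved by taking $x>x_2$, and then conclude attainment on the compact set $[0,x_2]$ by continuity of $g(\cdot,\theta_1)-\psi(\cdot)$. The only cosmetic difference is that the paper argues directly rather than by contradiction; your remark about needing Lemma~\ref{l:lipschitzphi} for the integral comparison is unnecessary, since only smoothness of $g$ (not $\psi$) is used there.
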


\begin{proof}
Suppose $v$ is $g$-subdifferentiable at $\hth$, and suppose $\theta \in 
(\theta_-,\hth)$. We claim that $v$ is $g$-subdifferentiable at 
$\theta$.

Fix $\hx \in \partial^g v(\hth)$. Then $v(\hth) = 
g(\hx,\hth) - \psi(\hx)$ and

\begin{equation}
\label{eq:subd1}
g(\hx,\hth) - \psi(\hx) \geq  g(x,\hth) - \psi(x),
\hspace{20mm} \forall x < \xi,
\end{equation}
and for $x = \xi$ if $\xi<\infty$. We write the remainder of the proof
as if we are in the case $\xi<\infty$; the case $\xi=\infty$ involves 
replacing $x \leq \xi$ with $x < \xi$.

Fix $\theta<\hth$. We want to show
\begin{equation}
\label{eq:subd2}
g(\hx,\theta) - \psi(\hx) \geq  g(x,\theta) - \psi(x),
\hspace{20mm} \forall x \in (\hx, \xi],
\end{equation}
for then
\[ \sup_{x \leq \xi} \{ g(x,\theta) - \psi(x) \} =  \sup_{x \leq \hx}  
\{ g(x,\theta) - \psi(x) \},
\]
and since $g(x,\theta) - \psi(x)$ is continuous in $x$ the supremum is 
attained.

By assumption, $g_\theta(x,t)$ is increasing in $x$, and so for $x \in 
(\hx,\xi]$
\[ \int_\theta^{\hth} [g_\theta(\hx,t) - g_\theta(x,t)] dt \leq 0 \]
or equivalently,
\begin{equation}
\label{eq:subd3}
g(\hx,\hth) - g(\hx,\theta) \leq  g(x,\hth) - g(x,\theta).
\end{equation}
Subtracting (\ref{eq:subd3}) from (\ref{eq:subd1}) gives 
(\ref{eq:subd2}).
\end{proof}

\begin{lemma} \label{l:vlipschitz}
$v$ is locally Lipschitz on $(\theta_-,\theta_R)$. 
\end{lemma}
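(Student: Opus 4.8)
The plan is to bound the difference quotient of $v$ uniformly on each compact subinterval $[\theta_1,\theta_2] \subset (\theta_-,\theta_R)$ and to read off a local Lipschitz constant directly from the smoothness of $g$. The key observation is that on $(\theta_-,\theta_R)$ Lemma~\ref{l:subdiffforward} guarantees that $v$ is $g$-subdifferentiable, so for every $\theta$ in this range the supremum in (\ref{eq:forwardsubdiff}) is attained at some finite $\xstar(\theta) \in \partial^g v(\theta)$, and by Lemma~\ref{lem:monsubdiff} the selection $\theta \mapsto \xstar(\theta)$ is non-decreasing.

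First I would fix $\theta_1 \le \theta \le \hth \le \theta_2$ inside $(\theta_-,\theta_R)$ and set $x = \xstar(\theta)$ and $\hx = \xstar(\hth)$, so that $x \le \hx$ and, by monotonicity, both lie in the bounded interval $[\xstar(\theta_1),\xstar(\theta_2)]$. Using that $v(\theta)=g(x,\theta)-\psi(x)$ and $v(\hth)=g(\hx,\hth)-\psi(\hx)$ are the attained maxima, together with the sub-optimality of $x$ in the $\hth$-problem and of $\hx$ in the $\theta$-problem, the $\psi$-contributions cancel in each bracket and I obtain the sandwich
\[ g(x,\hth) - g(x,\theta) \le v(\hth) - v(\theta) \le g(\hx,\hth) - g(\hx,\theta). \]
Since $g$ is $C^2$ by Assumption~\ref{ass}(a), the mean value theorem rewrites each outer term as $g_\theta(\cdot,\cdot)\,(\hth-\theta)$ with the $\theta$-argument in $[\theta,\hth]$ and the $x$-argument in $[\xstar(\theta_1),\xstar(\theta_2)]$. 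Setting $M = \sup\{ |g_\theta(x,\theta)| : x \in [\xstar(\theta_1),\xstar(\theta_2)],\ \theta \in [\theta_1,\theta_2] \}$, which is finite because $g_\theta$ is continuous on this compact rectangle, the sandwich gives $|v(\hth)-v(\theta)| \le M\,|\hth-\theta|$, which is the claim on $[\theta_1,\theta_2]$, and hence local Lipschitz continuity on $(\theta_-,\theta_R)$.

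The step I expect to be the main obstacle is guaranteeing that the optimisers $\xstar(\theta)$ remain in a compact set as $\theta$ ranges over $[\theta_1,\theta_2]$; this is exactly what is supplied by the $g$-subdifferentiability on $(\theta_-,\theta_R)$, which yields finite maximisers via Lemma~\ref{l:subdiffforward}, and by the monotonicity of the $g$-subdifferential in Lemma~\ref{lem:monsubdiff}. This is also why the argument must break down precisely at $\theta_R$, where $\partial^g v$ becomes empty and $\xstar(\theta)$ escapes towards the right boundary $\xi$, so that $g_\theta$ can no longer be controlled on a fixed compact rectangle. Note finally that the local Lipschitz property of $\psi$ from Lemma~\ref{l:lipschitzphi} is not needed for this particular estimate, since the $\psi$-terms cancel in the sandwich; the regularity of $v$ is inherited entirely from the smoothness of $g$ and the boundedness of the optimal stopping levels.
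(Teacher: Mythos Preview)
Your proof is correct and follows essentially the same route as the paper's: both use optimality of $\xstar(\theta)$ and suboptimality of the other maximiser to derive a two-sided sandwich $g(x,\hth)-g(x,\theta)\le v(\hth)-v(\theta)\le g(\hx,\hth)-g(\hx,\theta)$, and then bound this via the $\theta$-Lipschitz property of $g$. Your version is in fact a little more careful than the paper's, which invokes a local Lipschitz constant for $g$ near a single point $(x',\theta')$; you make explicit, via the monotonicity of $\xstar$ from Lemma~\ref{lem:monsubdiff}, that the optimisers over $[\theta_1,\theta_2]$ lie in the compact rectangle $[\xstar(\theta_1),\xstar(\theta_2)]\times[\theta_1,\theta_2]$, yielding a uniform constant $M$.
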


\begin{proof}
On $(\theta_-,\theta_R)$ $v(\theta)$ is $g$-convex, 
$g$-subdifferentiable and $\xstar(\theta)$ is monotone increasing.

Fix $\theta', \theta''$ such that $\theta_- < \theta'' < \theta' < 
\theta_R$.
Choose $x' \in \partial^g v(\theta')$ and $x'' \in \partial^g 
v(\theta'')$
and suppose $g$ has Lipschitz constant $K'$ (with respect to $\theta$) 
in 
a neighbourhood of $(x',\theta')$.

Then $v(\theta') = g(x',\theta') - \psi(x')$ and
$v(\theta'') \geq g(x',\theta'') - \psi(x')$
so that 
\[ v(\theta') - v(\theta'') \leq g(x', \theta')-g(x',\theta'') 
\leq K'(\theta' - \theta'') \]
and a reverse inequality follows from considering $v(\theta'') = 
g(x'',\theta'') - \psi(x'')$.
\end{proof}

Note that it is not possible under our assumptions to date ($g$ 
satisfying Assumption~\ref{ass}, and $g$ monotonic in $x$) to conclude 
that $v$ is continuous at $\theta_-$, or even that $v(\theta_-)$ exists.
Monotonicity guarantees that even if $\theta_- \notin \Theta$ we can 
still define
$\xstar(\theta_-) := \lim_{\theta \downarrow \theta_-} \xstar(\theta)$.
For example, suppose $\Theta=(0,\infty)$ and for $\epsilon \in (0,1)$ 
let $g_{\epsilon}(x,\theta) = g(x,\theta) + \epsilon f(\theta)$. Then if
$v_{\epsilon}(\theta)$ is the $g_{\epsilon}$-convex dual of $\psi$ we 
have $v_{\epsilon}(\theta) = v(\theta) + \epsilon f(\theta)$, where
$v(\theta) = v_{0}(\theta)$. If $g$ and $\psi$ are such that 
$\lim_{\theta \downarrow 0} v(\theta)$ exists and is finite, then 
choosing any 
bounded $f$ for which $\lim_{\theta \downarrow 0} f(\theta)$ does 
not exist gives an example for which $\lim_{\theta \downarrow 0} 
v_{\epsilon}(\theta)$ does not exist.   
It is even easier to construct modified examples such that $v(\theta_-)$
is infinite.

Denote $\Sigma(\theta,\xi)=\limsup_{x \uparrow \xi} 
\{ g(x,\theta)-\psi(x) \}$. 
Then for $\theta_R<\theta <\theta_+$, 
$\psi^g(\theta)=\Sigma(\theta,\xi)$. We 
have shown:
\begin{proposition}\label{p:forward}
If $g$ satisfies Assumption~\ref{ass}, $g$ is increasing in $x$ and if $X$ is a reflecting diffusion 
in natural scale then the solution to the forward problem is
$ 
V(\theta)= 
\exp(\psi^g(\theta)) 
$. 
\end{proposition}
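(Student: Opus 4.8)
The plan is to obtain the formula directly from the reduction to first hitting times in Lemma~\ref{l:coincide} together with the definition of the $g$-convex dual, and then to use the preceding lemmas only to describe how the defining supremum is (or is not) attained. By Lemma~\ref{l:coincide} we have $V(\theta)=\hat{V}(\theta)=\sup_{x:\varphi(x)<\infty} G(x,\theta)/\varphi(x)$, so the whole task reduces to identifying $\log V$ with $\psi^g$.

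First I would pass to logarithms in the identity $V=\hat V$. Since $x\mapsto\log x$ is strictly increasing and continuous on $(0,\infty)$ it commutes with the supremum, and since $\{x:\varphi(x)<\infty\}=\{x:\psi(x)<\infty\}$ and $\psi=\log\varphi$, $g=\log G$, we get
\[ \log V(\theta)=\sup_{x:\psi(x)<\infty}\{g(x,\theta)-\psi(x)\}=\psi^g(\theta), \]
which is precisely the quantity $v(\theta)$ defined in~(\ref{eq:forwardsubdiff}). Exponentiating yields $V(\theta)=\exp(\psi^g(\theta))$, and the convention $\log(+\infty)=+\infty$ handles the case where the supremum is infinite (as in Example~\ref{ex:BM1a} for $\theta>\sqrt{2\rho}$).

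Next I would record the two regimes in which this supremum behaves differently, which is where the earlier lemmas do their work. For $\theta\in(\theta_-,\theta_R)$, Lemma~\ref{l:subdiffforward} guarantees that $v$ is $g$-subdifferentiable, so the supremum in $\psi^g(\theta)$ is attained at a finite level $\xstar(\theta)\in\partial^g v(\theta)$, which is the optimal stopping level, and $V(\theta)=G(\xstar(\theta),\theta)/\varphi(\xstar(\theta))$; Lemma~\ref{l:vlipschitz} moreover gives the asserted local Lipschitz regularity over this range. For $\theta_R<\theta<\theta_+$ the supremum is attained at no finite level, and one invokes the identity $\psi^g(\theta)=\Sigma(\theta,\xi)=\limsup_{x\uparrow\xi}\{g(x,\theta)-\psi(x)\}$, so that the value is approached along hitting times $H_{x_n}$ with $x_n\uparrow\xi$ rather than achieved, corresponding to `stopping at $\xi$' or `waiting forever' as in Example~\ref{ex:martin}. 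In both regimes the single formula $V(\theta)=\exp(\psi^g(\theta))$ holds.

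The genuinely substantive input is Lemma~\ref{l:coincide}, which collapses the search over all finite stopping times to a search over first hitting times; granting that, the equality $V=e^{\psi^g}$ is a formal consequence of the monotonicity of the exponential. The only point requiring care, and the one I expect to be the main obstacle, is the non-subdifferentiable regime $\theta>\theta_R$: one must confirm that the optimal stopping value still equals $\exp(\Sigma(\theta,\xi))$ even though no finite optimiser exists. This causes no difficulty here because $\hat V$ is already defined as a supremum over \emph{all} admissible levels $x$, which automatically captures the limiting behaviour as $x\uparrow\xi$; hence no approximation argument beyond Lemma~\ref{l:coincide} and the $\Sigma$-identity established just above the statement is needed.
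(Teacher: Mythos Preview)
Your proposal is correct and follows essentially the same approach as the paper: the proposition is stated immediately after the phrase ``We have shown:'', summarising exactly the chain you describe---Lemma~\ref{l:coincide} reduces $V$ to $\hat V$, the passage to logarithms in (\ref{eq:logvupsi}) and (\ref{eq:forwardsubdiff}) identifies $v$ with $\psi^g$, and Lemmas~\ref{l:subdiffforward} and~\ref{l:vlipschitz} together with the $\Sigma$-identity classify the two regimes. Your write-up is in fact slightly more explicit than the paper's own treatment about why the logarithm commutes with the supremum and how the non-subdifferentiable case is covered.
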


\begin{remark}\label{rem:decr}

Suppose now that $g_x(x,.)$ is strictly decreasing (the reverse 
Spence-Mirrlees condition). The arguments above apply with the obvious 
modifications. Let $\theta_L$ be the supremum of those values $\theta 
\in \Theta$ 
such 
that $\xstar(\theta)=\emptyset$. Then the analogues to Lemmas 
\ref{l:subdiffforward} and \ref{l:vlipschitz} show that $v$ is 
$g$-subdifferentiable and locally Lipschitz on $(\theta_L,\theta_+)$ 
and 
that for $\theta_- <\theta<\theta_L$
\[
V(\theta)= 
\exp(\Sigma(\theta,\xi)) . 
\]
\end{remark}

We close this section with some examples. 
\begin{example}\label{ex:notlipsch}
Recall Example~\ref{ex:powers}, but note that in that example 
$\theta$ was restricted to take values in $\Theta=(1,2)$.
Suppose $\Theta = [0,\infty)$, $g(x,\theta) = \theta \log x$ and 
$\psi(x)=\log 
(1+x^2)$. Then $\theta_R=2$ and for $\theta< \theta_R$, 
$x^*(\theta)=(\theta/(2-\theta))^{1/2}$. Further, for $\theta \leq 2$ 
\[v(\theta) = 
\frac{\theta}{2} 
\log(\theta) + \frac{2 - \theta}{2} \log (2-\theta) - \log 2, \] 
and $v(\theta)=\infty$ for $\theta > 2$.

Note that $v$ is continuous on $[0,\theta_R]$, but not on $\Theta$.

\end{example}

\begin{example}\label{ex:fullcalc} Suppose $g(x,\theta) = x 
\theta$ and $\Theta = (0,\infty)$. Suppose $X$ is a diffusion on 
$[0,1)$, 
with $1$ a 
natural boundary and diffusion coefficient 
$\sigma(x)^2=\frac{\rho (1-x^2)^2}{1+x^2}$. Then 
$\varphi(x)=\frac{1}{1-x^2}$ 
and 
\[v(\theta)=\sup_{x<1} [\theta x + \log(1-x^2) ]. \] 
It is 
straightforward to calculate 
that $\xstar(\theta)= \sqrt{1 + \theta^{-2}} - 1/\theta$ and then 
that $v(\theta) : (0,\infty)  \rightarrow \R$ is given by 
\begin{equation}
\label{eq:vfullcalc}
v(\theta)=\sqrt{1+\theta^2}-1-\log \left( 
\frac{\theta^2}{2(\sqrt{1+\theta^2}-1)} \right) . 
\end{equation}
\end{example}

\section{Application of $u$-convex analysis to the Inverse Problem}
\label{S:inverse}

Given an interval $\Theta \subseteq \Rex$ with endpoints
$\theta_-$ and $\theta_+$ and a value function $V$ defined on $\Theta$
we now discuss how to determine whether or not 
there exists a diffusion in $\sX_0$ that solves the inverse problem for 
$V$. Theorem~\ref{t:existence} gives a necessary and sufficient 
condition for existence. This condition is rather indirect, so in
Theorem~\ref{t:suffexist} we give some sufficient conditions in terms 
of the $g$-convex dual $v^g$ and associated objects.

Then, given existence, a supplementary question is whether 
$\{V(\theta): \theta \in 
\Theta\}$ contains enough information to determine the diffusion 
uniquely. In Sections~\ref{SS:leftextension}, \ref{SS:rightextension}
and \ref{SS:middleextension} we consider three different phenomena 
which 
lead to non-uniqueness. Finally in Section~\ref{SS:uniqueness} we give 
a simple sufficient condition for uniqueness.

Two key quantities in this section are the lower and upper bound for the 
range of the $g$-subdifferential of $v$ on $\Theta$. Recall that we 
are assuming that the Spence-Mirrlees condition holds so that $\xstar$ 
is 
increasing on $\Theta$.
Then, if $v$ is somewhere $g$-subdifferentiable we set $\xminus 
= \sup \{ x \in \partial^g v(\theta_-) \}$, or if $\theta_- \notin 
\Theta$,  
$\xminus=\lim_{\theta \downarrow \theta_-} \xstar(\theta)$. Similarly, 
we 
define $\xplus= \inf \{ x \in \partial^g v(\theta_+) \}$, 
or if $\theta_+ \notin \Theta$, 
$\xplus=\lim_{\theta \uparrow \theta_+} \xstar(\theta)$, and
$\xR=\lim_{\theta \uparrow \theta_R} \xstar(\theta)$.
If $v$ is nowhere $g$-subdifferentiable then we set 
$\xminus=\xR=\xplus=\infty$.


\subsection{Existence}
\label{SS:existence}
In the following we assume that $v$ is $g$-convex on $\R_+ \times \Theta$, 
which means that for all $\theta \in \Theta$,
\[v(\theta)={v^g}^g(\theta)=\sup_{x \geq 0} \{g(x,\theta)-v^g(x)\}.\]
Trivially this is a necessary condition for the existence of a diffusion
such that the solution of the optimal stopping problems are given by 
$V$. Recall that we are also assuming that $g$ is increasing in $x$ and 
that it satisfies Assumption~\ref{ass}.

The following fundamental theorem provides necessary and sufficient 
conditions for existence of a consistent diffusion.

\begin{theorem} \label{t:existence}
There exists $X \in \sX_0$ such that $V_X=V$ if and only if there exists 
$\phi : [0,\infty) \rightarrow [1,\infty]$ such that $\phi(0)=1$, $\phi$
is increasing and convex and $\phi$ is such that $(\log \phi)^g=v$ on 
$\Theta$. 
\end{theorem}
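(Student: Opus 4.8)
The plan is to prove the biconditional by unwinding the definitions on each side and using the structural correspondence, established earlier in the excerpt, between increasing convex functions $\phi$ with $\phi(0)=1$ and diffusions $X \in \sX_0$. Recall that Equation (\ref{eq:differential}) sets up a bijection: given such a $\phi$, there is a unique generalised diffusion $X \in \sX_0$ with eigenfunction $\varphi_X = \phi$, and conversely every $X \in \sX_0$ has an eigenfunction $\varphi_X$ which is increasing, convex, and satisfies $\varphi_X(0)=1$. So the content of the theorem is really the translation of the phrase ``$V_X = V$'' into the language of $g$-duality, namely $(\log \varphi_X)^g = v$.

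\textbf{Forward direction.} Suppose $X \in \sX_0$ satisfies $V_X = V$. Set $\phi = \varphi_X$. By the remarks surrounding (\ref{eq:differential}), $\phi : [0,\infty) \to [1,\infty]$ is increasing, convex, and satisfies $\phi(0)=\varphi_X(0)=1$ (the value may be $+\infty$ beyond the natural boundary $\xi$, which is why the codomain is $[1,\infty]$). The log-eigenfunction is $\psi_X = \log \phi$. Now Proposition~\ref{p:forward} identifies the solution of the forward problem as $V_X(\theta) = \exp(\psi_X^g(\theta))$, where $\psi_X^g$ is the $g$-convex dual of $\psi_X = \log\phi$. Taking logarithms of the hypothesis $V_X = V$ gives $v = \log V = \log V_X = \psi_X^g = (\log\phi)^g$ on $\Theta$, which is exactly the required condition.

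\textbf{Reverse direction.} Conversely, suppose such a $\phi$ exists. Use the construction after (\ref{eq:differential}): since $\phi$ is increasing, convex, with $\phi(0)=1$ (and $\phi'(0+)\geq 0$ automatically from monotonicity), it defines via (\ref{eq:differential}) a speed measure $m$ and hence a generalised diffusion $X \in \sX_0$ whose eigenfunction is $\varphi_X = \phi$, so $\psi_X = \log\phi$. Applying Proposition~\ref{p:forward} to this $X$ gives $V_X(\theta) = \exp(\psi_X^g(\theta)) = \exp((\log\phi)^g(\theta)) = \exp(v(\theta)) = V(\theta)$ on $\Theta$, so $X$ solves the inverse problem for $V$.

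\textbf{Main obstacle.} The two directions are each a short chain once Proposition~\ref{p:forward} and the $\phi \leftrightarrow X$ correspondence are in hand, so the real work is in verifying that these two ingredients apply without extra hypotheses. Two points need care. First, Proposition~\ref{p:forward} is stated to give $V(\theta)=\exp(\psi^g(\theta))$ for \emph{all} $\theta$, but its derivation distinguished the $g$-subdifferentiable range $(\theta_-,\theta_R)$ from the range $\theta_R < \theta < \theta_+$ where the value is $\exp(\Sigma(\theta,\xi))$; I would need to confirm that $\psi^g(\theta)$, as the $g$-dual, coincides with the problem value on the whole of $\Theta$ including the regime with no finite optimal stopping rule, so that the clean statement ``$V_X = \exp(\psi_X^g)$ on $\Theta$'' is legitimate. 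Second, and this is the genuinely delicate step, I must make sure the correspondence between convex increasing $\phi$ and $X \in \sX_0$ is exactly an equivalence: that the admissibility conditions assumed on $\sX_0$ (reflection at $0$, natural or absorbing right boundary $\xi$ with $\xi + m(\xi+)=\infty$) match precisely the class of $\phi$ allowed in the theorem (codomain $[1,\infty]$, permitting $\phi$ to blow up at a finite $\xi$). The boundary behaviour at $\xi$ — whether $\phi$ reaches $\infty$ at a finite point and how that corresponds to absorption or non-attainability — is where a careless argument could slip, so I would treat the finite-$\xi$ and infinite-$\xi$ cases explicitly to confirm the map is a genuine bijection onto $\sX_0$.
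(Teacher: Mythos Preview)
Your proposal is correct and follows essentially the same route as the paper: both directions reduce to the identity $V_X = \exp(\psi_X^g)$ combined with the bijection between increasing convex $\phi$ with $\phi(0)=1$ and diffusions in $\sX_0$. The paper's version of the reverse direction is slightly more explicit than yours---it writes down the speed measure $m(dx) = \phi''(dx)/(2\rho\phi)$, builds $X$ as a time-change of Brownian motion, and verifies directly that $\phi$ solves the eigenfunction equation---which is exactly the content of the ``construction after (\ref{eq:differential})'' you invoke, and it addresses your flagged boundary concern by treating the case $\xi<\infty$ separately.
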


\smallskip
\begin{proof} 

If $X \in \sX_0$ then $\phi_X(0)=1$ and $\phi_X$ is increasing and convex. 
Set $\psi_X=\log{\phi_X}$. If $V_X=V$ then \[v(\theta)=v_X(\theta)=\sup_x \{g(x,\theta)-\psi_X(x)\}= \psi^g_X.\]
  
Conversely, suppose $\phi$ satisfies the conditions of the theorem, and 
set $\psi = \log \phi$. Let 
$\xi = \sup \{ x : \phi(x) < \infty \}$.  
Note that if $\xi < \infty$ then
\[ (\log \phi)^g(\theta) = \sup_{x \geq 0} \{g(x,\theta)- \psi(x) \} = 
\sup_{x \leq \xi} \{g(x,\theta)-\psi(x) \} \]
and the maximiser $x^*(\theta)$ satisfies $x^*(\theta) \leq \xi$.

For $0\leq x \leq \xi$ define a measure $m$ via
\begin{equation} \label{eq:speed}
m(dx)= \frac{1}{2 \rho} \frac{\phi''(x)}{\phi}  dx  
     = \frac{\psi''(x)+ (\psi'(x))^2}{2 \rho} dx.
\end{equation}
Let $m(dx)=\infty$ for $x<0$, and, if $\xi$ is finite 
$m(dx) = \infty$ for $x > \xi$. We 
interpret (\ref{eq:speed}) in a distributional sense whenever $\phi$ 
has a discontinuous derivative. In the language of strings $\xi$ is the 
length of the string with mass distribution $m$. We assume that
$\xi>0$. The case $\xi = 0$ is a degenerate case which can be covered 
by a direct argument. 

Let $B$ be a Brownian motion started at 0 with local time process $L_u^z$ 
and define $(\Gamma_u)_{u \geq 0}$ via
\[\Gamma_u=\int_{\R} m(dz) L_u^z = \int_0^t \frac{1}{2 \rho} 
\frac{\psi''(B_s)+ (\psi'(B_s))^2}{\psi(B_s)} ds .\]
Let $A$ be the right-continuous inverse to $\Gamma$. Now
set $X_t=B_{A_t}$. Then $X$ is a local martingale (whilst away from zero) 
such that $d \langle  X \rangle_t/dt = dA_t/dt = (dm/dx |_{x = X_t})^{-1}$.
When $m(dx)= \sigma(x)^{-2} dx$, we have $d\langle  X \rangle_t = 
\sigma(X_t)^2 dt$.

We want to conclude that $\E[e^{-\rho H_x}]=\exp(-\psi(x))$. 
Now, $\varphi_X(x)=(\E[e^{-\rho H_x}])^{-1}$ is the 
unique increasing solution to 
\[\frac{1}{2} \frac{d^2 f}{dm dx} = \rho f\,\] 
with the boundary conditions $f'(0-)=0$ and $f(0)=1$. 
Equivalently, for all $x,y \in (0,\xi)$ with $x<y$, 
$\varphi_X$ solves \[f'(y-)-f'(x-)= \int_{[x,y)} 2 \rho f(z) m(dz).\] By 
the definition of $m$ above it is easily verified that $\exp(\psi(x))$ 
is a solution to this equation. 
Hence $\phi = \varphi_X$ and our candidate process solves the inverse 
problem.
\end{proof}

\begin{remark}
\label{r:candidate}
Since $v$ is $g$-convex a natural candidate for $\phi$ is $e^{v^g(x)}$, 
at least if $v^g(0)=0$ and $e^{v^g}$ is convex. Then $\phi$ is the 
eigenfunction $\varphi_X$ of a diffusion $X \in \sX_0$.
\end{remark}

Our next example is one where $\phi(x) = e^{v^g(x)}$ is convex but not 
twice differentiable, and in consequence the consistent diffusion has a 
sticky point. This illustrates the need to work with generalised 
diffusions. For related examples in a different context see Ekstr\"{o}m 
and Hobson~\cite{hobson}.

\begin{example}\label{ex:stickyatpoints}
Let $\Theta = \R_+$ and let the objective function be
$g(x,\theta)=\exp(\theta x)$.
Suppose
\[
V(\theta)= \left\{\begin{array}{ll}
\exp(\frac{1}{4} \theta^2) &\; 0 \leq \theta \leq 2, \\
\exp(\theta - 1) &\; 2 < \theta \leq 3, \\
\exp(\frac{2}{3 \sqrt{3}} \theta^{3/2}) &\; 3 < \theta.
\end{array}\right.
\]
Writing $\varphi = e^{v^g}$ we calculate
\[\varphi(x)= \left\{\begin{array}{ll}
\exp(x^2) &\; 0 \leq x \leq 1, \\
\exp(x^3) &\; 1 < x.
\end{array}\right.
\]
Note that $\varphi$ is increasing and convex, and $\varphi(0)=1$.
Then $\varphi '$ jumps at $1$ and since
\[ \varphi(1) = \varphi ' (1+)- \varphi '(1-) = 2 \rho \varphi(1) m(\{1 
\})\]
we conclude that $m (\{ 1 \}) =\frac{1}{2 \rho}$. Then $\Gamma_u$
includes a multiple of the local time at
1 and the diffusion $X$ is sticky there.
\end{example}

Theorem~\ref{t:existence} converts a question about existence of a 
consistent diffusion into a question about existence of a 
log-eigenfunction with particular properties including $(\log \phi)^g = 
v$. We would like to have conditions which apply more directly to the 
value function $V(\cdot)$.
The conditions we derive
depend on the value of $\xminus$. 

As stated in Remark~\ref{r:candidate}, a natural candidate for $\phi$ 
is $e^{v^g(x)}$. As we prove below, if $\xminus=0$ this candidate leads 
to a consistent diffusion provided $v^g(0)=0$ and $e^{v^g(x)}$ is 
convex and strictly increasing. If $\xminus>0$ then the 
sufficient conditions are 
slightly different, and $e^{v^g}$ need not be globally convex.

\begin{theorem} \label{t:suffexist}
Assume $v$ is $g$-convex.
Each of the following is a sufficient condition for there to exist a 
consistent diffusion:
\begin{enumerate}
\item $\xminus=0$, $v^g(0)=0$ and $e^{v^g(x)}$ is convex and 
increasing on $[0,\xplus)$.
\item $0<\xminus<\infty$, $v^g(\xminus)>0$, $e^{v^g(x)}$ is convex and 
increasing on 
$[\xminus,\xplus)$, 
and on $[0,\xminus)$, $v^g(x) \leq f(x)=\log(F(x))$ where 
\[F(x)=1+x \frac{\exp(v^g(\xminus))-1}{\xminus}\] 
is the straight line connecting the points $(0,1)$ 
and $(\xminus,e^{v^g(\xminus)})$.
\item $\xminus=\infty$ and there exists a convex, 
increasing function $F$ with $\log(F(0))=0$ 
such that $f(x) \geq v^g(x)$ for all $x \geq 0$ 
and \[\lim_{x \rightarrow \infty}\{f(x)-v^g(x) = 0\},\]
where $f = \log F$.
\end{enumerate}
\end{theorem}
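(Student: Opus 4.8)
The plan is to reduce everything to Theorem~\ref{t:existence}: in each case it suffices to exhibit a function $\phi:[0,\infty)\to[1,\infty]$ with $\phi(0)=1$, $\phi$ increasing and convex, and $(\log\phi)^g=v$ on $\Theta$. The guiding observation is that the duality requirement is almost free. Since $v$ is assumed $g$-convex, Lemma~\ref{def:uconvex2} gives $(v^g)^g=v$, so the candidate $\psi=\log\phi=v^g$ automatically satisfies $\psi^g=v$. Hence the real content in every case is to manufacture an \emph{increasing convex} $\phi$ with $\phi(0)=1$ by modifying $e^{v^g}$ only on the regions $[0,\xminus)$ and $[\xplus,\infty)$ lying outside the range of the $g$-subdifferential. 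The device that keeps such modifications harmless is the monotonicity of $\xstar$ (Lemma~\ref{lem:monsubdiff} and Proposition~\ref{p:prop1}): for every $\theta\in\Theta$ the maximiser $\xstar(\theta)$ lies in $[\xminus,\xplus]$, so if outside $[\xminus,\xplus)$ we only raise $\psi$ above $v^g$ (equivalently keep $\phi\ge e^{v^g}$ there), the supremum defining $\psi^g(\theta)$ is still attained in $[\xminus,\xplus)$ where $\psi=v^g$, and $\psi^g(\theta)=v(\theta)$ is preserved.

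For condition~1, $\xminus=0$ and $v^g(0)=0$ give $\phi(0)=e^{v^g(0)}=1$, and $e^{v^g}$ is convex and increasing on $[0,\xplus)$ by hypothesis. The only remaining issue is $[\xplus,\infty)$, which I would dispose of by setting $m=\infty$ beyond $\xplus$, letting $\xplus$ serve as the right endpoint $\xi$ of the string. This makes $\xplus$ an absorbing boundary, which is permitted in $\sX_0$, and since no maximiser for $\theta\in\Theta$ exceeds $\xplus$ the dual on $\Theta$ is untouched. Thus $\phi=e^{v^g}$ on $[0,\xplus)$ (extended by $\infty$) meets the hypotheses of Theorem~\ref{t:existence}.

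Condition~2 is the delicate one. On $[\xminus,\xplus)$ I keep $\phi=e^{v^g}$ and absorb beyond $\xplus$ as before; on $[0,\xminus)$ I replace $e^{v^g}$ by the straight line $F$ joining $(0,1)$ to $(\xminus,e^{v^g(\xminus)})$. The hypotheses are tailored to this: $F(0)=1$; $v^g(\xminus)>0$ makes $F$ increasing; and $v^g\le f=\log F$ on $[0,\xminus)$ gives $\phi=F\ge e^{v^g}$ there, so the dual on $\Theta$ is preserved by the principle above. The one genuine verification is convexity at the join $\xminus$, i.e. that the chord slope $F'$ does not exceed $(e^{v^g})'(\xminus+)$. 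I would argue as follows. For $x\le\xminus$ the dual maximiser is $\theta_-$, so $v^g(x)=g(x,\theta_-)-v(\theta_-)$ and $(e^{v^g})'(\xminus-)=e^{v^g(\xminus)}g_x(\xminus,\theta_-)$; approaching from the right the maximiser is some $\theta^*\ge\theta_-$, so $(e^{v^g})'(\xminus+)=e^{v^g(\xminus)}g_x(\xminus,\theta^*)$. The Spence-Mirrlees monotonicity of $g_x$ in $\theta$ (Assumption~\ref{ass}) then gives $(e^{v^g})'(\xminus+)\ge(e^{v^g})'(\xminus-)$, so any kink at $\xminus$ opens upward; and $e^{v^g}\le F$ on the left with equality at $\xminus$ forces $(e^{v^g})'(\xminus-)\ge F'$. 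Chaining these yields $F'\le(e^{v^g})'(\xminus+)$, so the spliced $\phi$ is convex.

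Finally, for condition~3, where $\xminus=\infty$ and optimal stopping is effectively `at infinity', I would simply take $\phi=F$, so that $\phi(0)=1$ and $\phi$ is convex and increasing by hypothesis, and only $(\log\phi)^g=f^g=v$ on $\Theta$ needs checking. Writing $g(x,\theta)-f(x)=[g(x,\theta)-v^g(x)]-[f(x)-v^g(x)]$, the inequality $f\ge v^g$ gives $f^g\le(v^g)^g=v$, while $f-v^g\to0$ together with the fact that $\xminus=\infty$ forces the value $v(\theta)=\sup_x\{g(x,\theta)-v^g(x)\}$ to be approached only as $x\to\infty$ (cf. Proposition~\ref{p:forward} with $\xi=\infty$) yields $\limsup_{x\to\infty}\{g(x,\theta)-f(x)\}=v(\theta)$, hence $f^g=v$. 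I expect the convexity-at-the-join step in condition~2 and this asymptotic identification of the dual in condition~3 to be the main obstacles; both rest on pinning down the behaviour of $v^g$ at the extreme points $\xminus$ and $x\to\infty$, which is exactly where the monotone $g$-subdifferential structure does the essential work.
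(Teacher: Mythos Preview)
Your proposal is correct and follows essentially the same approach as the paper: the same candidate $\phi$ in each case (namely $e^{v^g}$ absorbed at $\xplus$ in Case~1, the linear patch $F$ on $[0,\xminus)$ spliced with $e^{v^g}$ in Case~2, and $\phi=F$ in Case~3), and the same two mechanisms---$\psi\ge v^g\Rightarrow\psi^g\le v$ together with monotonicity of $\xstar$ forcing the supremum back onto $[\xminus,\xplus)$---to verify $(\log\phi)^g=v$. Your explicit computation of the one-sided derivatives at $\xminus$ via the Spence--Mirrlees condition is a slight variant of the paper's appeal to the proof of Proposition~\ref{p:prop1}, but the content is identical.
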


\begin{proof}

We treat each of the conditions in turn. If $\xminus=0$ 
then Theorem \ref{t:existence} applies directly on taking $\phi(x) = 
e^{v^g(x)}$, with $\phi(x)=\infty$ for $x>\xplus$ (we use the fact that
$v$ is $g$-convex and so ${v^g}^g=v$).

Suppose $0<\xminus<\infty$. The condition $e^{v^g(x)} \leq F(x)$ on 
$[0,\xminus)$ implies $F'(x)= (e^{v^g(\xminus)} - 1)/\xminus \leq 
(e^{v^g(\xminus -)})'$. Although the left-derivative ${v^g(x-)}'$ need not 
equal the right-derivative ${v^g(x+)}'$ the arguments in the proof of 
Proposition~\ref{p:prop1} show that ${v^g(x-)}' \leq 
v^g(x+)'$.
This implies that the function
\[\phi_F(x) = \left\{ \begin{array}{ll} 
F(x) & x < \xminus \\
\exp(v^g(x))        & \xminus \leq x <\xplus
\end{array} \right. \]
is convex at $\xminus$ and hence convex and increasing on $[0,\xplus)$.

Setting $\phi_F(\xplus) = \lim_{x \uparrow \xplus} \phi_F(x)$ and 
$\phi_F = 
\infty$ for $x>\xplus$ 
we have a candidate for the 
function in Theorem~\ref{t:existence}.

It remains to show that $(\log \phi_F)^g = v$ on $\Theta$.
We now check that $\phi_F$ is consistent with $V$ on $\Theta$, which 
follows if 
the $g$-convex dual of $\psi=\log(\phi_F)$ is equal to $v$ on $\Theta$.

Since $\psi \geq v^g$ we have $\psi^g \leq v$. We aim to prove the 
reverse inequality.
By definition, we have for $\theta \in \Theta$
\begin{equation} \label{eq:vee}
\psi^g(\theta)=\left( \sup_{x < \xminus}\{g(x,\theta)-f(x)\} \right) 
\vee 
\left( \sup_{\xminus \leq x \leq \xplus}\{g(x,\theta)-v^g(x)\}\right).
\end{equation}

Now fix $x \in [0,\xminus)$. For $\theta<\theta_R$ we have 
by the definition of the $g$-subdifferential
\[g(\xstar(\theta),\theta)-v^g(\xstar(\theta)) \geq g(x,\theta)-v^g(x).\] 
Hence $v(\theta) = \sup_{x \geq 0}\{g(x,\theta)-v^g(x)\} = \sup_{x 
\geq \xminus}\{g(x,\theta)-v^g(x)\} \leq \psi^g(\theta)$.

Similarly, if $\theta \geq \theta_R$ we have for all $x' \in [0,\xminus)$,
\[\limsup_{x \rightarrow \infty} g(x,\theta)-v^g(x) 
\geq g(x',\theta)-v^g(x').\] 
and $v(\theta) = \lim \sup_{x}\{g(x,\theta)-v^g(x)\} = \sup_{x 
\geq
\xminus }\{g(x,\theta)-v^g(x)\} \leq \psi^g(\theta)$.

Finally, suppose $\xminus = \infty$. 
By the definition of $f^g$ and the 
condition 
$f \geq v^g$ we get
\begin{eqnarray*}
f^g(\theta) &=& \sup_{x \geq 0}\{g(x,\theta)-f(x)\} \\
&\leq& \sup_{x \geq 0} \{g(x,\theta)-v^g(x)\} \\
&=& v(\theta).
\end{eqnarray*}

On the other hand
\begin{eqnarray*}
v(\theta) 
& = &\limsup_{x \rightarrow \infty}\{g(x,\theta)- f(x) + f(x) - v^g(x) 
\} \\
& \leq  &\limsup_{x \rightarrow \infty}\{g(x,\theta)-f(x)\} +
\lim_{x \rightarrow \infty}\{f(x)-v^g(x)\} \; \leq \; f^g(\theta).
\end{eqnarray*}
Hence $v(\theta)=f^g(\theta)$ on $\Theta$.
\end{proof}

\begin{remark}
Case 1 of the Theorem gives the sufficient condition mentioned in the 
paragraph headed Inverse Problem in 
Section~\ref{s:tfatip}. If $\theta_- \in \Theta$ then $\xminus = 0$ if 
and 
only if for all $x>0$, $g(x,\theta_-) - v^g(x) < g(0, 
\theta_-)$, where we use the fact that, by supposition, 
$v^g(0)=0$. 
\end{remark}

\subsection{Non-Uniqueness}
Given existence of a diffusion $X$ which is consistent with the values 
$V(\theta)$, the aim of the next few sections is to determine whether 
such a diffusion is unique.

Fundamentally, there are two natural ways in which uniqueness 
may fail. Firstly, the domain $\Theta$ may be too small (in extreme 
cases $\Theta$ might contain a single element). Roughly speaking the 
$g$-convex duality is only sufficient to determine $v^g$ (and hence the 
candidate $\phi$) over $(\xminus,\xplus)$ and there can be many 
different convex extensions of $\phi$ to the real line, for each of 
which $\psi^g = v$. 
Secondly, even when $\xminus=0$ and $\xplus=\infty$, 
if $\xstar(\theta)$ is discontinuous then there can be
circumstances in which there are a multitude of convex functions 
$\phi$ with $(\log \phi)^g = v$. In that case, if there are no $\theta$ 
for which it is optimal to stop in an interval $I$, then it is only 
possible to infer a limited amount about the speed measure of the 
diffusion over that 
interval.

In the following lemma we do not assume that $\psi$ is $g$-convex.

\begin{lemma}
\label{l:psig=v}
Suppose $v$ is $g$-convex and $\psi^g=v$ on $\Theta$. 
Let $A(\theta) = \{x: g(x,\theta) - \psi(x) = \psi^g(\theta) \}$.
Then, for each
$\theta$,
$A(\theta) \subseteq \partial^g \psi^g (\theta) \equiv \partial^g v 
(\theta)$, and for
$x \in A (\theta)$, $\psi(x)=\psi^{gg}(x) =v^g(x)$. Further, for
$\theta \in (\theta_-,\theta_R)$ we have $A(\theta) \neq \emptyset$. 
\end{lemma}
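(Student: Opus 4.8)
The plan is to prove the three assertions in the order (2), (1), (3), because the subdifferential inclusion drops out immediately once the identity $\psi(x)=v^g(x)$ on $A(\theta)$ is established. First I would record two global inequalities. On one hand, the Young inequality $\psi(x)+\psi^g(\theta)\ge g(x,\theta)$, together with $\psi^g=v$ on $\Theta$, gives $g(x,\theta)-v(\theta)\le \psi(x)$ for every $x$ and every $\theta\in\Theta$; taking the supremum over $\theta\in\Theta$ yields $v^g(x)\le\psi(x)$ for all $x$. On the other hand, since the $g$-convexification $\psi^{gg}=(\psi^g)^g$ is the greatest $g$-convex minorant of $\psi$ (Lemma~\ref{def:uconvex2} and the remark following it) we have $\psi^{gg}\le\psi$; and because $\psi^g=v$ on $\Theta$, the definition of the dual (taken over $\Theta$) gives $\psi^{gg}(x)=\sup_{\theta\in\Theta}\{g(x,\theta)-\psi^g(\theta)\}=\sup_{\theta\in\Theta}\{g(x,\theta)-v(\theta)\}=v^g(x)$. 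Hence $\psi^{gg}=v^g\le\psi$ globally. Now fix $\theta$ and $x\in A(\theta)$, so $g(x,\theta)-\psi(x)=\psi^g(\theta)=v(\theta)$, i.e.\ $\psi(x)=g(x,\theta)-v(\theta)\le v^g(x)$; combined with $v^g\le\psi$ this forces $\psi(x)=v^g(x)=\psi^{gg}(x)$, which is assertion (2).

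Assertion (1) then follows at once: for $x\in A(\theta)$ we have, by (2), $g(x,\theta)=v(\theta)+\psi(x)=v(\theta)+v^g(x)$, which is precisely the defining relation for $x\in\partial^g v(\theta)$ in Definition~\ref{def:subdifferential}; and $\partial^g v=\partial^g\psi^g$ since $v=\psi^g$ on $\Theta$.

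For assertion (3) I would show that $\theta\in(\theta_-,\theta_R)$ forces the supremum defining $\psi^g(\theta)$ to be attained. By Lemma~\ref{l:subdiffforward} (whose hypothesis $v=\psi^g$ holds here) $v$ is $g$-subdifferentiable on $(\theta_-,\theta_R)$, so $\partial^g v(\theta)\neq\emptyset$; writing $h(x):=g(x,\theta)-v^g(x)$, the elements of $\partial^g v(\theta)$ are exactly the maximisers of $h$, with $\max h=v(\theta)$. Take a sequence $x_n$ with $g(x_n,\theta)-\psi(x_n)\to v(\theta)$. Since $\psi\ge v^g$, we have $v(\theta)\ge h(x_n)\ge g(x_n,\theta)-\psi(x_n)\to v(\theta)$, so $(x_n)$ is a maximising sequence for $h$ and $\psi(x_n)-v^g(x_n)\to 0$. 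Using monotonicity of $\partial^g v$ (Lemma~\ref{lem:monsubdiff}) to sandwich the maximisers between $\partial^g v(\theta')$ and $\partial^g v(\theta'')$ for $\theta_-<\theta'<\theta<\theta''<\theta_R$, together with the single-crossing property (which makes $h$ fall off strictly to the right of $\partial^g v(\theta'')$ and so rules out a maximising sequence escaping to the right boundary $\xi$), I would confine $(x_n)$ to a compact subset of the interior and extract a subsequence $x_n\to x^*\in\partial^g v(\theta)$. Continuity of $h$ and of $\psi$ then gives $\psi(x^*)=\lim\psi(x_n)=\lim v^g(x_n)=v^g(x^*)$, whence $g(x^*,\theta)-\psi(x^*)=h(x^*)=v(\theta)$ and $x^*\in A(\theta)$.

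The routine work is the two inequalities and their combination in (2); the delicate step is the last paragraph. The genuine obstacle is the coercivity/compactness of the maximising sequence, and this is exactly where $\theta<\theta_R$ must be used, to exclude a maximiser ``at the boundary $\xi$'' and keep the sequence in the interior. A second, more hidden point is that \emph{some} regularity of $\psi$ is indispensable here: a $\psi$ agreeing with $v^g$ except for an upward jump at a single point of $\partial^g v(\theta)$ satisfies $\psi^g=v$ yet has $A(\theta)=\emptyset$, so the argument relies on continuity of $\psi$, which is available since $\psi=\log\phi$ for an increasing convex eigenfunction $\phi$ and is thus continuous on the interior of its domain.
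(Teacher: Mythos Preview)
Your treatment of assertions (1) and (2) coincides with the paper's: both arguments rest on the chain $\psi \ge \psi^{gg} = v^g$ together with the equality $g(\hat{x},\theta)-\psi(\hat{x}) = v(\theta)$ at $\hat{x}\in A(\theta)$, forcing $\psi(\hat{x})=v^g(\hat{x})$ and hence $\hat{x}\in\partial^g v(\theta)$. The paper compresses this into the single display
\[
\psi^g(\theta)=g(\hat{x},\theta)-\psi(\hat{x})\le g(\hat{x},\theta)-v^g(\hat{x})\le v(\theta),
\]
but the content is identical to yours.

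For assertion (3) your route differs from the paper's. The paper does not introduce the auxiliary function $h=g(\cdot,\theta)-v^g$ or a maximising sequence. Instead it fixes a single $\tilde{\theta}\in(\theta,\theta_R)$, sets $\tilde{x}=x^*(\tilde{\theta})$, and uses the Spence--Mirrlees inequality directly on $g(\cdot,\theta)-\psi(\cdot)$ to obtain, for every $x>\tilde{x}$,
\[
\{g(x,\theta)-\psi(x)\}-\{g(\tilde{x},\theta)-\psi(\tilde{x})\}
< \{g(x,\tilde{\theta})-\psi(x)\}-\{g(\tilde{x},\tilde{\theta})-\psi(\tilde{x})\}\le 0,
\]
so that $\sup_{x\ge 0}\{g(x,\theta)-\psi(x)\}=\sup_{0\le x\le \tilde{x}}\{g(x,\theta)-\psi(x)\}$, and then invokes continuity of $x\mapsto g(x,\theta)-\psi(x)$ on the compact $[0,\tilde{x}]$ to get attainment. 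Your argument reaches the same compactness conclusion, but via the detour of showing that the maximising sequence for $g-\psi$ is also maximising for $h=g-v^g$, then confining it by the behaviour of $h$, then passing to a limit and finally using $\psi(x_n)-v^g(x_n)\to 0$ plus continuity of both $\psi$ and $v^g$. This works, but several of the steps are superfluous: once $(x_n)$ is trapped in a compact and $\psi$ is continuous there, $g(x^*,\theta)-\psi(x^*)=\lim\{g(x_n,\theta)-\psi(x_n)\}=v(\theta)$ gives $x^*\in A(\theta)$ directly, without any appeal to $h$ or to continuity of $v^g$. Your closing remark about the indispensability of continuity of $\psi$ is well taken and is exactly what the paper uses (implicitly) when it says ``this last supremum is attained''.
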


\begin{proof}
Note that if $\psi$ is any function,
with $\psi^g = v$ then
$\psi \geq \psi^{gg} = v^g$.

If $\hx \in A(\theta)$ then
\[ \psi^g(\theta) = g(\hx,\theta)- \psi(\hx) \leq g(\hx,\theta)-
v^g(\hx) \leq v(\theta). \]
Hence there is equality throughout, so $\hx \in \partial^g v
(\theta) $ and $\psi(\hx)= v^g(\hx) = \psi^{gg}(\hx)$.

For the final part, suppose $\theta < \theta_R$ and fix
$\tilde{\theta} \in
(\theta,\theta_R)$.
From the Spence-Mirrlees condition, if $x>\tilde{x}:= 
\xstar(\tilde{\theta})$,
\[ g(x,\theta) - g (\tilde{x},\theta) <
        g(x,\tilde{\theta}) - g (\tilde{x},\tilde{\theta}), \]
and hence
\[ \{ g(x,\theta) - \psi(x) \} - \{ g (\tilde{x},\theta) - 
\psi(\tilde{x}) \} 
< \{  g(x,\tilde{\theta}) - \psi(x) \} - \{ g 
(\tilde{x},\tilde{\theta}) - \psi(\tilde{x}) \} \leq 0.
\]
In particular, for $x>\tilde{x}$, $g(x,\theta) - \psi(x) < g 
(\tilde{x},\theta) - \psi(\tilde{x} )$ and
\[ \sup_{x \geq 0} g(x,\theta) - \psi(x) =
\sup_{0 \leq x \leq \tilde{x}} g(x,\theta) - \psi(x) . \]
This last supremum is attained so that $A(\theta)$
is non-empty.

\end{proof}

\subsection{Left extensions}
\label{SS:leftextension}

In the case where $\xminus>0$ and there exists a diffusion consistent 
with $V$ then it is generally possible to construct many diffusions 
consistent with $V$. Typically $V$ contains insufficient information to 
characterise the behaviour of the diffusion near zero.

Suppose that $0<\xminus<\infty$. Recall the definition of the 
straight line $F$ from 
Theorem~\ref{t:suffexist}.

\begin{lemma} \label{l:extendunique}
Suppose that $0<\xminus<\infty$ and that there
exists $X \in \sX_0$ consistent with $V$.

Suppose that $\theta_R>\theta_-$ and that $v^g$ and is 
continuous and differentiable to the right at $\xminus$. Suppose further 
that $\xstar(\theta) > \xminus$ for each $\theta>\theta_-$.

Then, unless either ${v^g(x)}=f(x)$ for some $x \in [0,\xminus)$
or $(v^g)'(\xminus+)=f'(\xminus)$, there are many diffusions 
consistent with $V$.
\end{lemma}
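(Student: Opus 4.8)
The plan is to exhibit an entire one-parameter family of admissible eigenfunctions $\phi$, each satisfying the hypotheses of Theorem~\ref{t:existence} (namely $\phi(0)=1$, $\phi$ increasing and convex, and $(\log\phi)^g=v$ on $\Theta$), no two of which coincide; since by (\ref{eq:speed}) distinct convex $\phi$ induce distinct speed measures, this produces distinct diffusions in $\sX_0$ and hence non-uniqueness. First I would pin down precisely which $\phi$ are admissible. Because $\xstar(\theta)>\xminus$ for every $\theta>\theta_-$, Lemma~\ref{l:psig=v} forces any admissible $\psi=\log\phi$ to agree with $v^g$ on the range of $\xstar$; extending as in Theorem~\ref{t:suffexist} we may take $\phi=e^{v^g}$ on all of $[\xminus,\xplus)$, so in particular $\phi(\xminus)=e^{v^g(\xminus)}$. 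On $[0,\xminus)$ the only remaining constraints are that $\phi$ be convex and increasing with $\phi(0)=1$, that $\psi\geq v^g$ (so that $\psi^g\leq v$), and that $\phi$ glue convexly onto $e^{v^g}$ at $\xminus$, i.e. $\phi'(\xminus-)\leq(e^{v^g})'(\xminus+)$. Thus admissibility reduces to choosing a convex increasing curve on $[0,\xminus]$ joining $(0,1)$ to $(\xminus,e^{v^g(\xminus)})$, lying above $e^{v^g}$ and respecting the slope bound at $\xminus$.

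Such a $\phi$, being convex with these endpoints, automatically lies below the chord $F$; and since $X$ exists, $F$ itself is admissible (this is case~2 of Theorem~\ref{t:suffexist}, and existence of $X$ forces $e^{v^g}\leq F$, i.e. $v^g\leq f$, on $[0,\xminus]$). The two excluded alternatives are exactly the two ways all freedom is lost. If $v^g(x_0)=f(x_0)$ for some $x_0\in[0,\xminus)$, then $e^{v^g}$ meets $F$ at $x_0$; any admissible $\phi$ is squeezed $e^{v^g}\leq\phi\leq F$, so $\phi(x_0)=F(x_0)$, and a convex function lying below the line $F$ yet meeting it at an interior point must coincide with $F$ throughout. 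If instead $(v^g)'(\xminus+)=f'(\xminus)$, then the glue bound equals $F'(\xminus)$; but a convex $\phi\leq F$ with $\phi(\xminus)=F(\xminus)$ has $\phi'(\xminus-)\geq F'(\xminus)$, forcing equality of value and slope at $\xminus$ and hence $\phi=F$ again. When neither holds we have genuine room: $e^{v^g}<F$ strictly on $(0,\xminus)$ and $(e^{v^g})'(\xminus+)>f'(\xminus)e^{v^g(\xminus)}=F'(\xminus)$. I would then produce a second admissible curve by a small convex perturbation of the chord, for instance $\phi_1=F+\e\,x(x-\xminus)$, which dips strictly below $F$ on $(0,\xminus)$, keeps $\phi_1(0)=1$ and $\phi_1(\xminus)=e^{v^g(\xminus)}$, remains convex and (for $\e$ small) increasing, and has $\phi_1'(\xminus-)=F'(\xminus)+\e\xminus$, staying under the slope bound for $\e$ small by the slope slack.

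Given one admissible $\phi_1\neq F$, the whole segment $\phi_t=(1-t)F+t\phi_1$, $t\in[0,1]$, is admissible: each $\phi_t$ is convex, increasing, has the correct endpoints, lies above $e^{v^g}$ (a convex combination of two functions that do), and meets the glue bound (its left slope at $\xminus$ is the corresponding convex combination of two admissible slopes). Distinct $t$ give distinct $\phi_t$, hence via (\ref{eq:speed}) distinct diffusions, which establishes `many'. The main obstacle is verifying that $\phi_1\geq e^{v^g}$ near the endpoint $\xminus$: there $e^{v^g}$ need not be convex on $[0,\xminus)$ and $v^g$ is assumed only right-differentiable at $\xminus$, so the rate at which $e^{v^g}$ climbs to meet $F$ at $\xminus$ is not a priori controlled, and a dip of order $\e$ could in principle cross below $e^{v^g}$. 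This is exactly where the hypotheses earn their keep: continuity and right-differentiability of $v^g$ at $\xminus$, together with $\xstar(\theta)>\xminus$ (so that $\xminus$ is only a limit of optimal levels, never attained), govern the approach of $e^{v^g}$ to $F$ and secure a gap of at least linear order into which the dip fits, while near $0$ the gap $1-e^{v^g(0)}>0$ (no touching at $0$) handles the other end. I would carry out this endpoint estimate carefully, shrinking $\e$ and, if necessary, replacing the symmetric parabola by a convex perturbation whose dip vanishes faster at $\xminus$, so that $\phi_1\geq e^{v^g}$ throughout; this is the only genuinely delicate step.
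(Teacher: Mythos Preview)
Your strategy is exactly the paper's: modify the eigenfunction only on $[0,\xminus]$, characterise the admissible modifications as convex increasing curves from $(0,1)$ to $(\xminus,e^{v^g(\xminus)})$ that stay above $e^{v^g}$ and glue convexly at $\xminus$, note that $F$ is the maximal such curve, and show that when neither exceptional condition holds there is room beneath $F$ for a whole family.

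There are two places where the paper's execution is cleaner than yours. First, on $[\xminus,\infty)$ the paper keeps $\tilde\phi=\varphi_X$, the eigenfunction of the \emph{given} consistent diffusion, rather than replacing it by $e^{v^g}$. This matters: $\varphi_X$ is convex by hypothesis, whereas the lemma does not assume $e^{v^g}$ is convex on $[\xminus,\xplus)$ (that is a hypothesis of Theorem~\ref{t:suffexist}, not of this lemma), so your gluing could fail to be convex if $\xstar$ has jumps there. Second, you only verified $\psi^g\leq v$. The other inequality comes directly once you keep $\varphi_X$: for $\theta\in\Theta$ split $\sup_{x}\{g(x,\theta)-\psi(x)\}$ at $\xminus$; on $\{x\geq\xminus\}$ you have $\psi=\psi_X$ and the supremum there already equals $v(\theta)$ because $\xstar(\theta)\geq\xminus$ (respectively the limsup is over $x\to\xi>\xminus$ when $\theta\geq\theta_R$), while on $\{x<\xminus\}$ the constraint $\psi\geq v^g$ bounds the contribution by $v(\theta)$. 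No upper sandwich against $\log\phi_F$ is needed.

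Your endpoint worry largely evaporates with this change. Both $F$ and $\varphi_X|_{[0,\xminus]}$ satisfy all the constraints (in particular $\varphi_X\geq e^{v^g}$ holds automatically since $\psi_X\geq v^g$), so every convex combination $(1-t)F+t\,\varphi_X$ on $[0,\xminus]$, glued to $\varphi_X$ on $[\xminus,\infty)$, is admissible; no parabolic perturbation or delicate estimate is required. This already yields infinitely many distinct $\tilde\phi$ unless $\varphi_X=F$ on $[0,\xminus]$. In that residual case one does need a direct construction, and the paper is no more explicit than you are. However, the hypothesis $\xstar(\theta)>\xminus$ for all $\theta>\theta_-$ forces $\partial^g v^g(x)=\{\theta_-\}$ on $[0,\xminus]$, so there $e^{v^g(x)}=G(x,\theta_-)/V(\theta_-)$, which is $C^2$ by Assumption~\ref{ass}(a); hence $(e^{v^g})'(\xminus-)$ exists and the approach of $e^{v^g}$ to $F$ at $\xminus$ is controlled, making the `dip' construction go through.
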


\begin{proof}
Let $\phi$ be the log-eigenfunction of a diffusion $X \in \sX_0$ which 
is consistent with $V$ 

If $\theta_- \in \Theta$ then
$v^g(\xminus) = \psi(\xminus)$ by Lemma~\ref{l:psig=v}.
Otherwise the same conclusion holds on taking limits, since the 
convexity of $\phi$ necessitates continuity of $\psi$.

Moreover, taking a sequence $\theta_n \downarrow \theta_-$, and using
$\hat{x}(\theta_n) > \xstar(\theta_n -) > \xminus$
we have
\[ \psi'(\xminus+) = \lim_{\theta_n \downarrow \theta_-} 
\frac{\psi(\hx(\theta_n)) - \psi(\xminus)}{\hx(\theta_n) -\xminus}
= \lim_{\theta_n \downarrow \theta_-}
\frac{v^g(\hx(\theta_n)) - v^g(\xminus)}{\hx(\theta_n) -\xminus}
= (v^g)'(\xminus+) \]

In particular, the conditions on $v^g$ 
translate directly into conditions about $\phi$.

Since  the straight line $F$ is
the largest convex function with
$F(0)=1$ and $F(\xminus) = e^{v^g(\xminus)}$ we must have $\phi \leq F$.

Then if $\phi(x)=F(x)$ for some $x \in (0,\xminus)$
or $\phi'(\xminus+)=F'(\xminus)$, then convexity of $\phi$ guarantees 
$\phi=F$ on $[0,\xminus]$.


Otherwise there is a family of convex, increasing $\tilde{\phi}$ with
$\tilde{\phi}(0)=1$ and such that 
$v^g(x) \leq 
\log \tilde{\phi}(x) \leq F(x)$ for $x<\xminus$ and 
$\tilde{\phi}(x)=\phi(x)$ for $x \geq \xminus$.

For such a $\tilde{\phi}$, then by the arguments of Case 2 of 
Theorem~\ref{t:existence} we have $(\log \phi_F)^g = v$ and then
$v^g \leq \log \tilde{\phi} \leq \phi_F$ implies
$v \geq (\log \tilde{\phi})^g \geq
(\log \phi_F)^g = v$.

Hence each of $\tilde{\phi}$ is the eigenfunction of a diffusion which 
is consistent with $V$.
\end{proof}

\begin{example}
Recall Example \ref{ex:powers}, in which we have $\xminus = 1$, 
$\varphi'(1)=2$ 
and 
$\varphi(1)=2$.
We can extend $\varphi$ to $x \in [0,1)$ by (for example)
drawing the straight line between $(0,1)$ and
$(1,2)$ (so that for $x \leq 1$, $\varphi(x)=1+x$). With this choice
the resulting extended function will be convex, thus defining a 
consistent
diffusion on $\R^+$. Note that any convex extension of 
$\varphi$ (i.e. any function $\hat{\varphi}$
such that $\hat{\varphi}(0)=1$ and $\hat{\varphi}'(0-)=0$,
$\hat{\varphi}(x)=\varphi(x)$ for $x>1$) solves the inverse problem, 
(since necessarily $\hat{\varphi}(x) \geq 2x = e^{v^g(x)}$ on $(0,1)$).  
The most natural choice is, perhaps, $\varphi(x)=1+x^2$ for $x \in (0,1)$.
\end{example}

Our next lemma covers the degenerate case where there is no optimal 
stopping rule, and for all $\theta$ it is never optimal to stop. 
Nevertheless, as Example~\ref{e:tanh} below shows, the theory of 
$u$-convexity as developed in the article still applies.

\begin{lemma}
Suppose $\xminus=\infty$, and that there exists a 
convex increasing function $F$ with $F(0)=1$ and such that
$\log F(x) \geq v^g(x)$ and $\lim_{x \rightarrow \infty} \{ \log F(x) - 
v^g(x) \}=0$.

Suppose that $\lim_{x \rightarrow \infty}
{e^{v^g(x)}}/{x}$ exists in $(0,\infty]$ and write
$\kappa = \lim_{x \rightarrow \infty}
{e^{v^g(x)}}/{x}$.
If $\kappa
<\infty$ then $X$ is the unique diffusion consistent with $V$ 
if and only if $e^{v^g(x')} = 1+\kappa x'$ for some $x' > 0$
or $\lim \sup_{x \uparrow \infty} (1 + \kappa x) - e^{v^g(x)}=0$. 
If  $\kappa=\infty$ then there exist uncountably many diffusions 
consistent with $V$.
\end{lemma}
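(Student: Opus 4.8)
The plan is to reduce the whole question to one about convex functions lying in a fixed ``sandwich''. Write $\phi = e^{v^g}$ and, when $\kappa<\infty$, $L(x)=1+\kappa x$. By Lemma~\ref{l:psig=v} any eigenfunction $\Phi=e^\psi$ of a consistent diffusion satisfies $\Phi\ge\phi$, and since $\xminus=\infty$ it is never optimal to stop, so by Proposition~\ref{p:forward} the value is attained only in the limit $x\to\infty$; consistency therefore reduces to the asymptotic requirement $\log\Phi-v^g\to 0$, i.e.\ $\Phi/\phi\to1$. Together with $\Phi\ge\phi$ and $e^{v^g(x)}/x\to\kappa$ this forces the asymptotic slope of the convex function $\Phi$ to equal $\kappa$, and by convexity $\Phi'\le\kappa$ everywhere, whence $\phi\le\Phi\le L$. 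Conversely, any convex increasing $\Phi$ with $\Phi(0)=1$ and $\phi\le\Phi\le L$ is consistent: since $L/\phi\to1$, the sandwich delivers $\Phi/\phi\to1$ for free, so Case~3 of Theorem~\ref{t:suffexist} applies. In particular $L$ itself is always consistent, and it is the maximal eigenfunction; thus \emph{uniqueness holds precisely when $L$ is the only convex increasing $\Phi$ with $\Phi(0)=1$ and $\phi\le\Phi\le L$.}

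The engine of the proof is a monotonicity observation. For a consistent $\Phi$ put $g=L-\Phi$. Then $g$ is concave (affine minus convex), non-negative (as $\Phi\le L$), and $g(0)=0$; and any concave, non-negative function on $[0,\infty)$ vanishing at the origin is automatically non-decreasing, since $g'(x_1)<0$ would give $g'\le g'(x_1)<0$ on $[x_1,\infty)$ by concavity, forcing $g\to-\infty$. So the candidate eigenfunctions correspond exactly to concave, non-decreasing $g$ with $g(0)=0$ and $0\le g\le\Delta$, where $\Delta:=L-\phi\ge0$, and $\Phi=L$ iff $g\equiv0$. This makes both sufficient conditions transparent. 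If $e^{v^g(x')}=1+\kappa x'$ for some $x'>0$ then $\Delta(x')=0$, so $0\le g(x')\le\Delta(x')=0$; being non-decreasing and non-negative, $g$ vanishes on $[0,x']$, and concavity then propagates $g\equiv0$ to all of $[0,\infty)$. If instead $\limsup_{x\uparrow\infty}(1+\kappa x)-e^{v^g(x)}=0$ then, since $\Delta\ge0$, in fact $\Delta\to0$; as $g$ is non-decreasing, $g(x)\le\lim_{y\to\infty}\Delta(y)=0$ for every $x$. Either way $g\equiv0$, so $\Phi=L$ and $X$ is the unique consistent diffusion.

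For the converse, suppose neither condition holds, so $\Delta>0$ on $(0,\infty)$ and $\Delta$ stays bounded away from $0$ along a sequence $x_n\to\infty$. The plan is to fit a non-trivial concave non-decreasing $g$ beneath $\Delta$ --- concretely a truncated ramp $g=\min(sx,\beta)$ with $\beta$ below the asymptotic floor of $\Delta$ and $s$ small enough that $sx\le\Delta(x)$ on the ramp --- so that $\Phi=L-g$ is a second consistent eigenfunction and $X$ is not unique. When $\kappa=\infty$ the function $\phi$ is superlinear, there is no bounding line at all, and the freedom is enormous: starting from the given $F$ one replaces $F$ by its chord on a short interval $[a_n,a_n+\varepsilon_n]$ around each of a sequence of points where $F$ is strictly convex. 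Each replacement keeps $\Phi$ convex and increasing with $\Phi\ge\phi$ and, with $\varepsilon_n$ chosen small, preserves $\Phi/\phi\to1$; indexing by subsets of the sequence produces uncountably many distinct consistent eigenfunctions.

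The hard part will be making the converse construction rigorous. The monotonicity of $g=L-\Phi$ shows that the genuine obstruction to a second eigenfunction is $\liminf_{x\to\infty}\Delta>0$, since a non-decreasing $g$ can only sit beneath $\Delta$ when $\Delta$ has a positive floor; this matches the stated $\limsup$ condition precisely because $\Delta$ converges in the regime of interest, so that its $\liminf$ and $\limsup$ coincide. One must also control the behaviour of $\Delta$ near the origin --- in the degenerate case $\Delta(0)=\Delta'(0+)=0$ the ramp cannot be launched, as $g(0)=0$ with $g(x)/x\le\Delta(x)/x\to0$ would force $g'(0+)=0$ and hence $g\equiv0$ --- so this boundary analysis, together with verifying $\Phi=L-g$ remains increasing ($g'\le\kappa$), is where the technical care is concentrated.
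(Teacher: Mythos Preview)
For the finite-$\kappa$ case your approach matches the paper's: both identify $L(x)=1+\kappa x$ as the maximal admissible eigenfunction (the largest convex $F$ with $F(0)=1$ and $F(x)/x\to\kappa$) and reduce uniqueness to whether there is room for another convex function between $e^{v^g}$ and $L$. Your monotonicity argument for $g=L-\Phi$ (concave, non-negative, zero at the origin, hence non-decreasing) is a clean way to organise the uniqueness direction and is more explicit than the paper, which simply asserts that if $e^{v^g}$ touches the line or approaches it asymptotically then ``there does not exist any convex function lying between $1+\kappa x$ and $e^{v^g(x)}$''. You are also right to flag the boundary case $\Delta(0)=\Delta'(0+)=0$ for the converse; the paper dispatches that direction with ``it is easy to verify'' and does not confront this either.

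For $\kappa=\infty$ you take a different and unnecessarily fragile route. The paper simply sets $F_\alpha(x)=F(x)+\alpha x$ for $\alpha>0$: each $F_\alpha$ is convex and increasing with $F_\alpha(0)=1$, trivially $F_\alpha\geq F\geq e^{v^g}$, and since $e^{v^g(x)}/x\to\infty$ one has $F_\alpha(x)/e^{v^g(x)}=F(x)/e^{v^g(x)}+\alpha x/e^{v^g(x)}\to 1$, so Case~3 of Theorem~\ref{t:suffexist} applies directly. This delivers an uncountable family in one line. Your chord-replacement construction instead produces $\Phi\leq F$ on the modified intervals, so you must additionally argue that each chord stays above $e^{v^g}$; this is not automatic (nothing rules out $F$ touching $e^{v^g}$ at or near the points $a_n$), and the bookkeeping for convexity at the endpoints and for indexing by subsets of a sequence is heavier than needed. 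Perturbing $F$ \emph{upward} by a linear term, as the paper does, avoids all of these issues.
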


\begin{proof} 
The first case follows similar reasoning as Lemma \ref{l:extendunique} above. 
Note that $x \mapsto  1+\kappa x$ is the largest convex function $F$ on 
$[0,\infty)$ such that $F(0)=1$ and $\lim_{x \rightarrow 
\infty}\frac{F(x)}{x}=\kappa$. 

If $e^{v^g(x')}=1+\kappa x'$ for any $x'> 0$, or if 
$\lim \sup_{x \uparrow \infty} (1 + \kappa x) - e^{v^g(x)}=0$
then 
there does not exist any convex function lying between $1+\kappa x$ 
and $e^{v^g(x)}$ on $[0,\infty)$.
In particular $\phi(x) = 1+\kappa x$ 
is the unique eigenfunction consistent with $V$. 

Conversely, if 
$e^{v^g}$ lies strictly below the straight line $1+\kappa x$, and if  
$\lim \sup_{x \uparrow \infty} (1 + \kappa x) - e^{v^g(x)}>0$ then it is 
easy to 
verify that we can find other increasing convex functions with initial 
value 1, satisfying the same limit 
condition and lying between $e^{v^g}$ and the line. 

In the second case define 
$F_\alpha(x)=F(x)+\alpha x$ for $\alpha >0$. Then since 
$\lim_{x \rightarrow \infty} e^{v^g(x)}/{x}=\infty$ we have
\[\lim_{x \rightarrow \infty} \frac{F_\alpha(x)}{e^{v^g(x)}} 
= \frac{F(x)}{e^{v^g(x)}} = 1\]

Hence $F_\alpha$ is the eigenfunction of another diffusion which is 
consistent with $V$.  We conclude
 that there exist uncountably many consistent diffusions.
\end{proof}

\begin{example} 
\label{e:tanh}
Suppose $g(x,\theta)=x^2+\theta \tanh x$ and 
$v(\theta)=\theta$ on $\Theta = \R^+$. 
For this example we have that $v$ is nowhere $g$-subdifferentiable and 
$\xminus=\infty$.
Then $v^g(x)=x^2$ and each of $\varphi(x)=e^{x^2}$,
\[\bar{\varphi}(x) = \left\{ \begin{array}{ll} 
1+(e-1)x & 0 \leq x < 1 \\
e^{x^2} & 1 \leq x,  
\end{array} \right. \]
and $\varphi_\alpha(x)=\varphi(x)+ \alpha x$ for any $\alpha \in 
\R_+$ is an eigenfunction consistent with $V$. 
\end{example}

\subsection{Right extensions}
\label{SS:rightextension}

The case of $\xplus<\infty$ is very similar to the case $\xminus>0$, and 
typically if there exists one diffusion $X \in \sX_0$ which is 
consistent with $V$, 
then there exist many such diffusions. Given $X$ consistent with 
$V$, the idea is to produce 
modifications of the eigenfunction $\varphi_X$ which agree with 
$\varphi_X$ on 
$[0,\xplus]$, but 
which are different on $(\xplus,\infty)$.

\begin{lemma}
\label{l:extendR}
Suppose $\xplus<\infty$. Suppose there exists 
a diffusion $X \in \sX_0$ such that $V_X$ agrees with $V$ on $\Theta$.
If $v^g(\xplus) + (v^g)'(\xplus+) < 
\infty$ then there are infinitely many 
diffusions in $\sX_0$ which are consistent with $V$. 

\end{lemma}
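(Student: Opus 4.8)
The plan is to mirror the left-extension argument of Lemma~\ref{l:extendunique}, but now exploiting the freedom in $\varphi_X$ to the right of $\xplus$. Throughout let $\varphi = \varphi_X$ be the eigenfunction of the given consistent diffusion and $\psi = \log \varphi$, so that $\psi^g = v$ on $\Theta$. First I would record the two facts that pin $\varphi$ down only up to $\xplus$. Since $\psi^g = v$ we have $\psi \geq \psi^{gg} = v^g$ everywhere, and by Lemma~\ref{l:psig=v}, on the set where $v$ is $g$-subdifferentiable $\psi$ agrees with $v^g$ at each optimal stopping level. By the Spence--Mirrlees monotonicity (Lemma~\ref{lem:monsubdiff}) the map $\xstar$ is increasing, so for every $\theta \in \Theta$ the maximiser in $v(\theta)=\sup_x\{g(x,\theta)-\psi(x)\}$ may be taken to satisfy $\xstar(\theta) \leq \xplus$. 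Consequently $v(\theta)=\sup_{x \le \xplus}\{g(x,\theta)-\psi(x)\}$ for all $\theta \in \Theta$, and in particular $\psi(\xplus)=v^g(\xplus)$. This is the structural input: the data $\{V(\theta):\theta\in\Theta\}$ constrain $\varphi$ only on $[0,\xplus]$, while for $x \ge \xplus$ the lower bound $v^g$ reduces (on taking the supremum over $\theta\in\Theta$ at the right endpoint) to $v^g(x)=g(x,\theta_+)-v(\theta_+)$, i.e. $e^{v^g(x)}=G(x,\theta_+)/V(\theta_+)$.

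Next I would use the hypothesis $v^g(\xplus)+(v^g)'(\xplus+)<\infty$ to guarantee that $\xplus$ is a genuine interior point of the region on which $\varphi$ is finite and has finite slope: $\varphi(\xplus)=e^{v^g(\xplus)}<\infty$, and the one-sided derivative $\varphi'(\xplus+)$ is finite. This is what provides room to continue $\varphi$ beyond $\xplus$ in more than one way. I would then construct an infinite family of candidate eigenfunctions by leaving $\varphi$ unchanged on $[0,\xplus]$ and replacing it on $(\xplus,\infty)$ by distinct convex, increasing continuations $\tilde\varphi$ that agree with $\varphi$ at $\xplus$ (value matched, and left-derivative matched or exceeded so that convexity is preserved across $\xplus$) and satisfy $\log\tilde\varphi \geq v^g$ on $(\xplus,\infty)$. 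A convenient parametrised family is obtained by pushing $\varphi$ upward, for instance $\tilde\varphi=\varphi+\eta$ with $\eta$ any nonnegative, convex, increasing function vanishing on $[0,\xplus]$; each such $\tilde\varphi$ is convex and increasing with $\tilde\varphi(0)=1$ and $\tilde\varphi \geq \varphi \geq e^{v^g}$.

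It then remains to verify that every member of the family solves the inverse problem. Writing $\tilde\psi=\log\tilde\varphi$, the inequality $\tilde\psi \geq v^g$ gives $\tilde\psi^g \leq v^{gg}=v$; and since $\tilde\psi \equiv \psi$ on $[0,\xplus]$ while the maximiser lies in $[0,\xplus]$, we get $\tilde\psi^g(\theta) \geq \sup_{x \le \xplus}\{g(x,\theta)-\psi(x)\}=v(\theta)$ for all $\theta\in\Theta$. Hence $\tilde\psi^g=v$ on $\Theta$, so by Theorem~\ref{t:existence} each $\tilde\varphi$ is the eigenfunction of a consistent diffusion in $\sX_0$, with speed measure recovered from $m(dx)=\tilde\varphi''(x)/(2\rho\,\tilde\varphi(x))\,dx$; distinct continuations give distinct speed measures and hence distinct diffusions, so there are infinitely many. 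The main obstacle, and the place where the finiteness hypothesis does real work, is the inequality $\log\tilde\varphi \geq v^g$ on $(\xplus,\infty)$, i.e. ensuring the continuation never creates a new maximiser $x>\xplus$ with $g(x,\theta)-\tilde\psi(x)>v(\theta)$ for some $\theta\in\Theta$: if $v^g(\xplus)$ were infinite there would be no region beyond $\xplus$ to modify at all, while if $(v^g)'(\xplus+)$ were infinite the lower bounding curve $e^{v^g}=G(\cdot,\theta_+)/V(\theta_+)$ would present a vertical tangent at $\xplus$, forcing the convex continuation and collapsing the family.
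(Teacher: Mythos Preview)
Your proposal is correct and follows exactly the route the paper indicates, namely to mirror the left-extension argument of Lemma~\ref{l:extendunique}; the paper's own proof consists of little more than that remark. One small correction: the hypothesis bounds $(v^g)'(\xplus+)$, not $\varphi'(\xplus+)$, so the given $\varphi$ may itself equal $+\infty$ on $(\xplus,\infty)$ (cf.\ the remark following the lemma), in which case your additive family $\tilde\varphi=\varphi+\eta$ degenerates---but your more general description, replacing $\varphi$ on $(\xplus,\infty)$ by distinct convex continuations dominating $e^{v^g}$, still works, using the finiteness of $\varphi'(\xplus-)$ and of $(e^{v^g})'(\xplus+)$.
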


\begin{proof} 
It is sufficient to prove that given convex increasing $\hat\phi$ 
defined on 
$[0,\xplus)$
with $\hat\phi(0)=1$ and $(\log \hat\phi)^g = v$ on $\Theta$, then there 
are many increasing, convex $\phi$ with
defined on $[0,\infty)$ with $\phi(0)=1$ for which $(\log \phi)^g=v$.

The proof is similar to that of Lemma~\ref{l:extendunique}.
\end{proof}

\begin{example}\label{ex:david}
Let $G(x,\theta)=
{\theta x}/{(\theta + x)}$, and 
$\Theta=(0,\infty)$. 

Consider the forward problem when $X$ is a 
reflecting Brownian motion, so that the eigenfunction is given by 
$\varphi(x) =  \cosh (x \sqrt{2 \rho})$. Suppose $\rho=1/2$.

Then $\{g(x,\theta)-\log(\cosh x)\}$ attains its maximum at the solution 
$x=\xstar(\theta)$ to
\begin{equation}\label{eq:davids}
\theta = \frac{x^2 \tanh x}{1 - x \tanh x}.
\end{equation}
It follows that $\xminus=0$ but $\xplus = \lim_{\theta \uparrow \infty}
\xstar(\theta) = \hat{\lambda}$ where $\hat\lambda$ is the positive root 
of ${\mathcal L}(\lambda)=0$ and
${\mathcal L}(\lambda) = 1 - \lambda  \tanh \lambda$.

Now consider an inverse problem.
Let $G$ and $\Theta$ be as above, and suppose $\rho=1/2$.
Let $\xstar(\theta)$ be the solution to (\ref{eq:davids}) and let 
$v(\theta) = g(\xstar(\theta),\theta)-\log(\cosh \xstar(\theta))$.
Then the diffusion with speed measure $m(dx)=dx$ (reflecting Brownian 
motion), is an element of $\sX_0$ which is consistent with 
$\{V(\theta) : \theta \in (0,\infty) \}$. However, this solution of the 
inverse problem is not 
unique, and any convex function $\varphi$ with $\varphi(x) = \cosh x$ 
for $x \leq \hat{\lambda}$ is the eigenfunction of a consistent 
diffusion. To see this note that for $x > \xplus$, $v^g(x) = 
\lim_{\theta \uparrow \infty} \{ g(x,\theta) - v(\theta) \} = \log (x 
\cosh (\xplus)/\xplus)$ so that any convex $\varphi$ with $\varphi(x)= 
\cosh x$ for $x \leq \xplus$ satisfies $\varphi \geq e^{v^g}$.
\end{example}

\begin{remark}
If $\xplus + v^g(\xplus) + (v^g)'(\xplus+) < \infty$ then one 
admissible 
choice is to take 
$\phi=\infty$. This was the implicit choice in the proof of 
Theorem~\ref{t:existence}.
\end{remark}

\begin{example}
The following example is `dual' to Example~\ref{ex:BM1a}.

Suppose $\rho=1/2$, $g(x,\theta)= \theta x$, $\Theta = (0,\infty)$ and $v(\theta)= 
\log (\cosh \theta)$. Then $v^g(x) = x \tanh^{-1}(x) + \frac{1}{2} 
\log (1 - x^2)$, for $x\leq 1$. For $x>1$ we have that $v^g$ is 
infinite. Since $v$ is convex, and $g$-duality is 
convex 
duality, we conclude that $v$ is $g$-convex. Moreover,
$v^g$ is convex. Setting $\psi = v^g$ we have that $\psi(0)=0$, $\varphi 
= e^\psi$ is convex and $\psi^g = v^{gg}=v$. Hence $\psi$ is associated 
with a diffusion consistent with $V$, and this diffusion has an 
absorbing boundary  
at $\xi \equiv 1$.

For this example we have $\xplus=1$ and $v^g(\xplus)=\log 2$, but the 
left-derivative of $v^g$ is infinite at $\xplus$ and $v^g$ is infinite 
to the right of $\xplus$. Thus there is 
a unique 
diffusion in $\sX_0$ which is consistent with $V$.
\end{example}

\subsection{Non-Uniqueness on $[\xminus,\xplus)$}
\label{SS:middleextension}

Even if $[\xminus,\xplus)$ is the positive real line, then if 
$x^*(\theta)$ fails to be continuous it is possible that there are 
multiple diffusions consistent with $V$.

\begin{lemma} \label{l:uniqueness} 
Suppose there exists a diffusion $X \in \sX_0$ which is consistent with 
$\{V(\theta):\theta \in \Theta \}$.

Suppose the $g$-subdifferential of $v$ is multivalued, or more generally 
that $x^*(\theta)$ is not continuous on $\Theta$. Then
there exists an interval $I \subset (\xminus,\xplus)$ where the 
$g$-subdifferential 
of $\psi = v^g$ is constant, so that $\theta^*(x) = \bar{\theta}, \; 
\forall x \in I$. 
If $G(x, \bar{\theta}) = e^{g(x, \bar{\theta})}$ is strictly convex in 
$x$ on some subinterval of $I_0$ of $I$ then the diffusion $X$
is not the unique element of  
$\sX_0$ which is consistent with $V$.
\end{lemma}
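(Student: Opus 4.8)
The plan is to first use the duality between the two $g$-subdifferentials to locate the interval $I$ and establish the constancy of $\theta^*$, and then to exploit the freedom in choosing a convex eigenfunction over $I$ to build a second consistent diffusion.

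First I would observe that, since $v(\theta)+\psi(x)=g(x,\theta)$ is the common Young equality characterising both $\partial^g v$ and $\partial^g\psi$ (where $\psi=v^g$), one has $x\in\partial^g v(\theta)\Leftrightarrow\theta\in\partial^g\psi(x)$. A discontinuity of the non-decreasing map $\xstar$ at an interior point $\bar\theta$ produces a jump $x_l:=\xstar(\bar\theta-)<\xstar(\bar\theta+)=:x_r$, and monotonicity (Lemma~\ref{lem:monsubdiff}) forces $[x_l,x_r]\subseteq\partial^g v(\bar\theta)$. Setting $I=(x_l,x_r)\subseteq(\xminus,\xplus)$, every $x\in I$ then satisfies $\bar\theta\in\partial^g\psi(x)$; applying Lemma~\ref{lem:monsubdiff} to $\psi$ in the variable $x$ squeezes $\partial^g\psi(x)$ down to the single value $\bar\theta$ (elements left of $x$ bound it below, elements right of $x$ bound it above), so $\theta^*(x)=\bar\theta$ on $I$, which is the first assertion. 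The case of a multivalued $\partial^g v(\theta)$ is the same phenomenon read in the other variable.

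Next I would pin down $\psi$. By Lemma~\ref{l:psig=v}, on the range of $\xstar$, that is on $(\xminus,\xplus)\setminus I$, every consistent log-eigenfunction must agree with $v^g$; moreover any such $\psi$ satisfies $\psi\ge v^g$ everywhere, so $\varphi_X\ge e^{v^g}$. On $I$, Proposition~\ref{p:prop1} applied to $\psi$ gives $\psi'(x)=g_x(x,\theta^*(x))=g_x(x,\bar\theta)$ a.e., and integrating from $x_l$, where $\psi(x_l)=v^g(x_l)=g(x_l,\bar\theta)-v(\bar\theta)$ by the Young equality, yields $v^g(x)=g(x,\bar\theta)-v(\bar\theta)$, i.e. $e^{v^g(x)}=G(x,\bar\theta)/V(\bar\theta)$ on $\overline{I}$. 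Thus $e^{v^g}$ is a positive multiple of $G(\cdot,\bar\theta)$ on $I$, while at the endpoints any consistent $\varphi$ is forced to take the values $e^{v^g(x_l)},e^{v^g(x_r)}$. The non-uniqueness then follows by constructing a competitor: let $L$ denote the chord joining $(x_l,e^{v^g(x_l)})$ and $(x_r,e^{v^g(x_r)})$, and consider convex, increasing $\tilde\varphi$ equal to $\varphi_X$ off $I$ and satisfying $e^{v^g}\le\tilde\varphi\le L$ on $I$. The original $\varphi_X$ is one such function; any such $\tilde\varphi$ is again consistent, since writing $\tilde\psi=\log\tilde\varphi\ge v^g$ gives $\tilde\psi^g\le v^{gg}=v$, whereas for $\theta\neq\bar\theta$ the maximiser of $g(\cdot,\theta)-\tilde\psi$ lies outside $I$ (where $\tilde\psi=v^g$) and for $\theta=\bar\theta$ the supremum is already attained at $x_l$, giving $\tilde\psi^g\ge v$. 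Hence $\tilde\psi^g=v$, and (\ref{eq:speed}) produces a diffusion in $\sX_0$ consistent with $V$. If $G(\cdot,\bar\theta)$, and therefore $e^{v^g}$, is strictly convex on a subinterval $I_0\subset I$, then $e^{v^g}$ lies strictly below its chord $L$ on $I_0$, so this family of admissible interpolants contains more than one member, and the consistent diffusion is not unique.

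The step I expect to be the main obstacle is the consistency verification for $\tilde\varphi$: one must check simultaneously that raising $\psi$ on $I$ neither creates a spurious maximiser for some $\theta\neq\bar\theta$ nor destroys the attainment of $v(\bar\theta)$, and that the interpolant can be chosen with one-sided slopes at $x_l$ and $x_r$ compatible with global convexity; here the finiteness of the slopes of $v^g$ on the interior $(\xminus,\xplus)$ is what guarantees the requisite room. By comparison, the location of $I$ and the constancy of $\theta^*$ are routine consequences of Lemma~\ref{lem:monsubdiff}.
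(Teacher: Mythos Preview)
Your proposal is correct and follows essentially the same route as the paper: locate $I$ via the duality of $g$-subdifferentials, recognise from the Young equality that $e^{v^g}=G(\cdot,\bar\theta)/V(\bar\theta)$ on $I$, replace the eigenfunction on that interval by a convex interpolant, and verify $\tilde\psi^g=v$ by splitting the supremum into the modified interval and its complement. The only notable difference is in the construction of the competitor: the paper linearises $\varphi_X$ on the subinterval $I_0$ where $G(\cdot,\bar\theta)$ is strictly convex and keeps $\varphi_X$ elsewhere, whereas you take the chord over the full jump interval $I$ and consider a whole family of interpolants between $e^{v^g}$ and that chord. The paper's localised choice makes the step ``$\hat\varphi\neq\varphi_X$'' immediate (strictly convex versus affine on $I_0$) and avoids the endpoint-slope compatibility issue you flag as the main obstacle, since gluing a chord back onto the convex $\varphi_X$ at interior points of $I$ is automatic; your version, on the other hand, exhibits the full indeterminacy of the consistent eigenfunction over $I$.
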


\begin{proof}
First note that if $\xstar(\theta)$, is 
continuous then $\thstar=\xstar^{-1}$ is nowhere constant  and hence 
strictly monotone and thus $\psi$=$v^g$ is strictly $g$-convex (recall \ref{d:310}).

Suppose $\bar{G}(x) := G(x, \bart)$ is strictly convex on $I_0  
\subseteq I$. Then we can choose $\hat{G}$ such that \\
$\bullet$ $\hat{G} = \bar{G}$ on $I_0^c$, \\
$\bullet$ $\hat{G}$ is linear on $I_0$, \\
$\bullet$ $\hat{G}$ is continuous. \\
Then $\hat{G}(x) \geq G(x, \bart)$.

By definition we have 
\[ \psi(x) = g(x, \bart) - \psi^g(\bart) \hspace{10mm} x \in I . \]
Then $\varphi_X(x) = G(x,\bart)/V(\bart)$ on $I$, see Figure 2. 

Let $\hat{\varphi}$ be given by
\[
\hat{\varphi}(x) = \left\{ \begin{array}{ll}
        \varphi_X(x) & \mbox{ on $I_0^c$ }, \\
        \frac{\hat{G}(x)}{V(\bart)} &  \mbox{ on $I_0$ } 
            \end{array} \right. \]
Then $\hat{\varphi}$ is convex and $\hat{\varphi} \geq \varphi$, so that 
they are associated with different elements of $\sX_0$. Let $\hat{\psi} = 
\ln \hat{\varphi}$.

It remains to show that $\hat{v}:=\hat{\psi}^g = \psi^g=v$. It is 
immediate from $\hat{\psi} \geq \psi$
that $\hat{\psi}^g \leq \psi^g$. For the converse, observe that

\begin{eqnarray*}
v(\theta) & = & \left( \sup_{x \in I_0} \{ g(x,\theta) - \psi(x) \} 
\right) \vee \left( \sup_{x \in I_0^c} \{ g(x,\theta) - \psi(x) \} 
\right) \\
& = & \sup_{x \in I_0^c} \{ g(x,\theta) - \psi(x) \}  \\
& = & \sup_{x \in I_0^c} \{ g(x,\theta) - \hat{\psi}(x) \}  \\
& \leq & \hat{v}(\theta).
\end{eqnarray*}

\end{proof}

\begin{figure}[t]\label{Fig.2}
\begin{center}
\includegraphics[height=8cm,width=8cm]{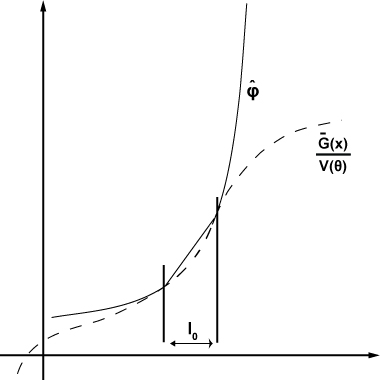}
\caption{The dashed line represents a function 
$e^{g(x,\theta)+c}$ for some $\theta \in \Theta$ and some $c$. $\varphi$
is given by the solid line on $I_0^c$ and the dashed line on $I_0$. Then 
$\psi = \log \phi$ 
has a $g$-section 
over $I_0$ and $G$ is convex there. $\hat{\varphi}$, given by the solid 
line is another 
eigenfunction consistent with $V$.}
\end{center}
\end{figure}

\begin{example}
Suppose $G(x,\theta)=e^{\theta x}$, and $\Theta = (0,\infty)$. Suppose 
that $X$ is such that $\psi$ 
is given by 
\[ \psi(x) = \left\{ \begin{array}{ll} 
\frac{x^2}{4} & x < 2 \\
(x-1)         & 2 \leq x < 3 \\
\frac{x^2}{6} + \frac{1}{2} & 3 \leq x
\end{array} \right. \]
It follows that
\[ v(\theta) = \left\{ \begin{array}{ll}
{\theta^2} & \theta \leq 1 \\
\frac{3 \theta^2}{2} - \frac{1}{2} & 1 < \theta
\end{array} \right. \]

Then $\partial^g v(1)$ is multivalued, and there are a family of 
diffusions $\tilde{X} \in \sX_0$ which give the same value functions as 
$X$.

In particular we can take
\[ \hat{\psi}(x) = \left\{ \begin{array}{ll}
\frac{x^2}{4} & x < 2 \\
\log ((e^2-e)x + 3e - 2e^2)         & 2 \leq x < 3 \\
\frac{x^2}{6} + \frac{1}{2} & 3 \leq x
\end{array} \right. \]
Then $\hat{\psi}^g = v(\theta)$ and $\hat{\psi}$ is a log-eigenfunction.

\end{example}

\subsection{Uniqueness of diffusions consistent with V} 
\label{SS:uniqueness}

\begin{proposition} \label{p:totalunique} 
Suppose $V$ is such that $\xstar(\theta)$ is continuous on $\Theta$, 
with range the positive real line.

Then there exists at most one diffusion $X \in \sX_0$ consistent with 
$V$.
\end{proposition}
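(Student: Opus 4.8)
The plan is to show that any diffusion $X \in \sX_0$ consistent with $V$ must have log-eigenfunction $\psi_X = v^g$ on $(0,\infty)$; since an element of $\sX_0$ is recovered from $\varphi_X = e^{\psi_X}$ through the speed measure (\ref{eq:differential}), this forces uniqueness. So I would fix an arbitrary consistent $X$ and write $\psi = \log \varphi_X$, noting $\psi^g = v$ on $\Theta$ and hence the general inequality $\psi \geq \psi^{gg} = v^g$. Two features of the hypotheses are then recorded: continuity of $\xstar$ makes $\partial^g v$ single-valued, $\partial^g v(\theta) = \{ \xstar(\theta) \}$, while finiteness of $\xstar(\theta)$ for every $\theta$ means $v$ is $g$-subdifferentiable throughout $\Theta$, i.e. $\theta_R = \theta_+$.

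The heart of the argument is to upgrade $\psi \geq v^g$ to equality on the range of $\xstar$. For each $\theta \in (\theta_-, \theta_R) = (\theta_-, \theta_+)$, Lemma~\ref{l:psig=v} guarantees that the maximiser set $A(\theta) = \{ x : g(x,\theta) - \psi(x) = \psi^g(\theta) \}$ is non-empty and satisfies $A(\theta) \subseteq \partial^g v(\theta) = \{ \xstar(\theta) \}$; thus $A(\theta) = \{ \xstar(\theta) \}$ and, by the same lemma, $\psi(\xstar(\theta)) = v^g(\xstar(\theta))$. Because $\xstar$ is continuous and increasing with range all of $(0,\infty)$, every point of $(0,\infty)$ is of the form $\xstar(\theta)$ for some interior $\theta$, so $\psi = v^g$ on $(0,\infty)$ (finiteness of $v^g$ along the range also shows the right boundary is $\xi = \infty$). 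As $\psi$ is continuous, being the logarithm of a convex function, this determines $\psi$, and therefore $\varphi_X = e^{v^g}$ with $\varphi_X(0) = 1$, uniquely; reading the speed measure off (\ref{eq:differential}) identifies $X$.

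I expect the main obstacle to be the identity $A(\theta) = \{ \xstar(\theta) \}$. Knowing that $\xstar(\theta)$ is the optimal stopping level for the dual $v$ does not by itself show that $\xstar(\theta)$ maximises $g(\cdot,\theta) - \psi$ for the particular candidate $\psi$, and the available inequality $\psi \geq v^g$ points the wrong way. It is precisely the non-emptiness of $A(\theta)$ supplied by Lemma~\ref{l:psig=v}, together with the single-valuedness of $\partial^g v$ coming from continuity of $\xstar$, that bridges this gap. Without continuity one would recover $\psi = v^g$ only off the (possibly substantial) set of levels over which $\xstar$ jumps, and this is exactly the freedom that produces non-uniqueness in Lemma~\ref{l:uniqueness}.
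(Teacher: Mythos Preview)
Your proposal is correct and follows exactly the paper's approach: both arguments invoke Lemma~\ref{l:psig=v} to deduce that $A(\theta)=\{\xstar(\theta)\}$ and hence $\psi(\xstar(\theta))=v^g(\xstar(\theta))$, and then use surjectivity of $\xstar$ onto $(0,\infty)$ to conclude $\psi=v^g$. If anything, your write-up is more careful than the paper's in spelling out why $\partial^g v(\theta)$ is a singleton and why $\theta_R=\theta_+$.
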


\begin{proof}
The idea is to show that
${v^g(x)}$ is the only function with $g$-convex dual $v$.
Suppose $\psi$ is such that $\psi^g=v$ on $\Theta$.
For each $x$ there is a $\theta$ with $\xstar(\theta)=x$, and
moreover $\partial^g v(\theta) = \{x \}$. Then by Lemma~\ref{l:psig=v},
$A(\theta) = \{x \}$ and $\psi(x)= v^g(x)$.
\end{proof}

Recall that we define $\theta_R = \sup_{\theta} \partial^g 
v(\theta) \neq \emptyset$ and if $\theta_R>\theta_-$ set $x_R =
\lim_{\theta \uparrow \theta_R} \xstar(\theta)$.

\begin{theorem} \label{t:totalunique}
Suppose $V$ is such that $v$ is continuously differentiable on 
$(\theta_-,\theta_R)$ and that $\xminus=0$ and $x_R = \infty$.

Then there exists at most one diffusion $X \in \sX_0$ consistent with
$V$.
\end{theorem}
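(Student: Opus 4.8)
The plan is to reduce Theorem~\ref{t:totalunique} to Proposition~\ref{p:totalunique} by showing that the stated hypotheses force $\xstar$ to be a continuous, non-decreasing surjection from $(\theta_-,\theta_R)$ onto $(0,\infty)$, after which the uniqueness argument of Proposition~\ref{p:totalunique} applies essentially verbatim.

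First I would observe that, by Lemma~\ref{l:subdiffforward}, $v$ is $g$-subdifferentiable throughout $(\theta_-,\theta_R)$, hence $g$-subdifferentiable in a neighbourhood of each interior point, so that Proposition~\ref{p:prop2} is applicable there. Since $v$ is assumed continuously differentiable on $(\theta_-,\theta_R)$, Proposition~\ref{p:prop2} gives that $\xstar$ is continuous on $(\theta_-,\theta_R)$. Next I would upgrade continuity to single-valuedness of the subdifferential: for any $x \in \partial^g v(\theta)$ the smooth map $\theta' \mapsto g(x,\theta') - \psi(x)$ is a minorant of $v$ touching it from below at the interior point $\theta$, so differentiability of $v$ forces $v'(\theta) = g_\theta(x,\theta)$; as the Spence-Mirrlees condition in Assumption~\ref{ass} makes $g_\theta(\cdot,\theta)$ strictly increasing, this equation has a unique solution in $x$, whence $\partial^g v(\theta) = \{\xstar(\theta)\}$ for every $\theta \in (\theta_-,\theta_R)$.

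I would then pin down the range of $\xstar$. By Lemma~\ref{lem:monsubdiff}, $\xstar$ is non-decreasing; by the hypotheses and the definitions of $\xminus$ and $x_R$ we have $\lim_{\theta\downarrow\theta_-}\xstar(\theta) = \xminus = 0$ and $\lim_{\theta\uparrow\theta_R}\xstar(\theta) = x_R = \infty$. A continuous, non-decreasing function with these endpoint limits is, by the intermediate value theorem, surjective onto $(0,\infty)$, so every $x>0$ equals $\xstar(\theta)$ for some $\theta \in (\theta_-,\theta_R)$.

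Finally I would run the uniqueness argument. Let $\psi$ be the log-eigenfunction of any diffusion $X \in \sX_0$ consistent with $V$, so $\psi^g = v$ on $\Theta$. For each $\theta \in (\theta_-,\theta_R)$, Lemma~\ref{l:psig=v} gives that $A(\theta)$ is non-empty, $A(\theta) \subseteq \partial^g v(\theta) = \{\xstar(\theta)\}$, and $\psi = v^g$ on $A(\theta)$; hence $\psi(\xstar(\theta)) = v^g(\xstar(\theta))$. By surjectivity this forces $\psi(x) = v^g(x)$ for all $x > 0$, and continuity of $\varphi = e^{\psi}$ together with $\varphi(0)=1$ fixes the value at $0$. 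Since an element of $\sX_0$ is determined by its increasing convex eigenfunction, there is at most one consistent diffusion, namely the one with eigenfunction $e^{v^g}$. I expect the main obstacle to be the second step: passing from continuous differentiability of $v$ to genuine single-valuedness of $\partial^g v$. This is what guarantees that $A(\theta)$ pins down $\psi$ at the single point $\xstar(\theta)$ rather than on an interval; the endpoint conditions $\xminus=0$ and $x_R=\infty$ only control the range of $\xstar$, and without single-valuedness a consistent $\psi$ could differ from $v^g$ by a convex interpolation over an interval, exactly the non-uniqueness phenomenon isolated in Lemma~\ref{l:uniqueness}.
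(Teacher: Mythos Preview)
Your proposal is correct and follows essentially the same route as the paper: reduce to Proposition~\ref{p:totalunique} by using Proposition~\ref{p:prop2} to deduce continuity of $\xstar$ from continuous differentiability of $v$, then use $\xminus=0$ and $x_R=\infty$ for the range. The single-valuedness step you worry about is exactly the argument the paper gives inside the proof of Proposition~\ref{p:prop1} (differentiability of $v$ at $\theta$ plus strict monotonicity of $g_\theta(\cdot,\theta)$ forces $\partial^g v(\theta)$ to be a singleton), so you have simply made explicit what the paper leaves implicit.
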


\begin{proof}
The condition on the range of $\xstar(\theta)$ translates into the 
conditions on $\xminus$ and $x_R$, so
it is sufficient to show that $\xstar(\theta)$ is continuous at 
$\theta \in (\theta_-,\theta_R)$ if and only if $v$ is differentiable 
there. This follows from Proposition~\ref{p:prop2}.
\end{proof}

\begin{corollary}
If the conditions of the Theorem hold but either $e^{v^g(x)}$
is not convex or $v^g(0) \neq 0$, then there is no
diffusion $X \in \sX_0$ which is consistent with $\{ V(\theta), \theta
\in \Theta \}$.
\end{corollary}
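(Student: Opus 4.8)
The plan is to prove the contrapositive of the existence statement: rather than exhibiting non-existence directly, I would show that \emph{if} a consistent diffusion existed, then $e^{v^g}$ would necessarily be convex with $v^g(0)=0$. The engine is Theorem~\ref{t:existence}, which reduces the existence of a consistent $X \in \sX_0$ to the existence of an increasing, convex $\phi:[0,\infty) \to [1,\infty]$ with $\phi(0)=1$ and $(\log \phi)^g = v$ on $\Theta$. Writing $\psi = \log \phi$, the task is to show that under the hypotheses of Theorem~\ref{t:totalunique} the only possible candidate for $\psi$ is $v^g$ itself, so that $\phi = e^{v^g}$ is forced; the two nontrivial requirements on $\phi$ (convexity and $\phi(0)=1$) then translate verbatim into the two conditions in the corollary, namely $e^{v^g}$ convex and $v^g(0)=0$.

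First I would record that the hypotheses of Theorem~\ref{t:totalunique} make $\xstar$ a continuous, non-decreasing map of $(\theta_-,\theta_R)$ onto $(0,\infty)$: continuity is Proposition~\ref{p:prop2} applied to the continuously differentiable $v$, monotonicity is the Spence--Mirrlees property, and the two endpoint limits are exactly $\xminus=0$ and $x_R=\infty$. Hence every $x>0$ equals $\xstar(\theta)$ for some $\theta \in (\theta_-,\theta_R)$, and by continuity $\partial^g v(\theta)=\{x\}$ is single valued there. Given any $\psi$ with $\psi^g = v$ on $\Theta$, Lemma~\ref{l:psig=v} then forces $x \in A(\theta)$ and $\psi(x)=v^g(x)$; letting $x$ range over $(0,\infty)$ gives $\psi = v^g$, equivalently $\phi = e^{v^g}$, throughout $(0,\infty)$. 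This is precisely the uniqueness mechanism of Proposition~\ref{p:totalunique}, now used to pin down the \emph{shape} of the only admissible $\phi$.

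It then remains to confront the candidate $e^{v^g}$ with the structural demands of Theorem~\ref{t:existence}. Monotonicity is automatic, since $g$ increasing in $x$ forces $v^g$, and hence $e^{v^g}$, to be increasing; this is why the corollary need only list convexity and $v^g(0)=0$. If $e^{v^g}$ fails to be convex, then no increasing convex $\phi$ agreeing with it on $(0,\infty)$ can exist, so by Theorem~\ref{t:existence} there is no consistent diffusion. If instead $v^g(0)\neq 0$, then the normalisation $\phi(0)=1$, combined with the boundary continuity enforced by convexity of $\phi$, is incompatible with $\phi=e^{v^g}$ near $0$, and again no consistent diffusion exists. I expect the one delicate point to be exactly this behaviour at $x=0$: the identity $\psi=v^g$ is obtained only on the open interval $(0,\infty)$ (since $0$ enters $\partial^g v$ only in the limit $\theta \downarrow \theta_-$), so closing the argument at the boundary relies on the continuity there of the convex function $\phi$ and of the $g$-convex function $v^g$, matching $\lim_{x \downarrow 0} e^{v^g(x)}$ against the required value $1$.
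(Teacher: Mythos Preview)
Your proposal is correct and follows exactly the argument the paper intends: the corollary is stated without proof precisely because it is immediate from the mechanism in the proof of Proposition~\ref{p:totalunique} (any $\psi$ with $\psi^g=v$ must coincide with $v^g$ on the range of $\xstar$), combined with the necessary conditions in Theorem~\ref{t:existence}. Your attention to the boundary behaviour at $x=0$ is more careful than the paper itself, which simply takes for granted that $\phi(0)=1$ forces $v^g(0)=0$.
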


\begin{example}
\label{ex:concave2}
Recall Example~\ref{ex:concave}. For this example we have
$\xstar(\theta)= \theta^2 -1$,
which on $\Theta=(1,\infty)$ is continuous and strictly increasing.
Then $e^{v^g(x)} = \sqrt{1+x}$ and by the above corollary there is no
diffusion consistent with $v$.
\end{example}

\begin{remark}
More general but less succinct sufficient conditions for uniqueness can 
be deduced from Lemma~\ref{l:extendunique} or Lemma~\ref{l:extendR}. 
For 
example, 
if $0<\xminus<\xplus=\infty$, but $(v^g)'(\xminus)= (1 - 
e^{-v^g(\xminus)})/\xminus$ then there 
is 
at most one $X \in \sX_0$ which is consistent with $V$.
\end{remark}

\section{Further examples and remarks}

\subsection{Birth-Death processes}
\label{S:bd}

We now return to $\sX_0$ and consider the case when $E$ is made up of
isolated points only; whence
$X$ is a birth-death process on points $x_n \in E$ indexed by $n \in
\N_0$,
with associated exponential holding times $\lambda_n$.
We assume $x_0=0$, $x_n$ is increasing, and write $x_\infty = \lim_n
x_n$.

For a birth-death process the transition probabilities are given by
\begin{eqnarray*} 
\Prob_{n,n+1}(t)=p_n \lambda_n t + o(t), \\
\Prob_{n,n-1}(t)=q_n \lambda_n t + o(t),
\end{eqnarray*}
where of course $q_n=1-p_n$, with $p_0=1$.
By our assumption that, away from zero, $(X_t)_{t \geq 0}$ is a martingale,
we must have $p_n=\frac{x_{n+1}-x_n}{x_{n+1}-x_{n-1}}$. Then we can
write
$x_n=x_{n-1}+\frac{\prod_{i=1}^{n-1} q_i}{\prod_{i=0}^{n-1}
p_i}$.
Let \[m(x_n)=\frac{1}{\lambda_n} \frac{p_0
p_1 ... p_{n-1}}{q_1 q_2 ... q_n} .\]
Then it is easy to verify, but see
\cite{feller}, that (\ref{eq:differential}) can be expressed in terms of
a second-order difference operator
\begin{equation} \label{eq:birthdeathdiff}
\frac{1}{m(x_n)}\left[\frac{f(x_{n+1})-f(x_{n})}{x_{n+1}-x_n}
- \frac{f(x_{n})-f(x_{n-1})}{x_{n}-x_{n-1}} \right] - \rho f(x_n)=0,
\end{equation}
with boundary conditions $f(0)=1$ and $f'(0-)=0$.

Let $M(x)=\sum_{x_n < x} m(x)$. In the language of strings, the pair
$(M,[0,x_\infty))$ is known as the Stieltjes String. If $x_{\infty} +
M(x_{\infty}) < \infty$ the string is called regular and $x_{\infty}$ is
a regular boundary point, while otherwise the string is called singular,
in which case we assume that $x_{\infty}$ is natural (see
Kac~\cite{kac}).

In  this section we consider the call option payoff,
$G(x,\theta)=(x-\theta)^+$ defined for $\theta \in \Theta =
[\theta_0,\infty)$. This objective function is straight-forward to
analyse
since the $g$-duality corresponds to straight lines in the original
coordinates.
It follows that for the forward problem $V$ is decreasing and convex in
$\theta$. $V$ is easily seen to be piecewise linear.

Our focus is on the inverse problem. Note that the solution of this 
problem involves finding the space $E$ and the jump rates $\lambda_n$. 
Suppose that $V$ is decreasing, convex and piecewise linear. Let 
$(\theta_n)_{n \in \N_0}$ be a sequence of increasing real valued 
parameters with $\theta_0 < 0$ and $\theta_n$ increasing to infinity, 
and suppose that $V$ has negative slope $s_i$ on each interval 
$(\theta_i,\theta_{i+1})$. Then $s_i$ is increasing in $i$ and
\begin{equation} \label{eq:bdV}
V(\theta)=V(\theta_n) + (\theta-\theta_n) s_n \hspace{10mm} \mbox{for $
\theta_n \leq \theta < \theta_{n+1}$}.
\end{equation}
We assume that
$s_0=\frac{\theta_0}{V(\theta_0)} < 0$.

Since $V$ is convex, $v$ is $\log((x-\theta)^+)$ convex. Let
$\varphi(x)=\exp(v^g(x))$.
By Proposition \ref{p:prop1}, for $\theta \in [\theta_n,\theta_{n+1})$
\[
\frac{-1}{\xstar(\theta)-\theta} =
g_\theta(\xstar(\theta),\theta)
=\frac{s_n}{V(\theta_n)+(\theta-\theta_n)s_n}
\]
so that
$x_n := \xstar(\theta_n)= \theta_n-V(\theta_n)/s_n$.
Note that $\xstar(\theta)$ is constant on $[\theta_n,\theta_{n+1})$.
We find that for $\theta \in [\theta_n,\theta_{n+1})$
\[
\psi(\xstar(\theta)) = \log(\theta_n-\theta-V(\theta_n)/s_n)-v(\theta),
\]
and hence $\varphi(\xstar(\theta)) = \frac{-1}{s_n}$.
Then, for $x \in [\xstar(\theta_n),\xstar(\theta_{n+1}))$,
\begin{equation}
\psi(x)=\log(x-\theta_n)-v(\theta_n) .
\end{equation}

We proceed by determining the {\it $Q$}-matrix for the process on
$[\xstar(0)=0,\xi)$. For each $n$, let $p_n$
denote the probability of jumping to state ${x_{n+1}}$ and $q_n$ the
probability of jumping to ${x_{n-1}}$. Then $p_n$ and $q_n$ are
determined by the martingale property (and $p_0=1$).
Further $\lambda_n$
is determined either through (\ref{eq:birthdeathdiff}) or from a
standard
recurrence relation for first
hitting
times of birth-death processes:
\begin{equation*}
\lambda_n=\frac{\rho \varphi(x_n)}
{p_n \varphi(x_{n+1}) + (1-p_n) \varphi(x_{n-1})-\varphi(x_n)},
\hspace{15mm} n \geq 1.
\end{equation*}

\begin{example} \label{eg:bd}
Suppose that $\theta_n = n + 2^{-n}-2$ so that $\theta_0=-1$, and
$V(\theta_n) = 2^{-n}$. It follows that $s_n = -(2^{n+1}-1)^{-1}$.
We find $x_n = n$ (this example has been crafted to ensure
that the birth-death process has the integers as state space, and this 
is not a general result).
Also $\varphi(n) = 2^{n+1} - 1$ ($\varphi$ is piecewise
linear with kinks at the integers) and the holding time at $x_n$ is 
exponential with rate
$\lambda_n =4 \rho ( 1 -
2^{-(n+1)})$.
\end{example}

\subsection{Subsets of $\sX$ and uniqueness}
\label{ss:otherX}

So far in this article we have concentrated on the class $\sX_0$. 
However, the
methods and ideas translate to other classes of diffusions.

Let $\sX_{m,s}$ denote the set of all diffusions
reflected at $0$. Here $m$ denotes the speed measure, and $s$ the
scale function. With the
boundary conditions as in (\ref{eq:differential}), $\varphi(x) \equiv
\varphi_X(x)$ is the increasing, but not necessarily convex solution to
\begin{equation} \label{eq:differentialsc}
\frac{1}{2} \frac{d^2 f}{dm ds} = \rho f .
\end{equation}

In the smooth case, when $m$ has a density $m(dx)= \nu(x)dx$ and $s''$
is continuous,
(\ref{eq:differentialsc}) is equivalent to
\begin{equation} \label{eq:differentialsnice}
\frac{1}{2} \sigma^2(x) f''(x) + \mu(x) f'(x) = \rho f(x),
\end{equation}
where \[ \nu (x)=  \sigma^{-2}(x) e^{M(x)}, \ \ s'(x)=e^{-M(x)}, \ \
M(x)=\int_{0-}^x 2 \sigma^{-2} (z) \mu(z) dz, \]
see \cite{borodin}.

Now suppose $V \equiv \{ V(\theta) : \theta \in \Theta \}$ is given 
such
that $v^u(0)=0$, $(v^u)'(0)=0$ and $v^u$ is increasing, then we will be
able to find several pairs $(\sigma,\mu)$ such that $\exp(v^u)$ solves
(\ref{eq:differentialsnice}) so that there is a family of diffusions
rather than a unique diffusion in $\sX_{m,s}$ consistent with $v$.

It is only by considering subsets of $\sX_{m,s}$, such as taking
$s(x)=0$ as in the majority of this article, or perhaps by setting
the diffusion co-efficient equal to unity, that we can hope to find a
{\em unique} solution to the inverse problem.

\begin{example} Consider Example $\ref{ex:concave}$ where we found
$\psi(x)=\sqrt{1+x}$. Let $\sX_{1,s}$ be the set of diffusions with
unit
variance and scale function $s$ (which are reflected at 0). Then there
exists a unique diffusion in
$\sX_{1,s}$ consistent with $V$. The drift is given by
\[\mu(x)=\frac{1/4+2 \rho (1+x)^2}{1+x}.\]
\end{example}

\section{Applications to Finance}

\subsection{Applications to Finance}
\label{SS:finance}
Let $\sX_{stock}$ be the set of diffusions with the representation
\[dX_t = (\rho-\delta) X_t dt + \eta(X_t) X_t dW_t.\]
In finance this SDE is often used to model a stock price process, with 
the 
interpretation that $\rho$ is the interest rate, $\delta$ is the 
proportional dividend, and $\eta$ is the level dependent volatility. 
Let
$x_0$ denote the starting level of the diffusion and suppose that $0$ is an
absorbing barrier.

Our aspiration is to recover the underlying model, assumed to be an 
element of $\sX_{stock}$, given a set of perpetual American option 
prices, parameterised by $\theta$. The canonical example is when 
$\theta$ is the strike, and $G(x,\theta)=(\theta-x)^+$, and then,
as discussed in Section~\ref{ss:otherX}, the fundamental ideas 
pass over from $\sX_0$ to $\sX_{stock}$. We suppose $\rho$ and $\delta$ 
are given and aim to recover the volatility $\eta$.

Let $\varphi$ be the convex and decreasing solution to the differential 
equation
\begin{equation} \label{eq:eigenstock}
\frac{1}{2} \eta(x)^2 x^2 f_{xx} + (\rho-\delta) x f_x - \rho f = 0.
\end{equation}
(The fact that we now work with decreasing $\varphi$ does not invalidate 
the method, though it is now appropriate to use payoffs $G$ which are 
monotonic decreasing in $x$.)
Then $\eta$ is determined by the Black-Scholes equation
\begin{equation}
\label{e:BSdef}
\eta(x)^2=2 \frac{ \rho \varphi(x)- (\rho-\delta) x \varphi 
'(x)}{x^2 \varphi ''(x)} . 
\end{equation}

Let $G \equiv G(x,\theta)$ be a family of payoff functions satisfying assumption \ref{ass}.
Under the additional assumption that $G$ is decreasing in $x$ (for example, the put payoff) Lemma
\ref{l:coincide} shows that the optimal stopping problem reduces to 
searching over
first hitting times of levels $x < x_0$.
Suppose that $\{V(\theta) ; \theta \in \Theta\}$ is used to determine a 
smooth, 
convex
$\varphi=\exp(\psi)$ on $[0,\xi)$ via
the $g$-convex duality
\[\psi(x)=v^g(x)=\sup_{\theta \in \Theta} [g(x,\theta)-v(\theta)].\]
Then the inverse problem is solved by the diffusion with 
volatility 
given by the solution of (\ref{e:BSdef})  
above. 
Similarly, given a diffusion $X \in 
\sX_{stock}$
such that $\psi=\log(\varphi)$ is $g$-convex on $[0,\xi)$, then the 
value 
function
for the optimal stopping problem is given exactly as in Proposition 4.4.
See Ekstr\"om and Hobson~\cite{hobson} for more details.

\begin{remark} \label{r:stockirregular} The irregular case of a 
generalised diffusion  
requires the
introduction of a scale function, see Ekstr\"{o}m and Hobson
\cite{hobson}. That article restricts itself to the case of the put/call 
payoffs
for diffusions $\sX_{stock}$, using arguments from regular convex analysis 
to
develop the duality relation. However, the construction of the scale 
function is
independent of the form of the payoff function and is wholly transferable 
to the setting of $g$-convexity used in this paper. \end{remark}

\subsection{The Put-Call Duality; $g$-dual Processes}
\label{SS:aj}

In \cite{alfonsi3}, Alfonsi and Jourdain successfully obtain what they 
term 
an American Put-Call duality (see below) and in doing so, they solve 
forward and inverse problems for diffusions $X \in \sX_{stock}$. 
They consider objective functions corresponding to 
calls and puts: 
$G(x,\theta)=(x-\theta)^+$ 
and $G(x,\theta)=(\theta-x)^+$.
In 
\cite{alfonsi2} the procedure is generalised slightly to payoff functions 
sharing `global properties' with the put/call payoff. In our notation the 
assumptions they 
make are the following: 
\cite[(1.4)]{alfonsi2} \\
Let $G: \R^+ \times \R^+ \rightarrow \R^+$ be a continuous function such 
that on
the space $\Phi = \{ (x,\theta) : G(x,\theta) > 0 \} \neq \emptyset$, $G$ 
is $C^2$
and further for all $x,\theta \in \Phi$
\[ G_x(x,\theta) < 0, \hspace{5mm} G_\theta(x,\theta) > 0,
\hspace{5mm} 
G_{xx}(x,\theta) \leq 0
\hspace{5mm}
G_{\theta \theta}(x,\theta) \leq 0.
\]
Subsequently, they assume 
\cite[(3.4)]{alfonsi2}
\begin{equation} \label{eq:expsm}
G G_{x\theta} > G_x G_\theta \hspace{5mm} \mbox{on $\Phi$.}
\end{equation}
Condition (\ref{eq:expsm}) is precisely the Spence-Mirrlees condition 
applied 
to $g=\log(G)$. Note that unlike Alfonsi and Jourdain~\cite{alfonsi2} we 
make no concavity 
assumptions on G. We also treat the case of the reverse Spence-Mirrlees
condition, and we consider classes of diffusions other than $\sX_{stock}$. Further, 
as in
Ekstr\"{o}m and Hobson~\cite{hobson} we allow for generalised 
diffusions.
This is important if we are to construct solutions to the inverse 
problem when 
the value functions are not smooth. Moreover, even when $v$ is smooth, if 
it contains a $g$-section, the diffusion which is consistent with $v$ 
exists only in the generalised sense. When $v$ contains a $g$-section we 
are able to address the question of uniqueness. Uniqueness is automatic 
under 
the additional monotonicity assumptions of \cite{alfonsi2}. 

In addition to the solution of the inverse problem, a further aim of
Alfonsi and Jourdain~\cite{alfonsi3} is to 
construct dual pairs of models such that call prices (thought of as a 
function of strike) under one model become put prices (thought of as a 
function of the value of the underlying stock) in the other.
This extends a standard approach in optimal control/mathematical finance 
where pricing 
problems are solved via a Hamilton-Jacobi-Bellman equation which derives 
the value of an option for all values of the underlying stock 
simultaneously, even though only one of those values is relevant. 
Conversely, on a given date, options with several strikes may be traded.

The calculations given in \cite{alfonsi3} are most impressive, 
especially 
given the complicated and implicit nature of the results (see for example,
\cite[Theorem 3.2]{alfonsi2}; typically the diffusion 
coefficients are 
only specified up to the solution of one or more differential equations).
In contrast, our results on the inverse problem are often fully 
explicit. We find progress easier since the $g$-convex method does 
not require the solution of an intermediate differential equation. 
Furthermore,
the main duality (\cite[Theorem 3.1]{alfonsi2}) is nothing 
but the 
observation that the $g$-subdifferentials (respectively $\xstar$ and 
$\thstar$) are inverses of each other, which is immediate 
in the generalised convexity framework developed in this
article. Coupled with the identity,
$g_x(\xstar(\theta),\theta)=\psi'(\xstar(\theta))$ 
the main duality result of \cite{alfonsi3}
follows after algebraic manipulations, at least in
the smooth case 
where $\xstar$ and $\thstar$ are strictly increasing and differentiable, 
which is the setting of \cite{alfonsi3} and \cite{alfonsi2}.

\appendix
\section{Proofs}

\subsection{The Forward and Inverse Problems}

\noindent{\bf Proof of Lemma~\ref{l:coincide}}
Clearly $V \geq \hat{V}$, since the supremum over first hitting times 
must be
less than or equal to the supremum over all stopping times.

Conversely, by (\ref{hatv}), $\varphi(x) \geq
\frac{G(x,\theta)}{\hat{V}(\theta)}$. Moreover, (\ref{eq:differential}) 
implies that
$e^{- \rho t} \varphi(X_t)$ is a non-negative local martingale and
hence a supermartingale. Thus for stopping times $\tau$ we have
\[1 \geq \E_0[e^{-\rho \tau} \varphi(X_\tau)] \geq \E_0[e^{-\rho \tau}
G(X_{\tau},\theta) / \hat{V}(\theta)] \]
and hence $\hat{V}(\theta) \geq \sup_{\tau} E_0 [e^{-\rho \tau}
G(x_{\tau},\theta)]$.

\subsection{u-convexity}

The methods and many of the results we use in this section
are to be found in \cite{rachev} and \cite{carlier}.

\begin{lemma}
For every function $f: D_z \rightarrow \R$, $(f^u)^u$ is the largest 
$u$-convex minorant of $f$.
\end{lemma}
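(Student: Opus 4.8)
The plan is to avoid any direct manipulation of the defining set $S$ and instead reduce the statement to three elementary, purely order-theoretic facts about the dual operation $\cdot^u$. The three ingredients I would isolate first are: (i) the Young inequality $f(z) + f^u(y) \geq u(y,z)$, which is immediate from the definition of $f^u$; (ii) the fact that $\cdot^u$ is order-reversing, that is $h_1 \leq h_2$ forces $h_1^u \geq h_2^u$, which is again immediate since enlarging $h$ only decreases $u(y,z)-h(\cdot)$ pointwise before taking the supremum; and (iii) the involution property $(h^u)^u = h$ for $u$-convex $h$, which is precisely Lemma~\ref{def:uconvex2}. Throughout I would keep careful track of the variable: since $f$ is defined on $D_z$, its dual $f^u$ lives on $D_y$ and $(f^u)^u$ lives back on $D_z$, so that comparing $(f^u)^u$ with $f$ is meaningful.

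First I would check that $(f^u)^u$ is a minorant of $f$. From $f^u(y) = \sup_{z}[u(y,z)-f(z)]$ we get $f(z) \geq u(y,z) - f^u(y)$ for every $y$ and $z$, and taking the supremum over $y \in D_y$ yields $f(z) \geq (f^u)^u(z)$. Next I would verify that $(f^u)^u$ is itself $u$-convex. Applying the minorant property to $f$ gives $(f^u)^u \leq f$, and antitonicity then gives $((f^u)^u)^u \geq f^u$; on the other hand the minorant property applied to the function $f^u$ gives $((f^u)^u)^u \leq f^u$. Hence $((f^u)^u)^u = f^u$, and writing $g = (f^u)^u$ this reads $(g^u)^u = g$, so $g$ is $u$-convex by Lemma~\ref{def:uconvex2}. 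Thus $(f^u)^u$ is a $u$-convex minorant of $f$.

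The maximality is the crux, and it falls out cleanly from the same ingredients. Suppose $h$ is any $u$-convex minorant of $f$, so $h \leq f$. Applying antitonicity once gives $h^u \geq f^u$, and applying it a second time gives $(h^u)^u \leq (f^u)^u$. But $h$ is $u$-convex, so $(h^u)^u = h$ by Lemma~\ref{def:uconvex2}, whence $h \leq (f^u)^u$. Therefore every $u$-convex minorant of $f$ is dominated by $(f^u)^u$, which is itself a $u$-convex minorant, and this is exactly the assertion that $(f^u)^u$ is the largest $u$-convex minorant of $f$.

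I do not anticipate a genuine analytic obstacle: each step is a one-line consequence of the definitions or of the already-established Lemma~\ref{def:uconvex2}, and the argument is entirely order-theoretic. The only points requiring care are the bookkeeping of the domains $D_y$ and $D_z$ under the symmetry between the two variables, and handling suprema consistently when infinite values occur; neither presents a real difficulty.
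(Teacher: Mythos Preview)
Your argument is clean and would be the textbook order-theoretic proof \emph{if} Lemma~\ref{def:uconvex2} were already available. In this paper it is not: the appendix proof of Lemma~\ref{def:uconvex2} reads ``If $f$ is $u$-convex then $f$ is its greatest $u$-convex minorant, so $f=(f^u)^u$'', i.e.\ it invokes precisely the lemma you are trying to establish. So citing Lemma~\ref{def:uconvex2} here, as you do both to conclude that $(f^u)^u$ is $u$-convex and, crucially, to get $(h^u)^u=h$ in the maximality step, creates a circular dependency.

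The paper avoids this by never appealing to the full involution. It defines $f^m$ as the supremum of all $u$-convex minorants, writes $f^m(y)=\sup_{(z,a)\in S'}[u(y,z)+a]$ via Definition~\ref{def:uconvex1}, and checks by hand that $(l^u)^u=l$ for the elementary functions $l(y)=u(y,z)+a$; antitonicity of $\cdot^u$ (exactly your ingredient (ii)) then pushes each such $l$ below $(f^u)^u$, giving $f^m\leq (f^u)^u$. Your proof can be repaired along the same lines: prove directly from Definition~\ref{def:uconvex1} that any $u$-convex $h$ satisfies $(h^u)^u\geq h$ (for $(z,a)\in S$ one has $a\leq -h^u(z)$, hence $u(y,z)+a\leq (h^u)^u(y)$; take the supremum over $S$), which together with $(h^u)^u\leq h$ yields the involution you need without circularity. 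Incidentally, your Step~2 detour is unnecessary anyway: $(f^u)^u(z)=\sup_{y}[u(y,z)-f^u(y)]$ is already of the form in Definition~\ref{def:uconvex1}, so $(f^u)^u$ is $u$-convex on sight.
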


\begin{proof}

Note that $f^* \equiv -\infty$ is always a $u$-convex minorant and by 
Definition \ref{def:uconvex1} a function defined as the supremum of a 
collection of $u$-convex functions is $u$-convex.

Hence we can define $f^m$, the largest $u$-convex minorant of $f$ by
\[f^m(z)= \sup_\zeta \{ \zeta(x) ; \zeta \leq f, \zeta \in U(D_z) \}, \]
where $U(D_z)$ is the set of $u$-convex functions on $D_z$.
Then by the Fenchel Inequality, $f \geq (f^u)^u$ and since $(f^u)^u$ is 
$u$-convex, it follows that $f^m \geq (f^u)^u$. 

Since $f^m$ is $u$-convex there exists $S' \subset D_z \times \R$ such 
that 
\[f^m(y)=\sup_{(z,a)\in S'} [u(y,z)+a]\]
For fixed $(z,a) \in S'$, $l(y)=u(y,z)+a$ is $u$-convex and it is 
easy to check that $(l^u)^u(y) = l(y)$.

We note that if $f \leq \hat{f}$, then $f^u \geq \hat{f}^u$. Since 
$u(y,z)+a 
\leq f^m(y) \leq f(y)$, we therefore have $u(y,z)+a \leq 
(f^u)^u(y)$. Taking the supremum over pairs $(z,a) \in S'$, we get 
$f^m \leq (f^u)^u$.
\end{proof}

\noindent{\bf Proof of Lemma~\ref{def:uconvex2}}
If $f$ is $u$-convex then $f$ is its greatest $u$-convex minorant, so 
$f=(f^u)^u$. On the other hand if $(f^u)^u = f$ then since $(f^u)^u$ 
is $u$-convex, so is $f$.

\noindent{\bf Proof of Lemma~\ref{lem:monsubdiff}}
Let $y<\hat{y}$. For any $z \in \partial^u f(y)$ and any 
$\hat{z} \in \partial^u f(\hat{y})$, the following pair 
of inequalities 
follow 
from $u$-convexity (see especially the second part of 
Definition~\ref{def:subdifferential}):
\begin{eqnarray*}
f(y)-f(\hat{y}) & \geq & u(y,\hat{z}) - u(\hat{y},\hat{z}),  \\
f(\hat{y})-f(y) & \geq & u(\hat{y},z) - u(y,z).
\end{eqnarray*}
Adding and re-arranging we get,
\[u(y,\hat{z}) - u(\hat{y},\hat{z}) + u(\hat{y},z) - u(y,z) \leq 0, \]
and hence,
\begin{equation} \label{eq:increasingineq}
\int_y^{\hat{y}} [u_y(v,{z})-u_y(v,\hat{z})] dv \leq 0.
\end{equation}
Since (by Assumption~\ref{ass}(b)) $u_y(v,w)$ is strictly increasing in 
the second argument
it follows that $\hat{z} \geq z$.

\noindent{\bf Proof of Proposition \ref{p:prop1}}
Suppose $f$ is $u$-convex and $u$-subdifferentiable. Let $y$ be a point 
of differentiability of $f$ and $z \in \partial^u f(y)$. Then for $h>0$, 
we 
have both $f(y+h) - f(y) = f'(y) h + o(h)$ and
\[ f(\bar{y})- f(y) \geq u(\bar{y},z)-u(y,z) \hspace{20mm} \forall 
\bar{y} \in D_y. \]
From the definition of $u$-convexity $f(y) = u(y,z)-f^u(z)$ and $f(y+h) 
\geq u(y+h,z)-f^u(z)$, so taking $\bar{y}=y+h$ and putting all this 
together,
\[ f(y+h)-f(y) \geq u(y+h,z)-u(y,z) = u_y(y,z) + o(h). \]

Dividing by $h>0$ on both sides and letting $h \rightarrow 0$ we get
 \[ f '(y) \geq u_y(y,z) .\] 
If instead we take $h<0$ above then we 
get the reverse inequality and hence 
\begin{equation}
f '(y)=u_y(y,z).
\end{equation}

By the Spence-Mirrlees condition $u_y(y,z)$ is injective with 
respect to $z$. Hence, the last 
equation determines $z$ uniquely; thus whenever $f$ is differentiable 
the sub-differential $\partial^u f(y)$ is a singleton set.
We set $\partial^u f(y)= \{\zstar(y)\}$. 
Then, since $\zstar(y) \in \partial^u f(y)$ we have
$f(y)+ f^{u}(\zstar(y)) = u(y, \zstar(y))$ as required.

To prove the converse statement, 
suppose that for any point of differentiability $y$ of $f$, 
\[ f'(y)=u_y(y,\zstar(y))\] 
where $\zstar$ non-decreasing.

Define 
\[ \zeta(y)=\sup_v [f(v)+u(y,\zstar(v))-u(v,\zstar(v))] .\]

Then $\zeta$ is $u$-convex: to see this define  
\[S=\{(z,a); \exists w \in D_y \ | \ z=\zstar(w), \ 
a=f(w)-u(w,\zstar(w))\} \]
and then 
\[\zeta(y)=\sup_{(\theta,a) \in S} [u(y,\theta)+a]. \]

Clearly $\zeta \geq f$. We want to show that $\zeta = f$.
By the assumptions on $f$ we have 
\[ f(y)= \int_{v}^y u_y(w,\zstar(w))dw + f(v). \]
Hence 
\[ f(y)-(f(v)+u(y,\zstar(v))-u(v,\zstar(v)))= \int_v^y
[u_y(w,\zstar(w))-u_y(w,\zstar(w))] dw \geq 0.
\] 
Thus $\zeta =f$ and so $f$ is $u$-convex.
 
\vspace{5mm}

The following Corollary is immediate from \ref{p:prop1}.

\begin{corollary} Suppose $f$ is differentiable at $y$. Then $f$ is 
twice-differentiable at $y$ if and only if $\zstar$ is differentiable 
at $y$.
\end{corollary}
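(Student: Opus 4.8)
The plan is to differentiate the first-order identity supplied by Proposition~\ref{p:prop1}. At every point of differentiability $y$ that result gives $f'(y) = u_y(y, \zstar(y))$, and the corollary will follow by reading this relation in both directions. I will use Assumption~\ref{ass} throughout: part (a) makes $u_y(y,z)$ itself continuously differentiable in each variable, and part (b), the Spence-Mirrlees condition, makes $z \mapsto u_y(y,z)$ strictly increasing and hence invertible for each fixed $y$.

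For the implication from differentiability of $\zstar$ to twice differentiability of $f$, I would simply apply the chain rule to $f'(y) = u_y(y, \zstar(y))$. The right-hand side is the $C^1$ map $(y,z) \mapsto u_y(y,z)$ composed with $y \mapsto (y, \zstar(y))$, which is differentiable at $y$ by hypothesis; hence $f'$ is differentiable at $y$, with
\[ f''(y) = u_{yy}(y, \zstar(y)) + u_{yz}(y, \zstar(y))\,(\zstar)'(y), \]
so that $f$ is twice differentiable there.

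For the converse I would invert the relation in the $z$-variable. Since $z \mapsto u_y(y,z)$ is a strictly increasing $C^1$ bijection onto its range, let $\eta_y$ denote its inverse; then $\zstar(y) = \eta_y(f'(y))$. Applying the implicit function theorem to $\Phi(y,z) := u_y(y,z) - f'(y)$ at the root $z = \zstar(y)$ produces a differentiable local solution branch, and because $f'$ is differentiable at $y$, which is precisely twice differentiability of $f$, differentiability of $\zstar$ at $y$ follows.

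The main obstacle is the regularity needed for this inversion: the implicit function theorem requires $\Phi_z = u_{yz} \neq 0$ at the point, whereas Assumption~\ref{ass}(b) only guarantees strict monotonicity of $u_y$ in $z$, which a priori allows $u_{yz}(y,\zstar(y)) = 0$. I would resolve this either by reading the single-crossing condition as the strict inequality $u_{yz} > 0$, the natural interpretation satisfied in all the examples, or, to retain the weaker hypothesis, by feeding the continuity of $\zstar$ back into the difference-quotient expansion of $f'$: at a degenerate point the $u_{yz}$-term drops out, so the equivalence of the second-order behaviour of $f$ with the first-order behaviour of $\zstar$ persists through a limiting argument.
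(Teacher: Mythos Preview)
Your approach is exactly what the paper has in mind: the paper offers no argument beyond declaring the result ``immediate from Proposition~\ref{p:prop1}'', and you have correctly unpacked that as differentiating the identity $f'(y)=u_y(y,\zstar(y))$ in one direction and inverting it in the other. The chain-rule direction and the implicit-function-theorem direction are both sound when $u_{yz}(y,\zstar(y))\neq 0$.

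You are also right to flag the degeneracy $u_{yz}=0$: Assumption~\ref{ass}(b) only gives strict monotonicity, which allows $u_{yz}$ to vanish at isolated points, and the paper's one-line proof simply ignores this. Of your two proposed fixes, the first (read the Spence--Mirrlees condition as the strict inequality $u_{yz}>0$) is the honest repair. The second, the ``limiting argument'' in which the $u_{yz}$-term drops out, does not work and in fact the corollary is \emph{false} at such points under the stated hypotheses. Take $u(y,z)=\tfrac{1}{3}(y^3z+yz^3)$, so that $u_{yz}=y^2+z^2$ satisfies Assumption~\ref{ass} but vanishes at the origin, and set $\zstar(y)=y^{1/3}$. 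Then
\[
f'(y)=u_y\bigl(y,\zstar(y)\bigr)=y^2\cdot y^{1/3}+\tfrac{1}{3}(y^{1/3})^3=y^{7/3}+\tfrac{y}{3},
\]
which is differentiable at $0$ with $f''(0)=\tfrac{1}{3}$, yet $\zstar$ is not differentiable there. So drop the second route and, if you want a complete statement, either strengthen the hypothesis to $u_{yz}>0$ or add it as a local condition at the point $(y,\zstar(y))$.
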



\noindent{\bf Proof of Proposition \ref{p:prop2}}

Suppose $f$ is $u$-subdifferentiable in a neighbourhood of $y$. Then 
for small enough $\epsilon$,
\[f(y+\epsilon)-f(y) \geq f(y+\e,z^*(y))- f(y,z^*(y)) \]
and $\lim_{\epsilon \downarrow 0} \{ f(y+\epsilon)-f(y) \}/\epsilon 
\geq f_y(y, z^*(y))$.

For the reverse inequality, if $ z^*$ is continuous at $y$ then for 
$\e$ small enough 
that $z^*(y + \e)<z^*(y)+\delta$ we have
\[ f(y+\epsilon)-f(y) \leq f(y+\e,z^*(y+\e))- f(y,z^*(y+\e)) \leq
f(y+\e,z^*(y)+\delta) - f(y,z^*(y)+\delta) \]
and $\lim_{\epsilon \downarrow 0} \{ f(y+\epsilon)-f(y) \}/\epsilon 
\leq \lim_{\delta \downarrow 0} f_y(y, z^*(y)+\delta) = f_y(y, 
z^*(y))$. 

Inequalities for the left-derivative follow similarly, and then 
$f'(y) = u_y(y,z^*(y))$ which is continuous.

Conversely, if $\partial^u f$ is multi-valued at $y$ so that $z^*$ is 
discontinuous at $y$, then
\[ \lim_{\epsilon \downarrow 0} \{ f(y+\epsilon)-f(y) \}/\epsilon 
\geq f_y(y, z^*(y)+) > f_y(y, z^*(y)-) \geq \lim_{\epsilon \downarrow 
0} \{ f(y)-f(y-\e) \}/\epsilon \]
where the strict middle inequality follows immediately from 
Assumption~\ref{ass}.

\bibliography{biblio}
\end{document}